\newcommand\boxit[1]{%
    \par \noindent \fbox{\begin{minipage}{\textwidth}%
    #1\vfill\end{minipage}} }
\renewcommand{\boxit}[1]{#1}
\newtheorem{theorem}{Theorem}[section]
\newtheorem{lemma}[theorem]{Lemma}
\newtheorem{definition}{Definition}[section]
\newtheorem{observe}{Observation}[section]
\newtheorem{remark1}[observe]{Remark}
\newenvironment{observation}{\begin{observe} \rm}{\end{observe}}
\newenvironment{remark}{\begin{remark1} \rm}{\end{remark1}}
\def\qed{\hfill$\blacksquare$\\} \renewenvironment{proof}{\noindent {\bf 
Proof.}}{\qed}
\newcommand{\overbar}[1]{\mkern 1.5mu\overline{\mkern-1.5mu%
    #1\mkern-1.5mu}\mkern 1.5mu}
\newcommand{\sgn}{\mathop{\mathrm{sgn}}}
\newcommand{\inner}[2]{\ensuremath{\langle #1,#2 \rangle}}
\def\R{\mathbbm{R}}
\def\1{\mathbbm{1}}
\title{Zernike Polynomials: Evaluation, Quadrature, and Interpolation}
\author{Philip Greengard, Kirill Serkh}
\begin{document}

%
%

\begin{center}
   \begin{minipage}[t]{6.0in}

Zernike polynomials are a basis of orthogonal polynomials 
on the unit disk that are a natural basis for representing 
smooth functions. They arise in a number of applications
including optics and atmospheric sciences. 
In this paper, we provide a self-contained reference on Zernike 
polynomials, algorithms for evaluating them, and 
what appear to be new numerical schemes for 
quadrature and interpolation. We also introduce new properties of 
Zernike polynomials in higher dimensions.
The quadrature rule and interpolation scheme use a tensor
product of equispaced nodes in the angular direction 
and roots of certain Jacobi polynomials in the radial 
direction. An algorithm for finding the roots of these Jacobi 
polynomials is also described.
The performance of the interpolation and 
quadrature schemes is illustrated through numerical experiments.
Discussions of higher dimensional Zernike polynomials are
included in appendices.

\thispagestyle{empty}

  \vspace{ -100.0in}

  \end{minipage}

\end{center}

\vspace{ 2.60in}

\begin{center}

   \begin{minipage}[t]{4.4in}

\begin{center}

{\bf  Zernike Polynomials: Evaluation, Quadrature, and Interpolation} \\

  \vspace{ 0.50in}

Philip Greengard$^{\dagger\, \star}$ and Kirill Serkh$\mbox{}^{\ddagger \, \diamond}$

\end{center}

  \vspace{ -100.0in}

  \end{minipage}

           \end{center}

 \vspace{ 2.00in}


\vfill

\noindent 
$^{\star}$ This author's work  was supported in
part under ONR N00014-14-1-0797, AFOSR FA9550-16-0175, and NIH 1R01HG008383. \\
\noindent 
$\mbox{}^{\diamond}$  This author's work  was supported in part by the NSF
Mathematical Sciences Postdoctoral Research Fellowship (award no.~1606262)
and AFOSR FA9550-16-1-0175.
\\

\vspace{2mm}
\vspace{2mm}

\noindent
$\mbox{}^{\dagger}$ Dept.~of Mathematics, Yale University, New Haven, CT 06511 \\
\noindent
$\mbox{}^{\ddagger}$ Courant Institute of Mathematical Sciences,
New York University, New York, NY 10012

\vspace{2mm}

\noindent
{\bf Keywords:}
{\it Zernike polynomials, Orthogonal polynomials, Radial basis functions, 
Approximation on a disc}

\vfill
\eject


\newpage

\section{Introduction}
Zernike polynomials are a family of orthogonal polynomials
that are a natural basis for the approximation of smooth 
functions on the unit disk. Among other applications, 
they are widely used in optics and atmospheric sciences and 
are the natural basis for representing Generalized Prolate 
Spheroidal Functions (see \cite{slepian}).

In this report, we provide a self-contained
reference on Zernike polynomials, including tables of 
properties, an algorithm for their evaluation, 
and what appear to be new numerical schemes
for quadrature and interpolation. 
We also introduce properties
of Zernike polynomials in higher dimensions and
several classes of numerical algorithms for Zernike 
polynomial discretization in $\R^n$.
The quadrature and interpolation 
schemes provided use a tensor product of equispaced nodes 
in the angular direction and roots of certain Jacobi 
polynomials in the radial direction. An 
algorithm for the evaluation of these roots is also introduced.

The structure of this paper is as follows. 
In Section \ref{secmprem}
we introduce several technical lemmas and provide 
basic mathematical background that will be used in subsequent
sections. 
In Section \ref{secnumev} we provide a recurrence relation 
for the evaluation of Zernike polynomials.
Section \ref{seczernquad} describes a scheme for integrating 
Zernike polynomials over the unit disk.
Section \ref{secapprox} contains an algorithm for the interpolation of 
Zernike polynomials.
In Section \ref{secnumres} we give results of numerical experiments 
with the quadrature and interpolation schemes introduced in 
the preceding sections. 
In Appendix A, we describe properies of Zernike polynomials in 
$\R^n$.
Appendix B contains a description of an algorithm for the evaluation of 
Zernike polynomials in $\R^n$.
Appendix C includes an description of Spherical Harmonics in 
higher dimensions. 
In Appendix D, an overview is provided of the family of Jacobi
polynomials whose roots are used in numerical algorithms 
for high-dimensional Zernike polynomial discretization. Appendix
D also includes a description of an algorithm for computing 
their roots. 
Appendix E contains notational conventions for Zernike polynomials. 
\section{Mathematical Preliminaries}\label{secmprem}
In this section, we introduce notation and several technical
lemmas that will be used in subsequent sections.

For notational convenience and ease of generalizing to 
higher dimensions, we will be denoting by $S_N^\ell(\theta): 
\R \rightarrow \R$, the function defined by the formula
\begin{equation}\label{20}
S_N^\ell(\theta) = \left\{
  \begin{array}{ll}
  (2\pi)^{-1/2} & \mbox{if $N = 0$}, \\
  \sin(N\theta)/\sqrt{\pi} & \mbox{if $\ell = 0$, $N>0$}, \\
  \cos(N\theta)/\sqrt{\pi} & \mbox{if $\ell = 1$, $N>0$}.
  \end{array}
\right.
\end{equation}
where $\ell \in\{0,1\}$, and $N$ is a non-negative integer.
In accordance with standard practice, we will denoting by 
$\delta_{i,j}$ the function defined by the formula
  \begin{align}\label{40}
\delta_{i,j} = \left\{
  \begin{array}{ll}
  1 & \mbox{if $i = j$}, \\
  0 & \mbox{if $i \ne j$}.
  \end{array}
\right.
  \end{align}
The following lemma is a classical fact from elementary calculus.
\begin{lemma}\label{60}
For all $n \in \{1,2,...\}$ and for any integer $k\geq n+1$,
\begin{equation}\label{80}
\frac{1}{k}\sum_{i=1}^k \sin(n\theta_i)=\int_0^{2\pi}
\sin(n\theta)d\theta=0
\end{equation}
and 
\begin{equation}\label{100}
\frac{1}{k}\sum_{i=1}^k \cos(n\theta_i)=\int_0^{2\pi}
\cos(n\theta)d\theta=0
\end{equation}
where
\begin{equation}\label{120}
\theta_i=i\frac{2\pi}{k}
\end{equation}
for $i=1,2,...,k$.
\end{lemma}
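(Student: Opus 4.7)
The plan is to handle the integrals and the finite sums separately, treating the two trigonometric identities as real and imaginary parts of a single complex exponential identity.

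First, the integral equalities are immediate from the fundamental theorem of calculus: antidifferentiating $\sin(n\theta)$ gives $-\cos(n\theta)/n$ and antidifferentiating $\cos(n\theta)$ gives $\sin(n\theta)/n$, each of which has equal values at $\theta=0$ and $\theta=2\pi$ since $n$ is a positive integer. So the real content of the lemma lies entirely in showing that the finite sums vanish.

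For the finite sums, the approach I would take is to package them together as
\begin{equation*}
\sum_{i=1}^{k} \cos(n\theta_i) + \sqrt{-1}\sum_{i=1}^{k} \sin(n\theta_i) = \sum_{i=1}^{k} e^{\sqrt{-1}\, n\theta_i},
\end{equation*}
and then substitute $\theta_i = 2\pi i/k$ to recognize the right-hand side as a finite geometric series with ratio $\omega = e^{2\pi \sqrt{-1}\, n/k}$. The hypothesis $k \geq n+1$ combined with $n \geq 1$ gives $1 \leq n \leq k-1$, which is exactly the condition needed to ensure $\omega \neq 1$ while still having $\omega^k = e^{2\pi \sqrt{-1}\, n} = 1$. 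With $\omega \neq 1$, the closed form
\begin{equation*}
\sum_{i=1}^{k} \omega^i = \omega \cdot \frac{\omega^k - 1}{\omega - 1}
\end{equation*}
applies, and the factor $\omega^k - 1$ in the numerator is zero, so the whole sum vanishes. Separating real and imaginary parts then yields both desired identities simultaneously.

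There is no real obstacle here; the only subtlety worth noting is verifying that the hypothesis $k \geq n+1$ is exactly what prevents the degenerate case $\omega = 1$ (which would otherwise make the geometric series formula inapplicable and in fact produce a nonzero sum equal to $k$). If one wished to avoid complex numbers entirely, an alternative is to use the product-to-sum identity $2\sin(\theta_i/2)\cdot\sin(n\theta_i) = \cos((n-\tfrac{1}{2})\theta_i) - \cos((n+\tfrac{1}{2})\theta_i)$ and observe the resulting telescoping when summed over $i=1,\dots,k$, but the complex exponential route is cleaner and handles both sums at once.
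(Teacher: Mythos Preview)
Your proof is correct. The paper does not actually prove this lemma at all; it simply states it with the sentence ``The following lemma is a classical fact from elementary calculus'' and moves on. Your argument via the complex geometric series is the standard one, and your observation that the hypothesis $k\ge n+1$ (together with $n\ge 1$) is precisely what forces $\omega=e^{2\pi\sqrt{-1}\,n/k}\ne 1$ while keeping $\omega^k=1$ is exactly the point.
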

The following technical lemma will be used in Section 
\ref{seczernquad}.
\begin{lemma}\label{140}
For all $m\in \{0,1,2,...\}$, 
the set of all points $(N,n,\ell) \in \R^3$ such that 
$\ell\in \{0,1\}$, $N,n$ are non-negative integers,
and $N+2n \leq 2m-1$ contains exactly $2m^2+2m$ elements.
\end{lemma}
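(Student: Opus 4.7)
The plan is a direct enumeration. Since the condition $N+2n\leq 2m-1$ does not involve $\ell$, and $\ell$ ranges over the two-element set $\{0,1\}$, the total count factors as $2$ times the number of pairs $(N,n)$ of nonnegative integers satisfying $N+2n\leq 2m-1$. So it suffices to show that the number of such pairs equals $m^2+m$.

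To count the pairs, I would fix $n$ and count the allowed values of $N$. Since $N\geq 0$ and $2n\leq N+2n\leq 2m-1$, the parameter $n$ must satisfy $0\leq n\leq m-1$ (using that $n$ is an integer and $2n\leq 2m-1$ forces $n\leq m-1$). For each such $n$, the condition becomes $0\leq N\leq 2m-1-2n$, giving exactly $2m-2n$ admissible values of $N$. Summing,
\begin{equation*}
\sum_{n=0}^{m-1}(2m-2n) \;=\; 2\sum_{j=1}^{m} j \;=\; 2\cdot\frac{m(m+1)}{2} \;=\; m^2+m,
\end{equation*}
after the reindexing $j=m-n$. Multiplying by $2$ for the two choices of $\ell$ yields $2m^2+2m$, as claimed.

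There is essentially no obstacle here; the only subtlety is the edge bookkeeping, namely verifying that $n$ truly runs only up to $m-1$ (the value $n=m$ is excluded because it would require $N\leq -1$), and observing that the count is done set-theoretically on triples $(N,n,\ell)\in\R^3$ so the two values of $\ell$ are always counted separately, even in the boundary case $N=0$ where they happen to produce the same function $S_N^\ell$ in \eqref{20}. One could equivalently prove the statement by induction on $m$: the base case $m=0$ is vacuous (empty set, $0$ elements), and passing from $m$ to $m+1$ adjoins precisely those triples with $2m\leq N+2n\leq 2m+1$, namely $(N,n,\ell)$ with $n\in\{0,\ldots,m\}$, $N\in\{2m-2n,2m-2n+1\}$, and $\ell\in\{0,1\}$, contributing $2\cdot 2\cdot(m+1)=4m+4$ new elements, which matches $2(m+1)^2+2(m+1)-(2m^2+2m)=4m+4$.
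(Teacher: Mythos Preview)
Your proof is correct and follows essentially the same approach as the paper: reduce to counting pairs $(N,n)$ with $N+2n\le 2m-1$, show there are $m^2+m$ of them, and double for $\ell$. The paper's own proof merely asserts the count $m^2+m$ without carrying out the summation, so your argument is in fact more detailed than what appears there.
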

\begin{proof}
Lemma \ref{140} follows immediately from the 
fact that the set of all pairs of non-negative integers
$(N,n)$ satisfying $N+2n \leq 2m-1$ has $m^2+m$ elements
where $m$ is a non-negative integer.
\end{proof}
The following is a classical fact from elementary functional 
analysis. A proof can be found in, for example, \cite{stoer}.
\begin{lemma}\label{130}
Let $f_1,...,f_{2n-1}:[a,b]\rightarrow \R$ be a set of 
orthonormal functions such that for all $k\in \{1,2,...,2n-1\}$, 
\begin{equation}
\int_a^b f_k(x) dx = \sum_{i=1}^n f_k(x_i) \omega_i dx
\end{equation}
where $x_i \in [a,b]$ and $\omega_i \in \R$. 
Let $\phi:[a,b]\rightarrow \R$ be defined by the formula
\begin{equation}
\phi(x)=a_1f_1(x)+...+a_{n-1}f_{n-1}(x).
\end{equation}
Then,
\begin{equation}
a_k=\int_a^b \phi(x) f_k(x) dx=\sum_{i=1}^n \phi(x_i)f_k(x_i)\omega_i.
\end{equation}
for all $k\in \{1,2,...,n-1\}$.
\end{lemma}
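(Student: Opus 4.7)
The plan is to prove the two claimed equalities separately, since they assert different things: one is a purely analytic identity about orthonormal expansions, and the other asserts exactness of a quadrature rule applied to a particular integrand.

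For the first equality $a_k = \int_a^b \phi(x) f_k(x)\, dx$, I would simply expand $\phi$ by linearity and use orthonormality. That is,
\begin{equation*}
\int_a^b \phi(x) f_k(x)\, dx
= \sum_{j=1}^{n-1} a_j \int_a^b f_j(x) f_k(x)\, dx
= \sum_{j=1}^{n-1} a_j \delta_{j,k}
= a_k,
\end{equation*}
for $k\in\{1,\ldots,n-1\}$, using the $\delta_{i,j}$ notation from \eqref{40}. This step is routine.

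For the second equality, which asserts that the quadrature rule gives $a_k$ exactly, the plan is to reduce to the hypothesized exactness on each $f_\ell$. Writing
\begin{equation*}
\phi(x) f_k(x) = \sum_{j=1}^{n-1} a_j\, f_j(x) f_k(x),
\end{equation*}
it suffices to show that the quadrature rule is exact on each product $f_j f_k$ with $j,k \in \{1,\ldots,n-1\}$. The key structural input I would invoke is that in the intended setting (as in \cite{stoer}) the $f_\ell$ are orthonormal polynomials with $\deg f_\ell = \ell - 1$, so $\deg(f_j f_k) \le 2n-4$ and hence $f_j f_k$ lies in the linear span of $f_1,\ldots,f_{2n-1}$. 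Expanding $f_j f_k = \sum_{\ell=1}^{2n-1} c_\ell f_\ell$ and using the hypothesis that the quadrature is exact for each $f_\ell$ individually, linearity yields exactness for $f_j f_k$, and then a second application of linearity gives exactness for $\phi\, f_k$. Combining with the first part concludes the proof.

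The main obstacle is precisely the step in which $f_j f_k$ is expanded in the span of $f_1,\ldots,f_{2n-1}$. The lemma as stated does not explicitly assume that this span is closed under pointwise products, so to make the argument rigorous one must either (i) add the implicit hypothesis that the $f_\ell$ are the first $2n-1$ orthonormal polynomials (the standard Gaussian-quadrature context), or (ii) assume directly that $f_j f_k \in \mathrm{span}(f_1,\ldots,f_{2n-1})$ for all $j,k \le n-1$. Once this product-closure assumption is in place, the rest of the proof reduces to linearity and orthonormality, both of which are immediate.
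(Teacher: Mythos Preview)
The paper does not give its own proof of this lemma; it simply states it as a classical fact and refers the reader to \cite{stoer}. So there is no in-paper argument to compare against.

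Your proof is correct and is exactly the standard argument one finds in references like \cite{stoer}: the first equality is immediate from orthonormality, and the second follows once one knows that each product $f_j f_k$ with $j,k\le n-1$ lies in $\mathrm{span}(f_1,\ldots,f_{2n-1})$, by linearity of both the integral and the quadrature sum. You are also right to flag that the lemma, as literally stated, omits the hypothesis that makes this product-closure hold. In the paper's two applications of the lemma (in the proof of Theorem~\ref{thminterp}) the basis functions are trigonometric functions in the angular variable and shifted Jacobi polynomials in the radial variable; in both cases the relevant products do lie in the span of the first $2n-1$ basis elements, so the intended hypothesis is precisely the polynomial (or product-closure) one you supply. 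With that implicit assumption made explicit, your argument is complete.
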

\subsection{Jacobi Polynomials}\label{secjacpol}
In this section, we define Jacobi
polynomials and summarize some of their properties.\\
Jacobi Polynomials, denoted $P_n^{(\alpha,\beta)}$, are orthogonal 
polynomials on the interval $(-1,1)$ with respect to weight function
\begin{equation}
w(x)=(1-x)^{\alpha} (1+x)^{\beta}.
\end{equation}
Specifically, for all non-negative integers $n,m$ with $n\neq m$ and 
real numbers $\alpha,\beta>-1$,
\begin{equation}\label{192}
\int_{-1}^{1}P_n^{(\alpha,\beta)}(x)P_m^{(\alpha,\beta)}(x)
(1-x)^{\alpha} (1+x)^{\beta}dx=0
\end{equation}
The following lemma, provides a stable recurrence relation that can 
be used to evaluate a particular class of Jacobi Polynomials 
(see, for example, \cite{abramowitz}).
\begin{lemma}
For any integer $n\geq1$ and $N\geq0$,
\begin{align}\label{195}
&\hspace*{-5em} P_{n+1}^{(N,0)}(x)=
\frac{(2n+N+1)N^2+(2n+N)(2n+N+1)(2n+N+2)x}
{2(n+1)(n+N+1)(2n+N)}
P_n^{(N,0)}(x) \notag \\
&-\frac{2(n+N)(n)(2n+N+2)}
{2(n+1)(n+N+1)(2n+N)}
P_{n-1}^{(N,0)}(x),
\end{align}
where 
\begin{equation}
P_{0}^{(N,0)}(x)=1
\end{equation}
and 
\begin{equation}
P_{1}^{(N,0)}(x)=\frac{N+(N+2)x}{2}.
\end{equation}
The Jacobi Polynomial $P_n^{(N,0)}$ is defined in (\ref{192}).
\end{lemma}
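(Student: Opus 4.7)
The plan is to derive this recurrence as a direct specialization of the classical three-term recurrence for general Jacobi polynomials $P_n^{(\alpha,\beta)}$. I would begin by recalling the standard form of that recurrence (as given, for example, in Abramowitz--Stegun 22.7.1):
\begin{align*}
& 2(n+1)(n+\alpha+\beta+1)(2n+\alpha+\beta)\,P_{n+1}^{(\alpha,\beta)}(x) \\
&\quad = \bigl[(2n+\alpha+\beta+1)(\alpha^2-\beta^2) + (2n+\alpha+\beta)(2n+\alpha+\beta+1)(2n+\alpha+\beta+2)\,x\bigr]\,P_n^{(\alpha,\beta)}(x) \\
&\quad\quad - 2(n+\alpha)(n+\beta)(2n+\alpha+\beta+2)\,P_{n-1}^{(\alpha,\beta)}(x).
\end{align*}

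Next I would substitute $\alpha = N$ and $\beta = 0$. Under this substitution $\alpha+\beta = N$, so that $2n+\alpha+\beta = 2n+N$; similarly $\alpha^2-\beta^2 = N^2$, $n+\alpha = n+N$, and $n+\beta = n$. Dividing both sides through by the leading coefficient $2(n+1)(n+N+1)(2n+N)$ then reproduces the formula (\ref{195}) in the statement of the lemma verbatim.

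The initial conditions can be verified directly from the Rodrigues formula
\[ P_n^{(\alpha,\beta)}(x) = \frac{(-1)^n}{2^n\, n!}(1-x)^{-\alpha}(1+x)^{-\beta}\frac{d^n}{dx^n}\bigl[(1-x)^{n+\alpha}(1+x)^{n+\beta}\bigr]. \]
For $n=0$ this gives $P_0^{(N,0)}(x) = 1$ immediately; for $n=1$ a one-line differentiation produces $P_1^{(\alpha,\beta)}(x) = \tfrac{1}{2}[(\alpha-\beta) + (\alpha+\beta+2)x]$, which at $(\alpha,\beta) = (N,0)$ reduces to $(N+(N+2)x)/2$ as claimed.

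There is no conceptual obstacle here; the entire argument amounts to careful bookkeeping in the substitution $(\alpha,\beta) = (N,0)$. The only place where a small amount of care is needed is in confirming that the factor $(2n+N+2)$ appearing in the coefficient of $P_{n-1}^{(N,0)}$ in the stated recurrence is exactly $(2n+\alpha+\beta+2)|_{\beta=0}$, and in checking that the numerator and denominator in the stated form have deliberately been left unreduced (so that no common factors between the two coefficients have been cancelled in the transcription).
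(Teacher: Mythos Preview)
Your proposal is correct and matches the paper's approach: the paper does not give a proof at all but simply cites Abramowitz--Stegun, and your derivation is precisely the specialization of the general three-term recurrence (AS 22.7.1) to $(\alpha,\beta)=(N,0)$ that this citation points to. Indeed, the paper itself later (in Appendix~B, equations~(\ref{230.2})--(\ref{240.2})) writes out exactly the same general recurrence with $\beta=0$ when proving the related Lemma~\ref{lem750}.
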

The following lemma provides a stable recurrence relation that 
can be used to evaluate derivatives of a certain class of 
Jacobi Polynomials. It is readily obtained by differentiating 
(\ref{195}) with respect to $x$, 
\begin{lemma}\label{196}
For any integer $n\geq1$ and $N\geq0$,
\begin{align}\label{197}
&\hspace*{-4em} P_{n+1}^{(N,0)\prime}(x)=
\frac{(2n+N+1)N^2+(2n+N)(2n+N+1)(2n+N+2)x}
{2(n+1)(n+N+1)(2n+N)}
P_n^{(N,0)\prime}(x) \notag \\
&-\frac{2(n+N)(n)(2n+N+2)}
{2(n+1)(n+N+1)(2n+N)}
P_{n-1}^{(N,0)\prime}(x) \notag \\
&+\frac{(2n+N)(2n+N+1)(2n+N+2)}
{2(n+1)(n+N+1)(2n+N)}
P_n^{(N,0)}(x),
\end{align}
where 
\begin{equation}
P_{0}^{(N,0)\prime}(x)=0
\end{equation}
and 
\begin{equation}
P_{1}^{(N,0)\prime}(x)=\frac{(N+2)}{2}.
\end{equation}
The Jacobi Polynomial $P_n^{(N,0)}$ is defined in (\ref{192}) 
and $P_n^{(N,0)\prime}(x)$ denotes the derivative 
of $P_n^{(N,0)}(x)$ with respect to $x$.
\end{lemma}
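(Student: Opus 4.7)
The plan is straightforward: differentiate the recurrence (\ref{195}) term by term with respect to $x$, and verify the new initial conditions directly. Write (\ref{195}) schematically as
\[
P_{n+1}^{(N,0)}(x) = A_n(x)\, P_n^{(N,0)}(x) - B_n\, P_{n-1}^{(N,0)}(x),
\]
where
\[
A_n(x) = \frac{(2n+N+1)N^2 + (2n+N)(2n+N+1)(2n+N+2)\,x}{2(n+1)(n+N+1)(2n+N)}
\]
is affine in $x$ and $B_n$ does not depend on $x$. Differentiating the second term yields $-B_n\, P_{n-1}^{(N,0)\prime}(x)$, which reproduces the middle coefficient of (\ref{197}) verbatim.

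The only term that requires the product rule is $A_n(x)\, P_n^{(N,0)}(x)$, whose derivative is $A_n(x)\, P_n^{(N,0)\prime}(x) + A_n'(x)\, P_n^{(N,0)}(x)$. The first piece reproduces the leading coefficient of (\ref{197}) with $P_n^{(N,0)\prime}(x)$ in place of $P_n^{(N,0)}(x)$. The second piece supplies the extra term on the last line of (\ref{197}); indeed,
\[
A_n'(x) = \frac{(2n+N)(2n+N+1)(2n+N+2)}{2(n+1)(n+N+1)(2n+N)},
\]
which matches the stated coefficient of $P_n^{(N,0)}(x)$ exactly. Collecting the three contributions yields (\ref{197}).

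The initial conditions follow by direct differentiation of the base cases of the original recurrence: $P_0^{(N,0)}(x) = 1$ has derivative $0$, and $P_1^{(N,0)}(x) = (N + (N+2)x)/2$ has derivative $(N+2)/2$, both in agreement with the stated values.

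There is essentially no obstacle beyond careful bookkeeping. The only subtlety worth noting is that the extra term on the final line of (\ref{197}) arises solely from the linear-in-$x$ piece of the numerator of $A_n(x)$, since the $N^2$ piece is constant in $x$; once this is observed, the derivation is purely mechanical.
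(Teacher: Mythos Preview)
Your proposal is correct and follows exactly the approach indicated in the paper, which states that the lemma ``is readily obtained by differentiating (\ref{195}) with respect to $x$.'' Your explicit bookkeeping---writing the recurrence as $P_{n+1}^{(N,0)}(x) = A_n(x)\,P_n^{(N,0)}(x) - B_n\,P_{n-1}^{(N,0)}(x)$ with $A_n$ affine and $B_n$ constant, applying the product rule, and checking the base cases---is precisely what the paper intends.
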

The following lemma, which provides a differential equation 
for Jacobi polynomials, can be found in \cite{abramowitz}
\begin{lemma}\label{205}
For any integer $n$,
\begin{equation}\label{210}
(1-x^2)P_n^{(k,0)\prime\prime}(x)+(-k-(k+2)x)P_n^{(k,0)\prime}(x)
+n(n+k+1)P_n^{(k,0)}(x)=0
\end{equation}
for all $x\in [0,1]$ where $P_n^{(N,0)}$ is defined in (\ref{192}).
\end{lemma}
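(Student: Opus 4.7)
The plan is to prove the ODE by a standard Sturm--Liouville / orthogonality argument, using only the definition of $P_n^{(k,0)}$ as the degree-$n$ polynomial orthogonal to all lower-degree polynomials with respect to the weight $(1-x)^k$, as stated in (\ref{192}) with $\beta = 0$.

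First, I would introduce the linear differential operator
\begin{equation*}
L[y](x) = (1-x^2)\,y''(x) + \bigl(-k - (k+2)x\bigr)\,y'(x),
\end{equation*}
so that the lemma is equivalent to the eigenvalue statement $L[P_n^{(k,0)}] = -n(n+k+1)\,P_n^{(k,0)}$. Observe that $L$ can be rewritten in Sturm--Liouville form as
\begin{equation*}
L[y](x) = (1-x)^{-k}\frac{d}{dx}\Bigl[(1-x)^{k+1}(1+x)\,y'(x)\Bigr],
\end{equation*}
which is easily verified by carrying out the $x$-derivative. This representation is the key tool for the next step.

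Second, I would show that $L$ is symmetric on polynomials with respect to the weighted inner product $\langle f,g\rangle = \int_{-1}^{1} f(x)g(x)(1-x)^k\,dx$. Using the Sturm--Liouville form above and integrating by parts twice, the boundary terms are of the form $(1-x)^{k+1}(1+x)\bigl[f'g - fg'\bigr]$ evaluated at $\pm 1$, which vanish for any $k > -1$ and any polynomials $f,g$. This yields $\langle L[f], g\rangle = \langle f, L[g]\rangle$.

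Third, I would observe that $L$ maps polynomials of degree $n$ to polynomials of degree at most $n$: the $(1-x^2)y''$ term lowers the degree by two but multiplication by $-x^2$ raises it by two, the $-(k+2)xy'$ term preserves the degree, and the $-ky'$ term lowers it. Combining this with symmetry and the defining orthogonality of $P_n^{(k,0)}$, for every polynomial $q$ with $\deg q < n$ we get
\begin{equation*}
\langle L[P_n^{(k,0)}], q \rangle = \langle P_n^{(k,0)}, L[q] \rangle = 0,
\end{equation*}
because $L[q]$ has degree less than $n$. Thus $L[P_n^{(k,0)}]$ is a polynomial of degree at most $n$ orthogonal to every polynomial of lower degree, forcing $L[P_n^{(k,0)}] = c_n\,P_n^{(k,0)}$ for some scalar $c_n$.

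Finally, I would compute $c_n$ by matching the coefficient of $x^n$. If $P_n^{(k,0)}$ has leading coefficient $a_n$, then the $x^n$ coefficient of $(1-x^2)P_n^{(k,0)\prime\prime}$ is $-n(n-1)a_n$, and the $x^n$ coefficient of $-(k+2)xP_n^{(k,0)\prime}$ is $-n(k+2)a_n$, giving $c_n = -n(n-1) - n(k+2) = -n(n+k+1)$. This yields exactly (\ref{210}).

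The main obstacle is the symmetry verification in the second step: one has to identify the Sturm--Liouville form of $L$ and check that the boundary contributions from integration by parts vanish at both endpoints thanks to the factor $(1-x)^{k+1}(1+x)$; once this is in hand, the remaining steps are essentially leading-coefficient bookkeeping.
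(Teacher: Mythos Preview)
Your proof is correct. The Sturm--Liouville form you write down,
\[
L[y](x) = (1-x)^{-k}\frac{d}{dx}\Bigl[(1-x)^{k+1}(1+x)\,y'(x)\Bigr],
\]
does expand to the stated operator, the boundary terms $(1-x)^{k+1}(1+x)[f'g-fg']\big|_{-1}^{1}$ do vanish for $k>-1$ and polynomial $f,g$, the degree-preservation and orthogonality steps are clean, and the leading-coefficient computation $c_n=-n(n-1)-n(k+2)=-n(n+k+1)$ is right. Since the resulting identity is between polynomials, it holds on all of $\R$, in particular on $[0,1]$ as stated.

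As for comparison with the paper: the paper does not actually prove this lemma. It is quoted as a classical fact from Abramowitz--Stegun, so there is no argument in the paper to compare against. Your self-contained Sturm--Liouville derivation is the standard textbook route to the Jacobi ODE and is exactly what one would supply if asked to justify the citation; it buys the reader independence from the handbook at the cost of a short computation.
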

\begin{remark}
We will be denoting by $\widetilde{P}_n: [0,1] \rightarrow \R$ the
shifted Jacobi polynomial defined for any non-negative integer $n$
by the formula
\begin{equation}\label{215}
\widetilde{P}_n(x)=\sqrt{2n+2}P_n^{(1,0)}(1-2x)
\end{equation}
where $P_n^{(1,0)}$ is defined in (\ref{192}). The roots of 
$\widetilde{P}_n$ will be used in Section \ref{seczernquad} and Section 
\ref{secapprox} in the design of quadrature and interpolation
schemes for Zernike polynomials. 
\end{remark}
It follows immediately from the combination of 
(\ref{192}) and (\ref{215}) that the polynomials $\widetilde{P}_n$ 
are orthogonal on $[0,1]$ with respect to weight function
\begin{equation}
w(x)=x. 
\end{equation}
That is, for any non-negative integers $i,j$, 
\begin{equation}\label{740}
\int_0^1\widetilde{P}_i(r)\widetilde{P}_j(r)rdr=\delta_{i,j}.
\end{equation}
\subsection{Gaussian Quadratures}\label{secgengauss}
In this section, we introduce Gaussian Quadratures.
\begin{definition}\label{220}
A Gaussian Quadrature with respect to a set of functions
$f_1,...,f_{2n-1}:[a,b]\rightarrow \mathbb{R}$ and non-negative weight 
function $w:[a,b]\rightarrow \mathbb{R}$ is a set of $n$ nodes, 
$x_1,...,x_n\in[a,b]$, and $n$ weights,
$\omega_1,...,\omega_n \in \R$, such that, for any integer $j\leq 2n-1$,
\begin{equation}\label{240}
\int_a^b f_j(x)w(x)dx=\sum_{i=0}^n\omega_i f_j(x_i).
\end{equation}
\end{definition}
The following is a well-known lemma from numerical analysis.
A proof can be found in, for example, \cite{stoer}.
\begin{lemma}\label{260}
Suppose that $p_0,p_1,...:[a,b]\rightarrow \R$ 
is a set of orthonormal polynomials with respect 
to some non-negative weight function $w:[a,b]
\rightarrow \mathbb{R}$ such that polynomial $p_i$ is of 
degree $i$. Then, \\\\
i) Polynomial $p_i$ has exactly $i$ roots on $[a,b]$.\\\\
ii) For any non-negative integer $n$ and for $i=0,1,...,2n-1$, we have
\begin{equation}\label{280}
\int_a^b p_i(x)w(x)dx=\sum_{k=1}^n\omega_k p_i(x_k)
\end{equation}
where $x_1,...,x_n \in [a,b]$ are the $n$ roots
of $p_n$ and where weights $\omega_1,...,\omega_n \in \R$ solve 
the $n \times n$ system of linear equations
\begin{equation}\label{300}
\sum_{k=1}^n \omega_k p_j(x_k) = \int_a^b w(x)p_j(x)dx
\end{equation}
with $j=0,1,...,n-1$. \\\\
iii) The weights, $\omega_i$, satisfy the identity,
\begin{equation}\label{320}
\omega_i=\left(\sum_{k=0}^{n-1} p_k(x_i)^2 \right)^{-1}
\end{equation}
for $i=1,2,...,n$.
\end{lemma}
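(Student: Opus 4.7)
The plan is to prove the three parts of Lemma \ref{260} in order, with each part feeding into the next.

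For part (i), I would use the classical sign-change argument: assume for contradiction that $p_n$ has only $m < n$ sign changes on $[a,b]$, say at points $y_1,\ldots,y_m$. Then the polynomial $q(x) = (x-y_1)\cdots(x-y_m)$ has degree $m < n$, so by orthonormality $\int_a^b p_n(x)\, q(x)\, w(x)\, dx = 0$. But by construction $p_n(x) q(x)$ does not change sign on $[a,b]$ and is not identically zero, so the integral is nonzero, a contradiction. Since $p_n$ is a polynomial of degree $n$, it has at most $n$ roots in $\mathbb{R}$, hence exactly $n$ roots in $[a,b]$.

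For part (ii), given any polynomial $q$ of degree $\leq 2n-1$, I would apply the division algorithm to write $q = p_n \cdot s + r$ where $s$ and $r$ are polynomials of degree $\leq n-1$. Because $p_n$ is orthogonal to every polynomial of degree $< n$, we get $\int_a^b q\, w = \int_a^b r\, w$. On the quadrature side, $p_n(x_k) = 0$ for all $k$, so $q(x_k) = r(x_k)$. The system (\ref{300}) is precisely the statement that the quadrature is exact on the basis $p_0,\ldots,p_{n-1}$, and therefore on every polynomial of degree $\leq n-1$ (in particular on $r$). Combining these identities yields the quadrature formula (\ref{280}) for every $p_i$ with $i \leq 2n-1$. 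The nonsingularity of the linear system (\ref{300}) — needed to guarantee existence of the weights — follows from part (i), which ensures the $x_k$ are distinct and hence the Vandermonde-type matrix $[p_j(x_k)]$ is invertible.

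For part (iii), I would fix an index $i$ and introduce the Lagrange interpolant
\begin{equation}
\ell_i(x) = \prod_{j \ne i} \frac{x - x_j}{x_i - x_j},
\end{equation}
a polynomial of degree $n-1$ satisfying $\ell_i(x_k) = \delta_{i,k}$. Since $\ell_i^2$ has degree $2n-2 \leq 2n-1$, part (ii) gives $\int_a^b \ell_i^2\, w = \sum_k \omega_k\, \ell_i(x_k)^2 = \omega_i$. Expanding in the orthonormal basis, $\ell_i = \sum_{k=0}^{n-1} c_k\, p_k$ with $c_k = \int_a^b \ell_i(x) p_k(x) w(x)\,dx$; since $\ell_i p_k$ has degree $\leq 2n-2$, the quadrature rule applies exactly and gives $c_k = \omega_i\, p_k(x_i)$. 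Then $\omega_i = \int_a^b \ell_i^2\, w = \sum_{k=0}^{n-1} c_k^2 = \omega_i^2 \sum_{k=0}^{n-1} p_k(x_i)^2$, which rearranges to (\ref{320}).

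The main obstacle is really only a bookkeeping one: making sure that at each stage the polynomial degrees being multiplied remain within the range where the already-established quadrature exactness applies (degree $\leq 2n-1$). In particular, in part (iii) one must be careful that $\ell_i p_k$ has degree exactly $\leq 2n-2$ so that (ii) is applicable. Everything else reduces to the classical sign-change trick and division algorithm.
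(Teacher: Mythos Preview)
Your proof is correct and follows the standard textbook argument for Gaussian quadrature. The paper itself does not prove this lemma at all: it simply states that ``a proof can be found in, for example, \cite{stoer}'' (Stoer and Bulirsch, \emph{Introduction to Numerical Analysis}), and your argument is essentially the one given there --- sign-change for the roots, division by $p_n$ for exactness up to degree $2n-1$, and the Lagrange/Parseval computation for the Christoffel weights.
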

\subsection{Zernike Polynomials} \label{seczern}
In this section, we define Zernike Polynomials 
and describe some of their basic properties. 

Zernike polynomials are a family of orthogonal polynomials 
defined on the unit ball in $\R^n$. In this paper, we primarily
discuss Zernike polynomials in $\R^2$, however nearly all of the 
theory and numerical machinery in two dimensions generalizes
naturally to higher dimensions. The mathematical properties
of Zernike polynomials in $\R^n$ are included in Appendix A.

Zernike Polynomials are defined via the formula
\begin{equation}\label{340}
Z_{N,n}^\ell(x) = R_{N,n}(r)S_N^{\ell}(\theta)
\end{equation}
for all $x\in \R^2$ such that $\|x\|\leq1$, $(r,\theta)$ is the 
representation of $x$ in polar coordinates, $N,n$ are non-
negative integers, $S_N^\ell$ is defined in (\ref{20}), 
and  $R_{N,n}$ are polynomials of degree $N+2n$ defined by the
formula
  \begin{align}\label{360}
R_{N,n}(x) = x^N \sum_{k=0}^n (-1)^k {n+N+\frac{p}{2} \choose k} 
  {n\choose k} (x^2)^{n-k} (1-x^2)^k,
   \end{align}
for all $0\le x\le 1$. Furthermore, for any non-negative 
integers $N,n,m$,
  \begin{align}\label{1.45}
\int_0^1 R_{N,n}(x)R_{N,m}(x) x\, dx = \frac{\delta_{n,m}}
  {2(2n+N+1)}
  \end{align}
and
  \begin{align}\label{1.35}
R_{N,n}(1) = 1.
  \end{align}
We define the normalized polynomials $\overline{R}_{N,n}$ via the formula
  \begin{align}\label{1.65}
\overline{R}_{N,n}(x) = \sqrt{2(2n+N+1)} R_{N,n}(x),
  \end{align}
so that
  \begin{align}\label{1.75}
\int_0^1 \bigl(\overline{R}_{N,n}(x)\bigr)^2 x\, dx = 1,
  \end{align}
where $N$ and $n$ are non-negative integers.
We define the normalized Zernike polynomial, 
$\overline{Z}_{N,n}^\ell$, by the formula
\begin{equation}\label{10.85}
\overline{Z}_{N,n}(x)=\overline{R}_{N,n}(r)S_{N}^\ell(\theta)
\end{equation}
where $x\in\R^2$ satisfies $\|x\|\leq 1$, and $N,n$ are 
non-negative integers. We observe that $\overline{Z}_{N,n}^\ell$
has $L^2$ norm of $1$ on the unit disk. 

In an abuse of notation, we use $Z_{N,n}^\ell(x)$ and 
$Z_{N,n}^\ell(r,\theta)$ interchangeably where $(r,\theta)$
is the polar coordinate representation of $x\in \R^2$.
\section{Numerical Evaluation of Zernike Polynomials}\label{secnumev}
In this section, we provide a stable recurrence relation 
(see Lemma \ref{lem755}) that can be used to 
evaluate Zernike Polynomials.

\begin{lemma}\label{lem755}
The polynomials $R_{N,n}$, defined in (\ref{360}) satisfy the recurrence relation
\begin{align}\label{7550}
&\hspace*{-4em}
R_{N,n+1}(x)= \notag \\
&\hspace*{-4em}-\frac{((2n+N+1)N^2+(2n+N)(2n+N+1)(2n+N+2)(1-2x^2))}
{2(n+1)(n+N+1)(2n+N)}
R_{N,n}(x)  \notag \\
&\hspace*{-4em}-\frac{2(n+N)(n)(2n+N+2)}
{2(n+1)(n+N+1)(2n+N)}
R_{N,n-1}(x)
\end{align}
where $0\le x\le 1$, $N$ is a non-negative integer, $n$ is a 
positive integer, and
\begin{equation}
R_{N,0}(x)=x^N
\end{equation}
and 
\begin{equation}
R_{N,1}(x)=-x^N\frac{N+(N+2)(1-2x^2)}{2}.
\end{equation}
\end{lemma}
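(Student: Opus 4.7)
The plan is to reduce the recurrence (\ref{7550}) to the Jacobi polynomial recurrence (\ref{195}) by means of the classical identification
$$R_{N,n}(x) = (-1)^n x^N P_n^{(N,0)}(1-2x^2).$$
Once this identification is in hand, (\ref{7550}) is nothing more than (\ref{195}) evaluated at $y = 1-2x^2$ and rescaled by $(-1)^{n+1} x^N$; the leading minus sign in (\ref{7550}) is produced precisely by the parity shift between $(-1)^n$ and $(-1)^{n+1}$.

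First I would establish the identification by comparing series expansions. Starting from the standard hypergeometric form of Jacobi polynomials,
$$P_n^{(\alpha,\beta)}(y) = \sum_{k=0}^n \binom{n+\alpha}{n-k}\binom{n+\beta}{k}\left(\frac{y-1}{2}\right)^k\left(\frac{y+1}{2}\right)^{n-k},$$
and setting $\alpha = N$, $\beta = 0$, $y = 1 - 2x^2$ so that $(y-1)/2 = -x^2$ and $(y+1)/2 = 1-x^2$, one obtains
$$P_n^{(N,0)}(1-2x^2) = \sum_{k=0}^n (-1)^k \binom{n+N}{n-k}\binom{n}{k}(x^2)^k (1-x^2)^{n-k}.$$
Reindexing $k \mapsto n-k$ and using the symmetry $\binom{n}{n-k} = \binom{n}{k}$ converts this sum into $(-1)^n$ times the sum appearing on the right-hand side of (\ref{360}) divided by $x^N$, yielding the claimed identity.

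Next I would substitute this identification into (\ref{195}) at the argument $y = 1-2x^2$ and multiply through by $(-1)^{n+1} x^N$. Since
$$(-1)^{n+1} x^N P_{n+1}^{(N,0)}(1-2x^2) = R_{N,n+1}(x), \qquad (-1)^{n+1} x^N P_n^{(N,0)}(1-2x^2) = -R_{N,n}(x),$$
$$(-1)^{n+1} x^N P_{n-1}^{(N,0)}(1-2x^2) = R_{N,n-1}(x),$$
the first coefficient of (\ref{195}) picks up a minus sign while the second is preserved, which is exactly the arrangement of signs in (\ref{7550}). The initial conditions fall out immediately: $P_0^{(N,0)} \equiv 1$ gives $R_{N,0}(x) = x^N$, and $P_1^{(N,0)}(y) = (N + (N+2)y)/2$ combined with $y = 1-2x^2$ and the factor $-x^N$ yields the stated expression for $R_{N,1}(x)$.

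The main obstacle is the combinatorial matching at the second step: one has to produce the precise binomial coefficients and powers of $(x^2)$ and $(1-x^2)$ that appear in (\ref{360}). Once the reindexing $k \mapsto n-k$ is performed and $\binom{n}{n-k} = \binom{n}{k}$ is invoked, however, the remainder of the argument is purely algebraic sign-and-factor bookkeeping, so modulo the identification the recurrence is simply inherited from its Jacobi counterpart.
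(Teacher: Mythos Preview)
Your proposal is correct and follows essentially the same route as the paper: the paper's proof simply cites the identity $R_{N,n}(x) = (-1)^n x^N P_n^{(N,0)}(1-2x^2)$ from Abramowitz--Stegun and then says that (\ref{7550}) follows immediately from combining this with the Jacobi recurrence (\ref{195}). You supply a self-contained derivation of that identity via the series representation and track the sign bookkeeping explicitly, which is more detail than the paper gives but not a different argument.
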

\begin{proof}
According to \cite{abramowitz}, for any non-negative integers $n$ and $N$,
\begin{equation}    \label{6.245}
R_{N,n}(x) = (-1)^n x^N P_n^{(N,0)}(1-2x^2),
\end{equation}
where $0\le x\le 1$, $N$ and $n$ are nonnegative integers, and
$P^{(N,0)}_n$ denotes a Jacobi polynomial (see (\ref{192})).

Identity (\ref{7550}) follows immediately from the 
combination of (\ref{6.245}) and (\ref{195}).
\end{proof}

\begin{remark}
The algorithm for evaluating Zernike polynomials using the recurrence
relation in Lemma~\ref{lem755} is known as Kintner's method
(see~\cite{kintner} and, for example,~\cite{chong}).

\end{remark}
\section{Quadrature for Zernike Polynomials}\label{seczernquad}
In this section, we provide a quadrature rule for Zernike Polynomials. 

The following lemma follows immediately from applying Lemma \ref{260} 
to the polynomials $\widetilde{P}_n$ defined in (\ref{215}).
\begin{lemma}\label{lem4.30}
Let $\{r_1,...,r_{m}\}$ be the $m$ roots of $\widetilde{P}_m$ 
(see (\ref{215})) and $\{\omega_1,...,\omega_m\}$
the $m$ weights of the Gaussian quadrature (see (\ref{240}))
for the polynomials $\widetilde{P}_0,\widetilde{P}_1,...,
\widetilde{P}_{2m-1}$ (see (\ref{215})). Then, for any polynomial 
$q$ of degree at most $2m-1$,
\begin{equation}\label{10.60}
\int_0^1 q(x)xdx=\sum_{i=1}^{m}
q(r_i)\omega_i.
\end{equation}
\end{lemma}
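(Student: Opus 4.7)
The plan is to verify that the hypotheses of Lemma \ref{260} apply to the family $\{\widetilde{P}_n\}$ on the interval $[0,1]$ with weight function $w(x)=x$, and then to extend the exact integration identity from basis elements to arbitrary polynomials of degree at most $2m-1$ by linearity.

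First I would note that by the definition in (\ref{215}), $\widetilde{P}_n$ is a polynomial of degree exactly $n$, since $P_n^{(1,0)}$ is a polynomial of degree $n$ and the affine change of variable $x \mapsto 1-2x$ preserves degree. Moreover, equation (\ref{740}) states that $\int_0^1 \widetilde{P}_i(r)\widetilde{P}_j(r)\, r\, dr = \delta_{i,j}$, so the $\widetilde{P}_n$ form an orthonormal family on $[0,1]$ with respect to the non-negative weight $w(x)=x$. This places the family squarely in the setting of Lemma \ref{260}.

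Applying part (ii) of Lemma \ref{260} with $p_n = \widetilde{P}_n$ and $w(x)=x$, and taking $\{r_1,\dots,r_m\}$ to be the roots of $\widetilde{P}_m$ on $[0,1]$ (whose existence and count are guaranteed by part (i)) together with the weights $\{\omega_1,\dots,\omega_m\}$ determined by (\ref{300}), we obtain the exactness
\begin{equation}
\int_0^1 \widetilde{P}_j(x)\, x\, dx = \sum_{i=1}^{m} \widetilde{P}_j(r_i)\,\omega_i
\end{equation}
for every $j\in\{0,1,\dots,2m-1\}$. These are precisely the quadrature nodes and weights referenced in the statement of Lemma \ref{lem4.30}.

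Finally, to handle an arbitrary polynomial $q$ of degree at most $2m-1$, I would invoke the fact that $\{\widetilde{P}_0,\widetilde{P}_1,\dots,\widetilde{P}_{2m-1}\}$ is a basis for the space of polynomials of degree at most $2m-1$ (since their degrees are $0,1,\dots,2m-1$). Thus we may write $q(x) = \sum_{j=0}^{2m-1} c_j \widetilde{P}_j(x)$ for some real coefficients $c_j$, and linearity of both the integral and the finite sum immediately yields
\begin{equation}
\int_0^1 q(x)\, x\, dx = \sum_{j=0}^{2m-1} c_j \int_0^1 \widetilde{P}_j(x)\, x\, dx = \sum_{j=0}^{2m-1} c_j \sum_{i=1}^m \widetilde{P}_j(r_i)\,\omega_i = \sum_{i=1}^m q(r_i)\,\omega_i,
\end{equation}
which is (\ref{10.60}). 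There is no real obstacle here; the entire argument is a direct specialization of Lemma \ref{260} to the weight $w(x)=x$ on $[0,1]$, combined with the standard basis extension, and the only thing to be careful about is verifying the orthonormality and degree claims, both of which are already recorded in the excerpt.
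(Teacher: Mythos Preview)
Your proof is correct and is exactly the approach the paper takes: the paper does not give a separate proof of Lemma~\ref{lem4.30} but simply states that it ``follows immediately from applying Lemma~\ref{260} to the polynomials $\widetilde{P}_n$,'' which is precisely what you have spelled out in detail.
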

The following theorem provides a quadrature rule for Zernike Polynomials.
\begin{theorem}\label{lem4.70}
Let $\{r_1,...,r_{m}\}$ be the $m$ roots of $\widetilde{P}_m$ 
(see (\ref{215})) and $\{\omega_1,...,\omega_m\}$
the $m$ weights of the Gaussian quadrature (see (\ref{240}))
for the polynomials $\widetilde{P}_0,\widetilde{P}_1,...,
\widetilde{P}_{2m-2}$ (see (\ref{215})). Then, for all $\ell 
\in \{0,1\}$ and for all $N,n\in \{0,1,...\}$ such that $N+2n 
\leq 2m-1$,
\begin{equation}\label{10.7}
\int_{D}Z_{N,n}^\ell(x)dx=\sum_{i=1}^{m}
R_{N,n}(r_i)\omega_i
\sum_{j=1}^{2m}
\frac{2\pi}{2m}S_N^\ell(\theta_j)
\end{equation}
where $R_{N,n}$ is defined in (\ref{360}), $\theta_j$ is defined 
by the formula
\begin{equation}
\theta_j=j\frac{2\pi}{2m}
\end{equation}
for $j\in \{1,2,...,2m\}$, and $D\subseteq \R^2$ denotes the unit disk.
Furthermore, there are exactly $2m^2+m$ Zernike Polynomials of 
degree at most $2m-1$.
\end{theorem}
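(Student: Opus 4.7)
The plan is to prove the quadrature identity by first factoring the integral in polar coordinates and then handling the radial and angular parts separately using the two quadrature results already established in the paper. In polar coordinates,
\begin{equation*}
\int_{D}Z_{N,n}^\ell(x)\,dx
=\int_0^{2\pi}\!\!\int_0^1 R_{N,n}(r)\,S_N^\ell(\theta)\,r\,dr\,d\theta
=\Bigl(\int_0^1 R_{N,n}(r)\,r\,dr\Bigr)\Bigl(\int_0^{2\pi} S_N^\ell(\theta)\,d\theta\Bigr),
\end{equation*}
so the argument splits into showing that each factor is reproduced exactly by the corresponding discrete sum under the hypothesis $N+2n\le 2m-1$.

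For the radial factor, I would invoke Lemma~\ref{lem4.30}: since $R_{N,n}$ is a polynomial of degree $N+2n\le 2m-1$, the $m$-point Gaussian quadrature associated with the nodes $r_1,\dots,r_m$ and weights $\omega_1,\dots,\omega_m$ integrates $R_{N,n}(r)\,r$ exactly on $[0,1]$ with weight $w(r)=r$. For the angular factor, I would invoke Lemma~\ref{60}, using that $2m$ equispaced nodes $\theta_j=j\,(2\pi/2m)$ satisfy the hypothesis $k\ge N+1$ because $N\le 2m-1<2m$. There is a minor bookkeeping point: when $N=0$ the function $S_0^\ell$ is the constant $(2\pi)^{-1/2}$, for which both the continuous integral and the equispaced sum equal $\sqrt{2\pi}$, while for $N>0$ both are zero by Lemma~\ref{60}; in either case the angular integral equals $\sum_{j=1}^{2m}(2\pi/2m)\,S_N^\ell(\theta_j)$. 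Multiplying the two exact discrete factors yields (\ref{10.7}).

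For the counting claim, I would apply Lemma~\ref{140}, which asserts that the set of pairs $(N,n)$ of non-negative integers with $N+2n\le 2m-1$ has $m^2+m$ elements. The pairs with $N=0$ correspond to $n\in\{0,1,\dots,m-1\}$ and so contribute $m$ Zernike polynomials (the index $\ell$ is degenerate when $N=0$, as is clear from (\ref{20})). The remaining $m^2$ pairs have $N\ge 1$, and for each of these both choices $\ell\in\{0,1\}$ give distinct Zernike polynomials, contributing $2m^2$. The total is therefore $m+2m^2=2m^2+m$.

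The proof is essentially an assembly of prior lemmas, so there is no substantial obstacle. The only subtlety worth flagging is the asymmetry between the cases $N=0$ and $N>0$ in both the angular quadrature and the enumeration: one must be careful not to double-count the $N=0$ Zernike polynomials (which would give $2m^2+2m$ as in Lemma~\ref{140} rather than the correct $2m^2+m$), and one must verify by direct computation that the equispaced rule reproduces the constant $S_0^1$ exactly, since Lemma~\ref{60} is stated only for $n\ge 1$.
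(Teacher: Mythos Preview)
Your proposal is correct and follows essentially the same approach as the paper: factor the integral in polar coordinates, apply Lemma~\ref{lem4.30} to the radial part and Lemma~\ref{60} to the angular part, and obtain the count from Lemma~\ref{140} after correcting for the degeneracy of $\ell$ when $N=0$. If anything, you are slightly more careful than the paper in explicitly checking the $N=0$ angular case and in spelling out the enumeration.
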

\begin{proof}
Applying a change of variables, 
\begin{equation}\label{10.10}
\int_{D}Z_{N,n}^\ell(x)dx=\int_0^1\int_0^{2\pi}
R_{N,n}(r)S_N^\ell(\theta)rdrd\theta,
\end{equation}
where $Z_{N,n}^\ell$ is a Zernike polynomial (see (\ref{340})) and
where $R_{N,n}$ is defined in (\ref{1.45}).
Changing the order of integration of (\ref{10.10}), we obtain
\begin{equation}\label{10.20}
\int_{D}Z_{N,n}^\ell(x)dx
=\int_0^1rR_{N,n}(r)dr\int_0^{2\pi}S_N^\ell(\theta)d\theta.
\end{equation}
Applying Lemma \ref{60} and Lemma \ref{lem4.30} to (\ref{10.20}),
we obtain
%
\begin{equation}\label{10.30}
\int_{D}Z_{N,n}^\ell(x)dx
=\sum_{i=1}^{m} R_{N,n}(r_i)\omega_i
\sum_{j=1}^{2m} \frac{2\pi}{2m}S_N^\ell(\theta_j)
\end{equation}
for $N+2n\leq 2m-1$. 
The fact that there are exactly $2m^2+m$ Zernike polynomials 
of degree at most $2m-1$ follows immediately 
from the combination of Lemma \ref{140} with the fact that 
there are exactly $m$ Zernike polynomials of degree at most 
$2m-1$ that are of the form $Z_{0,n}^{\ell}$.
\end{proof}
\begin{remark}
It follows immediately from Lemma \ref{lem4.70}
that for all $m\in \{1,2,...\}$, placing $m$
nodes in the radial direction and $2m$ nodes in the 
angular direction (as described in Lemma \ref{lem4.70}),
integrates exactly the $2m^2+m$ Zernike polynomials on 
the disk of degree at most $2m-1$.
\end{remark}
\begin{remark}
The $n$ roots of $\widetilde{P}_n$ (see \ref{740}) can be found by 
using, for example, the algorithm described in Section \ref{secalg}.
\end{remark}
\begin{remark}
For Zernike polynomial discretization in $\R^{k+1}$,
roots of the polynomials $\widetilde{P}_n^k$ are used, where 
$\widetilde{P}_n^k$ is defined by the formula
\begin{equation}\label{210.60}
\widetilde{P}_n^k(x)=\sqrt{k+2n+1}P_n^{(k,0)}(1-2x).
\end{equation}
Properties of this class of Jacobi polynomials 
are provided in Appendix D in addition to an algorithm 
for finding their roots. 
\end{remark}
The following remark illustrates that the advantage
of quadrature rule (\ref{10.7}) is especially noticeable
in higher dimensions. 
\begin{remark}
Quadrature rule (\ref{10.7}) integrates all Zernike polynomials
up to order $2m-1$ using the $m$ roots of $\widetilde{P}_m$ 
(see (\ref{740})) as nodes in the radial direction. 
Using Guass-Legendre nodes instead of roots of $\widetilde{P}_m$
would require using $m+1$ nodes in the radial direction.

The high-dimensional equivalent of quadrature rule (\ref{10.7})
uses the roots of $\widetilde{P}_m^{p+1}$ (see (\ref{760.2})) as
nodes in the radial direction. Using Gauss-Legendre nodes instead 
of these nodes would require using an extra $p+1$ nodes in the 
radial direction or approximately $(p+1)m^{p+1}$ extra nodes total.
\end{remark}
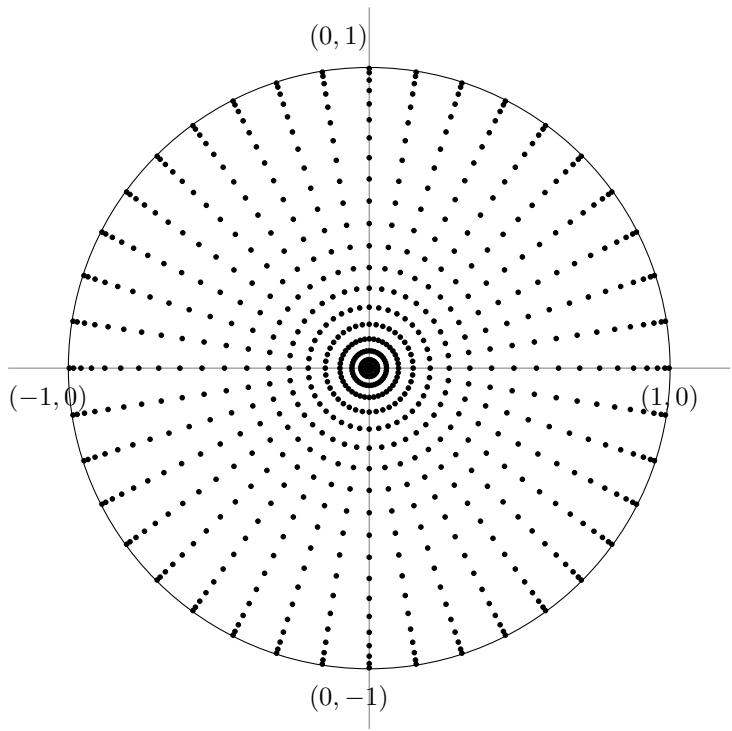
\begin{figure}[h!]
\centering

 \begin{tikzpicture} [scale=4.0]

\draw[black] (0,0) circle (1cm);

\node[text width=0.4cm] at (-1.15,-0.1) {\small $(-1,0)$};
\node[text width=0.4cm] at (-0.15,1.1) {\small $(0,1)$};
\node[text width=0.4cm] at (0.95,-0.1) {\small $(1,0)$};
\node[text width=0.4cm] at (-0.15,-1.1) {\small $(0,-1)$};

\draw[black,opacity=0.5] (-1.2,0) -- (1.2,0);
\draw[black,opacity=0.5] (0,-1.2) -- (0,1.2);

\draw[fill] (  0.00830,  0.00000) circle [radius=0.0075];
\draw[fill] (  0.00820,  0.00130) circle [radius=0.0075];
\draw[fill] (  0.00789,  0.00256) circle [radius=0.0075];
\draw[fill] (  0.00740,  0.00377) circle [radius=0.0075];
\draw[fill] (  0.00671,  0.00488) circle [radius=0.0075];
\draw[fill] (  0.00587,  0.00587) circle [radius=0.0075];
\draw[fill] (  0.00488,  0.00671) circle [radius=0.0075];
\draw[fill] (  0.00377,  0.00740) circle [radius=0.0075];
\draw[fill] (  0.00256,  0.00789) circle [radius=0.0075];
\draw[fill] (  0.00130,  0.00820) circle [radius=0.0075];
\draw[fill] (  0.00000,  0.00830) circle [radius=0.0075];
\draw[fill] ( -0.00130,  0.00820) circle [radius=0.0075];
\draw[fill] ( -0.00256,  0.00789) circle [radius=0.0075];
\draw[fill] ( -0.00377,  0.00740) circle [radius=0.0075];
\draw[fill] ( -0.00488,  0.00671) circle [radius=0.0075];
\draw[fill] ( -0.00587,  0.00587) circle [radius=0.0075];
\draw[fill] ( -0.00671,  0.00488) circle [radius=0.0075];
\draw[fill] ( -0.00740,  0.00377) circle [radius=0.0075];
\draw[fill] ( -0.00789,  0.00256) circle [radius=0.0075];
\draw[fill] ( -0.00820,  0.00130) circle [radius=0.0075];
\draw[fill] ( -0.00830,  0.00000) circle [radius=0.0075];
\draw[fill] ( -0.00820, -0.00130) circle [radius=0.0075];
\draw[fill] ( -0.00789, -0.00256) circle [radius=0.0075];
\draw[fill] ( -0.00740, -0.00377) circle [radius=0.0075];
\draw[fill] ( -0.00671, -0.00488) circle [radius=0.0075];
\draw[fill] ( -0.00587, -0.00587) circle [radius=0.0075];
\draw[fill] ( -0.00488, -0.00671) circle [radius=0.0075];
\draw[fill] ( -0.00377, -0.00740) circle [radius=0.0075];
\draw[fill] ( -0.00256, -0.00789) circle [radius=0.0075];
\draw[fill] ( -0.00130, -0.00820) circle [radius=0.0075];
\draw[fill] (  0.00000, -0.00830) circle [radius=0.0075];
\draw[fill] (  0.00130, -0.00820) circle [radius=0.0075];
\draw[fill] (  0.00256, -0.00789) circle [radius=0.0075];
\draw[fill] (  0.00377, -0.00740) circle [radius=0.0075];
\draw[fill] (  0.00488, -0.00671) circle [radius=0.0075];
\draw[fill] (  0.00587, -0.00587) circle [radius=0.0075];
\draw[fill] (  0.00671, -0.00488) circle [radius=0.0075];
\draw[fill] (  0.00740, -0.00377) circle [radius=0.0075];
\draw[fill] (  0.00789, -0.00256) circle [radius=0.0075];
\draw[fill] (  0.00820, -0.00130) circle [radius=0.0075];
\draw[fill] (  0.02764,  0.00000) circle [radius=0.0075];
\draw[fill] (  0.02730,  0.00432) circle [radius=0.0075];
\draw[fill] (  0.02629,  0.00854) circle [radius=0.0075];
\draw[fill] (  0.02463,  0.01255) circle [radius=0.0075];
\draw[fill] (  0.02236,  0.01625) circle [radius=0.0075];
\draw[fill] (  0.01955,  0.01955) circle [radius=0.0075];
\draw[fill] (  0.01625,  0.02236) circle [radius=0.0075];
\draw[fill] (  0.01255,  0.02463) circle [radius=0.0075];
\draw[fill] (  0.00854,  0.02629) circle [radius=0.0075];
\draw[fill] (  0.00432,  0.02730) circle [radius=0.0075];
\draw[fill] (  0.00000,  0.02764) circle [radius=0.0075];
\draw[fill] ( -0.00432,  0.02730) circle [radius=0.0075];
\draw[fill] ( -0.00854,  0.02629) circle [radius=0.0075];
\draw[fill] ( -0.01255,  0.02463) circle [radius=0.0075];
\draw[fill] ( -0.01625,  0.02236) circle [radius=0.0075];
\draw[fill] ( -0.01955,  0.01955) circle [radius=0.0075];
\draw[fill] ( -0.02236,  0.01625) circle [radius=0.0075];
\draw[fill] ( -0.02463,  0.01255) circle [radius=0.0075];
\draw[fill] ( -0.02629,  0.00854) circle [radius=0.0075];
\draw[fill] ( -0.02730,  0.00432) circle [radius=0.0075];
\draw[fill] ( -0.02764,  0.00000) circle [radius=0.0075];
\draw[fill] ( -0.02730, -0.00432) circle [radius=0.0075];
\draw[fill] ( -0.02629, -0.00854) circle [radius=0.0075];
\draw[fill] ( -0.02463, -0.01255) circle [radius=0.0075];
\draw[fill] ( -0.02236, -0.01625) circle [radius=0.0075];
\draw[fill] ( -0.01955, -0.01955) circle [radius=0.0075];
\draw[fill] ( -0.01625, -0.02236) circle [radius=0.0075];
\draw[fill] ( -0.01255, -0.02463) circle [radius=0.0075];
\draw[fill] ( -0.00854, -0.02629) circle [radius=0.0075];
\draw[fill] ( -0.00432, -0.02730) circle [radius=0.0075];
\draw[fill] (  0.00000, -0.02764) circle [radius=0.0075];
\draw[fill] (  0.00432, -0.02730) circle [radius=0.0075];
\draw[fill] (  0.00854, -0.02629) circle [radius=0.0075];
\draw[fill] (  0.01255, -0.02463) circle [radius=0.0075];
\draw[fill] (  0.01625, -0.02236) circle [radius=0.0075];
\draw[fill] (  0.01955, -0.01955) circle [radius=0.0075];
\draw[fill] (  0.02236, -0.01625) circle [radius=0.0075];
\draw[fill] (  0.02463, -0.01255) circle [radius=0.0075];
\draw[fill] (  0.02629, -0.00854) circle [radius=0.0075];
\draw[fill] (  0.02730, -0.00432) circle [radius=0.0075];
\draw[fill] (  0.05753,  0.00000) circle [radius=0.0075];
\draw[fill] (  0.05683,  0.00900) circle [radius=0.0075];
\draw[fill] (  0.05472,  0.01778) circle [radius=0.0075];
\draw[fill] (  0.05126,  0.02612) circle [radius=0.0075];
\draw[fill] (  0.04655,  0.03382) circle [radius=0.0075];
\draw[fill] (  0.04068,  0.04068) circle [radius=0.0075];
\draw[fill] (  0.03382,  0.04655) circle [radius=0.0075];
\draw[fill] (  0.02612,  0.05126) circle [radius=0.0075];
\draw[fill] (  0.01778,  0.05472) circle [radius=0.0075];
\draw[fill] (  0.00900,  0.05683) circle [radius=0.0075];
\draw[fill] (  0.00000,  0.05753) circle [radius=0.0075];
\draw[fill] ( -0.00900,  0.05683) circle [radius=0.0075];
\draw[fill] ( -0.01778,  0.05472) circle [radius=0.0075];
\draw[fill] ( -0.02612,  0.05126) circle [radius=0.0075];
\draw[fill] ( -0.03382,  0.04655) circle [radius=0.0075];
\draw[fill] ( -0.04068,  0.04068) circle [radius=0.0075];
\draw[fill] ( -0.04655,  0.03382) circle [radius=0.0075];
\draw[fill] ( -0.05126,  0.02612) circle [radius=0.0075];
\draw[fill] ( -0.05472,  0.01778) circle [radius=0.0075];
\draw[fill] ( -0.05683,  0.00900) circle [radius=0.0075];
\draw[fill] ( -0.05753,  0.00000) circle [radius=0.0075];
\draw[fill] ( -0.05683, -0.00900) circle [radius=0.0075];
\draw[fill] ( -0.05472, -0.01778) circle [radius=0.0075];
\draw[fill] ( -0.05126, -0.02612) circle [radius=0.0075];
\draw[fill] ( -0.04655, -0.03382) circle [radius=0.0075];
\draw[fill] ( -0.04068, -0.04068) circle [radius=0.0075];
\draw[fill] ( -0.03382, -0.04655) circle [radius=0.0075];
\draw[fill] ( -0.02612, -0.05126) circle [radius=0.0075];
\draw[fill] ( -0.01778, -0.05472) circle [radius=0.0075];
\draw[fill] ( -0.00900, -0.05683) circle [radius=0.0075];
\draw[fill] (  0.00000, -0.05753) circle [radius=0.0075];
\draw[fill] (  0.00900, -0.05683) circle [radius=0.0075];
\draw[fill] (  0.01778, -0.05472) circle [radius=0.0075];
\draw[fill] (  0.02612, -0.05126) circle [radius=0.0075];
\draw[fill] (  0.03382, -0.04655) circle [radius=0.0075];
\draw[fill] (  0.04068, -0.04068) circle [radius=0.0075];
\draw[fill] (  0.04655, -0.03382) circle [radius=0.0075];
\draw[fill] (  0.05126, -0.02612) circle [radius=0.0075];
\draw[fill] (  0.05472, -0.01778) circle [radius=0.0075];
\draw[fill] (  0.05683, -0.00900) circle [radius=0.0075];
\draw[fill] (  0.09730,  0.00000) circle [radius=0.0075];
\draw[fill] (  0.09611,  0.01522) circle [radius=0.0075];
\draw[fill] (  0.09254,  0.03007) circle [radius=0.0075];
\draw[fill] (  0.08670,  0.04418) circle [radius=0.0075];
\draw[fill] (  0.07872,  0.05719) circle [radius=0.0075];
\draw[fill] (  0.06880,  0.06880) circle [radius=0.0075];
\draw[fill] (  0.05719,  0.07872) circle [radius=0.0075];
\draw[fill] (  0.04418,  0.08670) circle [radius=0.0075];
\draw[fill] (  0.03007,  0.09254) circle [radius=0.0075];
\draw[fill] (  0.01522,  0.09611) circle [radius=0.0075];
\draw[fill] (  0.00000,  0.09730) circle [radius=0.0075];
\draw[fill] ( -0.01522,  0.09611) circle [radius=0.0075];
\draw[fill] ( -0.03007,  0.09254) circle [radius=0.0075];
\draw[fill] ( -0.04418,  0.08670) circle [radius=0.0075];
\draw[fill] ( -0.05719,  0.07872) circle [radius=0.0075];
\draw[fill] ( -0.06880,  0.06880) circle [radius=0.0075];
\draw[fill] ( -0.07872,  0.05719) circle [radius=0.0075];
\draw[fill] ( -0.08670,  0.04418) circle [radius=0.0075];
\draw[fill] ( -0.09254,  0.03007) circle [radius=0.0075];
\draw[fill] ( -0.09611,  0.01522) circle [radius=0.0075];
\draw[fill] ( -0.09730,  0.00000) circle [radius=0.0075];
\draw[fill] ( -0.09611, -0.01522) circle [radius=0.0075];
\draw[fill] ( -0.09254, -0.03007) circle [radius=0.0075];
\draw[fill] ( -0.08670, -0.04418) circle [radius=0.0075];
\draw[fill] ( -0.07872, -0.05719) circle [radius=0.0075];
\draw[fill] ( -0.06880, -0.06880) circle [radius=0.0075];
\draw[fill] ( -0.05719, -0.07872) circle [radius=0.0075];
\draw[fill] ( -0.04418, -0.08670) circle [radius=0.0075];
\draw[fill] ( -0.03007, -0.09254) circle [radius=0.0075];
\draw[fill] ( -0.01522, -0.09611) circle [radius=0.0075];
\draw[fill] (  0.00000, -0.09730) circle [radius=0.0075];
\draw[fill] (  0.01522, -0.09611) circle [radius=0.0075];
\draw[fill] (  0.03007, -0.09254) circle [radius=0.0075];
\draw[fill] (  0.04418, -0.08670) circle [radius=0.0075];
\draw[fill] (  0.05719, -0.07872) circle [radius=0.0075];
\draw[fill] (  0.06880, -0.06880) circle [radius=0.0075];
\draw[fill] (  0.07872, -0.05719) circle [radius=0.0075];
\draw[fill] (  0.08670, -0.04418) circle [radius=0.0075];
\draw[fill] (  0.09254, -0.03007) circle [radius=0.0075];
\draw[fill] (  0.09611, -0.01522) circle [radius=0.0075];
\draw[fill] (  0.14606,  0.00000) circle [radius=0.0075];
\draw[fill] (  0.14426,  0.02285) circle [radius=0.0075];
\draw[fill] (  0.13891,  0.04514) circle [radius=0.0075];
\draw[fill] (  0.13014,  0.06631) circle [radius=0.0075];
\draw[fill] (  0.11817,  0.08585) circle [radius=0.0075];
\draw[fill] (  0.10328,  0.10328) circle [radius=0.0075];
\draw[fill] (  0.08585,  0.11817) circle [radius=0.0075];
\draw[fill] (  0.06631,  0.13014) circle [radius=0.0075];
\draw[fill] (  0.04514,  0.13891) circle [radius=0.0075];
\draw[fill] (  0.02285,  0.14426) circle [radius=0.0075];
\draw[fill] (  0.00000,  0.14606) circle [radius=0.0075];
\draw[fill] ( -0.02285,  0.14426) circle [radius=0.0075];
\draw[fill] ( -0.04514,  0.13891) circle [radius=0.0075];
\draw[fill] ( -0.06631,  0.13014) circle [radius=0.0075];
\draw[fill] ( -0.08585,  0.11817) circle [radius=0.0075];
\draw[fill] ( -0.10328,  0.10328) circle [radius=0.0075];
\draw[fill] ( -0.11817,  0.08585) circle [radius=0.0075];
\draw[fill] ( -0.13014,  0.06631) circle [radius=0.0075];
\draw[fill] ( -0.13891,  0.04514) circle [radius=0.0075];
\draw[fill] ( -0.14426,  0.02285) circle [radius=0.0075];
\draw[fill] ( -0.14606,  0.00000) circle [radius=0.0075];
\draw[fill] ( -0.14426, -0.02285) circle [radius=0.0075];
\draw[fill] ( -0.13891, -0.04514) circle [radius=0.0075];
\draw[fill] ( -0.13014, -0.06631) circle [radius=0.0075];
\draw[fill] ( -0.11817, -0.08585) circle [radius=0.0075];
\draw[fill] ( -0.10328, -0.10328) circle [radius=0.0075];
\draw[fill] ( -0.08585, -0.11817) circle [radius=0.0075];
\draw[fill] ( -0.06631, -0.13014) circle [radius=0.0075];
\draw[fill] ( -0.04514, -0.13891) circle [radius=0.0075];
\draw[fill] ( -0.02285, -0.14426) circle [radius=0.0075];
\draw[fill] (  0.00000, -0.14606) circle [radius=0.0075];
\draw[fill] (  0.02285, -0.14426) circle [radius=0.0075];
\draw[fill] (  0.04514, -0.13891) circle [radius=0.0075];
\draw[fill] (  0.06631, -0.13014) circle [radius=0.0075];
\draw[fill] (  0.08585, -0.11817) circle [radius=0.0075];
\draw[fill] (  0.10328, -0.10328) circle [radius=0.0075];
\draw[fill] (  0.11817, -0.08585) circle [radius=0.0075];
\draw[fill] (  0.13014, -0.06631) circle [radius=0.0075];
\draw[fill] (  0.13891, -0.04514) circle [radius=0.0075];
\draw[fill] (  0.14426, -0.02285) circle [radius=0.0075];
\draw[fill] (  0.20272,  0.00000) circle [radius=0.0075];
\draw[fill] (  0.20023,  0.03171) circle [radius=0.0075];
\draw[fill] (  0.19280,  0.06264) circle [radius=0.0075];
\draw[fill] (  0.18063,  0.09203) circle [radius=0.0075];
\draw[fill] (  0.16401,  0.11916) circle [radius=0.0075];
\draw[fill] (  0.14335,  0.14335) circle [radius=0.0075];
\draw[fill] (  0.11916,  0.16401) circle [radius=0.0075];
\draw[fill] (  0.09203,  0.18063) circle [radius=0.0075];
\draw[fill] (  0.06264,  0.19280) circle [radius=0.0075];
\draw[fill] (  0.03171,  0.20023) circle [radius=0.0075];
\draw[fill] (  0.00000,  0.20272) circle [radius=0.0075];
\draw[fill] ( -0.03171,  0.20023) circle [radius=0.0075];
\draw[fill] ( -0.06264,  0.19280) circle [radius=0.0075];
\draw[fill] ( -0.09203,  0.18063) circle [radius=0.0075];
\draw[fill] ( -0.11916,  0.16401) circle [radius=0.0075];
\draw[fill] ( -0.14335,  0.14335) circle [radius=0.0075];
\draw[fill] ( -0.16401,  0.11916) circle [radius=0.0075];
\draw[fill] ( -0.18063,  0.09203) circle [radius=0.0075];
\draw[fill] ( -0.19280,  0.06264) circle [radius=0.0075];
\draw[fill] ( -0.20023,  0.03171) circle [radius=0.0075];
\draw[fill] ( -0.20272,  0.00000) circle [radius=0.0075];
\draw[fill] ( -0.20023, -0.03171) circle [radius=0.0075];
\draw[fill] ( -0.19280, -0.06264) circle [radius=0.0075];
\draw[fill] ( -0.18063, -0.09203) circle [radius=0.0075];
\draw[fill] ( -0.16401, -0.11916) circle [radius=0.0075];
\draw[fill] ( -0.14335, -0.14335) circle [radius=0.0075];
\draw[fill] ( -0.11916, -0.16401) circle [radius=0.0075];
\draw[fill] ( -0.09203, -0.18063) circle [radius=0.0075];
\draw[fill] ( -0.06264, -0.19280) circle [radius=0.0075];
\draw[fill] ( -0.03171, -0.20023) circle [radius=0.0075];
\draw[fill] (  0.00000, -0.20272) circle [radius=0.0075];
\draw[fill] (  0.03171, -0.20023) circle [radius=0.0075];
\draw[fill] (  0.06264, -0.19280) circle [radius=0.0075];
\draw[fill] (  0.09203, -0.18063) circle [radius=0.0075];
\draw[fill] (  0.11916, -0.16401) circle [radius=0.0075];
\draw[fill] (  0.14335, -0.14335) circle [radius=0.0075];
\draw[fill] (  0.16401, -0.11916) circle [radius=0.0075];
\draw[fill] (  0.18063, -0.09203) circle [radius=0.0075];
\draw[fill] (  0.19280, -0.06264) circle [radius=0.0075];
\draw[fill] (  0.20023, -0.03171) circle [radius=0.0075];
\draw[fill] (  0.26602,  0.00000) circle [radius=0.0075];
\draw[fill] (  0.26274,  0.04161) circle [radius=0.0075];
\draw[fill] (  0.25300,  0.08220) circle [radius=0.0075];
\draw[fill] (  0.23702,  0.12077) circle [radius=0.0075];
\draw[fill] (  0.21521,  0.15636) circle [radius=0.0075];
\draw[fill] (  0.18810,  0.18810) circle [radius=0.0075];
\draw[fill] (  0.15636,  0.21521) circle [radius=0.0075];
\draw[fill] (  0.12077,  0.23702) circle [radius=0.0075];
\draw[fill] (  0.08220,  0.25300) circle [radius=0.0075];
\draw[fill] (  0.04161,  0.26274) circle [radius=0.0075];
\draw[fill] (  0.00000,  0.26602) circle [radius=0.0075];
\draw[fill] ( -0.04161,  0.26274) circle [radius=0.0075];
\draw[fill] ( -0.08220,  0.25300) circle [radius=0.0075];
\draw[fill] ( -0.12077,  0.23702) circle [radius=0.0075];
\draw[fill] ( -0.15636,  0.21521) circle [radius=0.0075];
\draw[fill] ( -0.18810,  0.18810) circle [radius=0.0075];
\draw[fill] ( -0.21521,  0.15636) circle [radius=0.0075];
\draw[fill] ( -0.23702,  0.12077) circle [radius=0.0075];
\draw[fill] ( -0.25300,  0.08220) circle [radius=0.0075];
\draw[fill] ( -0.26274,  0.04161) circle [radius=0.0075];
\draw[fill] ( -0.26602,  0.00000) circle [radius=0.0075];
\draw[fill] ( -0.26274, -0.04161) circle [radius=0.0075];
\draw[fill] ( -0.25300, -0.08220) circle [radius=0.0075];
\draw[fill] ( -0.23702, -0.12077) circle [radius=0.0075];
\draw[fill] ( -0.21521, -0.15636) circle [radius=0.0075];
\draw[fill] ( -0.18810, -0.18810) circle [radius=0.0075];
\draw[fill] ( -0.15636, -0.21521) circle [radius=0.0075];
\draw[fill] ( -0.12077, -0.23702) circle [radius=0.0075];
\draw[fill] ( -0.08220, -0.25300) circle [radius=0.0075];
\draw[fill] ( -0.04161, -0.26274) circle [radius=0.0075];
\draw[fill] (  0.00000, -0.26602) circle [radius=0.0075];
\draw[fill] (  0.04161, -0.26274) circle [radius=0.0075];
\draw[fill] (  0.08220, -0.25300) circle [radius=0.0075];
\draw[fill] (  0.12077, -0.23702) circle [radius=0.0075];
\draw[fill] (  0.15636, -0.21521) circle [radius=0.0075];
\draw[fill] (  0.18810, -0.18810) circle [radius=0.0075];
\draw[fill] (  0.21521, -0.15636) circle [radius=0.0075];
\draw[fill] (  0.23702, -0.12077) circle [radius=0.0075];
\draw[fill] (  0.25300, -0.08220) circle [radius=0.0075];
\draw[fill] (  0.26274, -0.04161) circle [radius=0.0075];
\draw[fill] (  0.33453,  0.00000) circle [radius=0.0075];
\draw[fill] (  0.33041,  0.05233) circle [radius=0.0075];
\draw[fill] (  0.31816,  0.10338) circle [radius=0.0075];
\draw[fill] (  0.29807,  0.15187) circle [radius=0.0075];
\draw[fill] (  0.27064,  0.19663) circle [radius=0.0075];
\draw[fill] (  0.23655,  0.23655) circle [radius=0.0075];
\draw[fill] (  0.19663,  0.27064) circle [radius=0.0075];
\draw[fill] (  0.15187,  0.29807) circle [radius=0.0075];
\draw[fill] (  0.10338,  0.31816) circle [radius=0.0075];
\draw[fill] (  0.05233,  0.33041) circle [radius=0.0075];
\draw[fill] (  0.00000,  0.33453) circle [radius=0.0075];
\draw[fill] ( -0.05233,  0.33041) circle [radius=0.0075];
\draw[fill] ( -0.10338,  0.31816) circle [radius=0.0075];
\draw[fill] ( -0.15187,  0.29807) circle [radius=0.0075];
\draw[fill] ( -0.19663,  0.27064) circle [radius=0.0075];
\draw[fill] ( -0.23655,  0.23655) circle [radius=0.0075];
\draw[fill] ( -0.27064,  0.19663) circle [radius=0.0075];
\draw[fill] ( -0.29807,  0.15187) circle [radius=0.0075];
\draw[fill] ( -0.31816,  0.10338) circle [radius=0.0075];
\draw[fill] ( -0.33041,  0.05233) circle [radius=0.0075];
\draw[fill] ( -0.33453,  0.00000) circle [radius=0.0075];
\draw[fill] ( -0.33041, -0.05233) circle [radius=0.0075];
\draw[fill] ( -0.31816, -0.10338) circle [radius=0.0075];
\draw[fill] ( -0.29807, -0.15187) circle [radius=0.0075];
\draw[fill] ( -0.27064, -0.19663) circle [radius=0.0075];
\draw[fill] ( -0.23655, -0.23655) circle [radius=0.0075];
\draw[fill] ( -0.19663, -0.27064) circle [radius=0.0075];
\draw[fill] ( -0.15187, -0.29807) circle [radius=0.0075];
\draw[fill] ( -0.10338, -0.31816) circle [radius=0.0075];
\draw[fill] ( -0.05233, -0.33041) circle [radius=0.0075];
\draw[fill] (  0.00000, -0.33453) circle [radius=0.0075];
\draw[fill] (  0.05233, -0.33041) circle [radius=0.0075];
\draw[fill] (  0.10338, -0.31816) circle [radius=0.0075];
\draw[fill] (  0.15187, -0.29807) circle [radius=0.0075];
\draw[fill] (  0.19663, -0.27064) circle [radius=0.0075];
\draw[fill] (  0.23655, -0.23655) circle [radius=0.0075];
\draw[fill] (  0.27064, -0.19663) circle [radius=0.0075];
\draw[fill] (  0.29807, -0.15187) circle [radius=0.0075];
\draw[fill] (  0.31816, -0.10338) circle [radius=0.0075];
\draw[fill] (  0.33041, -0.05233) circle [radius=0.0075];
\draw[fill] (  0.40673,  0.00000) circle [radius=0.0075];
\draw[fill] (  0.40173,  0.06363) circle [radius=0.0075];
\draw[fill] (  0.38683,  0.12569) circle [radius=0.0075];
\draw[fill] (  0.36240,  0.18465) circle [radius=0.0075];
\draw[fill] (  0.32906,  0.23907) circle [radius=0.0075];
\draw[fill] (  0.28760,  0.28760) circle [radius=0.0075];
\draw[fill] (  0.23907,  0.32906) circle [radius=0.0075];
\draw[fill] (  0.18465,  0.36240) circle [radius=0.0075];
\draw[fill] (  0.12569,  0.38683) circle [radius=0.0075];
\draw[fill] (  0.06363,  0.40173) circle [radius=0.0075];
\draw[fill] (  0.00000,  0.40673) circle [radius=0.0075];
\draw[fill] ( -0.06363,  0.40173) circle [radius=0.0075];
\draw[fill] ( -0.12569,  0.38683) circle [radius=0.0075];
\draw[fill] ( -0.18465,  0.36240) circle [radius=0.0075];
\draw[fill] ( -0.23907,  0.32906) circle [radius=0.0075];
\draw[fill] ( -0.28760,  0.28760) circle [radius=0.0075];
\draw[fill] ( -0.32906,  0.23907) circle [radius=0.0075];
\draw[fill] ( -0.36240,  0.18465) circle [radius=0.0075];
\draw[fill] ( -0.38683,  0.12569) circle [radius=0.0075];
\draw[fill] ( -0.40173,  0.06363) circle [radius=0.0075];
\draw[fill] ( -0.40673,  0.00000) circle [radius=0.0075];
\draw[fill] ( -0.40173, -0.06363) circle [radius=0.0075];
\draw[fill] ( -0.38683, -0.12569) circle [radius=0.0075];
\draw[fill] ( -0.36240, -0.18465) circle [radius=0.0075];
\draw[fill] ( -0.32906, -0.23907) circle [radius=0.0075];
\draw[fill] ( -0.28760, -0.28760) circle [radius=0.0075];
\draw[fill] ( -0.23907, -0.32906) circle [radius=0.0075];
\draw[fill] ( -0.18465, -0.36240) circle [radius=0.0075];
\draw[fill] ( -0.12569, -0.38683) circle [radius=0.0075];
\draw[fill] ( -0.06363, -0.40173) circle [radius=0.0075];
\draw[fill] (  0.00000, -0.40673) circle [radius=0.0075];
\draw[fill] (  0.06363, -0.40173) circle [radius=0.0075];
\draw[fill] (  0.12569, -0.38683) circle [radius=0.0075];
\draw[fill] (  0.18465, -0.36240) circle [radius=0.0075];
\draw[fill] (  0.23907, -0.32906) circle [radius=0.0075];
\draw[fill] (  0.28760, -0.28760) circle [radius=0.0075];
\draw[fill] (  0.32906, -0.23907) circle [radius=0.0075];
\draw[fill] (  0.36240, -0.18465) circle [radius=0.0075];
\draw[fill] (  0.38683, -0.12569) circle [radius=0.0075];
\draw[fill] (  0.40173, -0.06363) circle [radius=0.0075];
\draw[fill] (  0.48102,  0.00000) circle [radius=0.0075];
\draw[fill] (  0.47509,  0.07525) circle [radius=0.0075];
\draw[fill] (  0.45747,  0.14864) circle [radius=0.0075];
\draw[fill] (  0.42859,  0.21838) circle [radius=0.0075];
\draw[fill] (  0.38915,  0.28273) circle [radius=0.0075];
\draw[fill] (  0.34013,  0.34013) circle [radius=0.0075];
\draw[fill] (  0.28273,  0.38915) circle [radius=0.0075];
\draw[fill] (  0.21838,  0.42859) circle [radius=0.0075];
\draw[fill] (  0.14864,  0.45747) circle [radius=0.0075];
\draw[fill] (  0.07525,  0.47509) circle [radius=0.0075];
\draw[fill] (  0.00000,  0.48102) circle [radius=0.0075];
\draw[fill] ( -0.07525,  0.47509) circle [radius=0.0075];
\draw[fill] ( -0.14864,  0.45747) circle [radius=0.0075];
\draw[fill] ( -0.21838,  0.42859) circle [radius=0.0075];
\draw[fill] ( -0.28273,  0.38915) circle [radius=0.0075];
\draw[fill] ( -0.34013,  0.34013) circle [radius=0.0075];
\draw[fill] ( -0.38915,  0.28273) circle [radius=0.0075];
\draw[fill] ( -0.42859,  0.21838) circle [radius=0.0075];
\draw[fill] ( -0.45747,  0.14864) circle [radius=0.0075];
\draw[fill] ( -0.47509,  0.07525) circle [radius=0.0075];
\draw[fill] ( -0.48102,  0.00000) circle [radius=0.0075];
\draw[fill] ( -0.47509, -0.07525) circle [radius=0.0075];
\draw[fill] ( -0.45747, -0.14864) circle [radius=0.0075];
\draw[fill] ( -0.42859, -0.21838) circle [radius=0.0075];
\draw[fill] ( -0.38915, -0.28273) circle [radius=0.0075];
\draw[fill] ( -0.34013, -0.34013) circle [radius=0.0075];
\draw[fill] ( -0.28273, -0.38915) circle [radius=0.0075];
\draw[fill] ( -0.21838, -0.42859) circle [radius=0.0075];
\draw[fill] ( -0.14864, -0.45747) circle [radius=0.0075];
\draw[fill] ( -0.07525, -0.47509) circle [radius=0.0075];
\draw[fill] (  0.00000, -0.48102) circle [radius=0.0075];
\draw[fill] (  0.07525, -0.47509) circle [radius=0.0075];
\draw[fill] (  0.14864, -0.45747) circle [radius=0.0075];
\draw[fill] (  0.21838, -0.42859) circle [radius=0.0075];
\draw[fill] (  0.28273, -0.38915) circle [radius=0.0075];
\draw[fill] (  0.34013, -0.34013) circle [radius=0.0075];
\draw[fill] (  0.38915, -0.28273) circle [radius=0.0075];
\draw[fill] (  0.42859, -0.21838) circle [radius=0.0075];
\draw[fill] (  0.45747, -0.14864) circle [radius=0.0075];
\draw[fill] (  0.47509, -0.07525) circle [radius=0.0075];
\draw[fill] (  0.55571,  0.00000) circle [radius=0.0075];
\draw[fill] (  0.54887,  0.08693) circle [radius=0.0075];
\draw[fill] (  0.52852,  0.17173) circle [radius=0.0075];
\draw[fill] (  0.49515,  0.25229) circle [radius=0.0075];
\draw[fill] (  0.44958,  0.32664) circle [radius=0.0075];
\draw[fill] (  0.39295,  0.39295) circle [radius=0.0075];
\draw[fill] (  0.32664,  0.44958) circle [radius=0.0075];
\draw[fill] (  0.25229,  0.49515) circle [radius=0.0075];
\draw[fill] (  0.17173,  0.52852) circle [radius=0.0075];
\draw[fill] (  0.08693,  0.54887) circle [radius=0.0075];
\draw[fill] (  0.00000,  0.55571) circle [radius=0.0075];
\draw[fill] ( -0.08693,  0.54887) circle [radius=0.0075];
\draw[fill] ( -0.17173,  0.52852) circle [radius=0.0075];
\draw[fill] ( -0.25229,  0.49515) circle [radius=0.0075];
\draw[fill] ( -0.32664,  0.44958) circle [radius=0.0075];
\draw[fill] ( -0.39295,  0.39295) circle [radius=0.0075];
\draw[fill] ( -0.44958,  0.32664) circle [radius=0.0075];
\draw[fill] ( -0.49515,  0.25229) circle [radius=0.0075];
\draw[fill] ( -0.52852,  0.17173) circle [radius=0.0075];
\draw[fill] ( -0.54887,  0.08693) circle [radius=0.0075];
\draw[fill] ( -0.55571,  0.00000) circle [radius=0.0075];
\draw[fill] ( -0.54887, -0.08693) circle [radius=0.0075];
\draw[fill] ( -0.52852, -0.17173) circle [radius=0.0075];
\draw[fill] ( -0.49515, -0.25229) circle [radius=0.0075];
\draw[fill] ( -0.44958, -0.32664) circle [radius=0.0075];
\draw[fill] ( -0.39295, -0.39295) circle [radius=0.0075];
\draw[fill] ( -0.32664, -0.44958) circle [radius=0.0075];
\draw[fill] ( -0.25229, -0.49515) circle [radius=0.0075];
\draw[fill] ( -0.17173, -0.52852) circle [radius=0.0075];
\draw[fill] ( -0.08693, -0.54887) circle [radius=0.0075];
\draw[fill] (  0.00000, -0.55571) circle [radius=0.0075];
\draw[fill] (  0.08693, -0.54887) circle [radius=0.0075];
\draw[fill] (  0.17173, -0.52852) circle [radius=0.0075];
\draw[fill] (  0.25229, -0.49515) circle [radius=0.0075];
\draw[fill] (  0.32664, -0.44958) circle [radius=0.0075];
\draw[fill] (  0.39295, -0.39295) circle [radius=0.0075];
\draw[fill] (  0.44958, -0.32664) circle [radius=0.0075];
\draw[fill] (  0.49515, -0.25229) circle [radius=0.0075];
\draw[fill] (  0.52852, -0.17173) circle [radius=0.0075];
\draw[fill] (  0.54887, -0.08693) circle [radius=0.0075];
\draw[fill] (  0.62916,  0.00000) circle [radius=0.0075];
\draw[fill] (  0.62142,  0.09842) circle [radius=0.0075];
\draw[fill] (  0.59837,  0.19442) circle [radius=0.0075];
\draw[fill] (  0.56059,  0.28563) circle [radius=0.0075];
\draw[fill] (  0.50900,  0.36981) circle [radius=0.0075];
\draw[fill] (  0.44489,  0.44489) circle [radius=0.0075];
\draw[fill] (  0.36981,  0.50900) circle [radius=0.0075];
\draw[fill] (  0.28563,  0.56059) circle [radius=0.0075];
\draw[fill] (  0.19442,  0.59837) circle [radius=0.0075];
\draw[fill] (  0.09842,  0.62142) circle [radius=0.0075];
\draw[fill] (  0.00000,  0.62916) circle [radius=0.0075];
\draw[fill] ( -0.09842,  0.62142) circle [radius=0.0075];
\draw[fill] ( -0.19442,  0.59837) circle [radius=0.0075];
\draw[fill] ( -0.28563,  0.56059) circle [radius=0.0075];
\draw[fill] ( -0.36981,  0.50900) circle [radius=0.0075];
\draw[fill] ( -0.44489,  0.44489) circle [radius=0.0075];
\draw[fill] ( -0.50900,  0.36981) circle [radius=0.0075];
\draw[fill] ( -0.56059,  0.28563) circle [radius=0.0075];
\draw[fill] ( -0.59837,  0.19442) circle [radius=0.0075];
\draw[fill] ( -0.62142,  0.09842) circle [radius=0.0075];
\draw[fill] ( -0.62916,  0.00000) circle [radius=0.0075];
\draw[fill] ( -0.62142, -0.09842) circle [radius=0.0075];
\draw[fill] ( -0.59837, -0.19442) circle [radius=0.0075];
\draw[fill] ( -0.56059, -0.28563) circle [radius=0.0075];
\draw[fill] ( -0.50900, -0.36981) circle [radius=0.0075];
\draw[fill] ( -0.44489, -0.44489) circle [radius=0.0075];
\draw[fill] ( -0.36981, -0.50900) circle [radius=0.0075];
\draw[fill] ( -0.28563, -0.56059) circle [radius=0.0075];
\draw[fill] ( -0.19442, -0.59837) circle [radius=0.0075];
\draw[fill] ( -0.09842, -0.62142) circle [radius=0.0075];
\draw[fill] (  0.00000, -0.62916) circle [radius=0.0075];
\draw[fill] (  0.09842, -0.62142) circle [radius=0.0075];
\draw[fill] (  0.19442, -0.59837) circle [radius=0.0075];
\draw[fill] (  0.28563, -0.56059) circle [radius=0.0075];
\draw[fill] (  0.36981, -0.50900) circle [radius=0.0075];
\draw[fill] (  0.44489, -0.44489) circle [radius=0.0075];
\draw[fill] (  0.50900, -0.36981) circle [radius=0.0075];
\draw[fill] (  0.56059, -0.28563) circle [radius=0.0075];
\draw[fill] (  0.59837, -0.19442) circle [radius=0.0075];
\draw[fill] (  0.62142, -0.09842) circle [radius=0.0075];
\draw[fill] (  0.69972,  0.00000) circle [radius=0.0075];
\draw[fill] (  0.69110,  0.10946) circle [radius=0.0075];
\draw[fill] (  0.66547,  0.21623) circle [radius=0.0075];
\draw[fill] (  0.62345,  0.31767) circle [radius=0.0075];
\draw[fill] (  0.56608,  0.41128) circle [radius=0.0075];
\draw[fill] (  0.49478,  0.49478) circle [radius=0.0075];
\draw[fill] (  0.41128,  0.56608) circle [radius=0.0075];
\draw[fill] (  0.31767,  0.62345) circle [radius=0.0075];
\draw[fill] (  0.21623,  0.66547) circle [radius=0.0075];
\draw[fill] (  0.10946,  0.69110) circle [radius=0.0075];
\draw[fill] (  0.00000,  0.69972) circle [radius=0.0075];
\draw[fill] ( -0.10946,  0.69110) circle [radius=0.0075];
\draw[fill] ( -0.21623,  0.66547) circle [radius=0.0075];
\draw[fill] ( -0.31767,  0.62345) circle [radius=0.0075];
\draw[fill] ( -0.41128,  0.56608) circle [radius=0.0075];
\draw[fill] ( -0.49478,  0.49478) circle [radius=0.0075];
\draw[fill] ( -0.56608,  0.41128) circle [radius=0.0075];
\draw[fill] ( -0.62345,  0.31767) circle [radius=0.0075];
\draw[fill] ( -0.66547,  0.21623) circle [radius=0.0075];
\draw[fill] ( -0.69110,  0.10946) circle [radius=0.0075];
\draw[fill] ( -0.69972,  0.00000) circle [radius=0.0075];
\draw[fill] ( -0.69110, -0.10946) circle [radius=0.0075];
\draw[fill] ( -0.66547, -0.21623) circle [radius=0.0075];
\draw[fill] ( -0.62345, -0.31767) circle [radius=0.0075];
\draw[fill] ( -0.56608, -0.41128) circle [radius=0.0075];
\draw[fill] ( -0.49478, -0.49478) circle [radius=0.0075];
\draw[fill] ( -0.41128, -0.56608) circle [radius=0.0075];
\draw[fill] ( -0.31767, -0.62345) circle [radius=0.0075];
\draw[fill] ( -0.21623, -0.66547) circle [radius=0.0075];
\draw[fill] ( -0.10946, -0.69110) circle [radius=0.0075];
\draw[fill] (  0.00000, -0.69972) circle [radius=0.0075];
\draw[fill] (  0.10946, -0.69110) circle [radius=0.0075];
\draw[fill] (  0.21623, -0.66547) circle [radius=0.0075];
\draw[fill] (  0.31767, -0.62345) circle [radius=0.0075];
\draw[fill] (  0.41128, -0.56608) circle [radius=0.0075];
\draw[fill] (  0.49478, -0.49478) circle [radius=0.0075];
\draw[fill] (  0.56608, -0.41128) circle [radius=0.0075];
\draw[fill] (  0.62345, -0.31767) circle [radius=0.0075];
\draw[fill] (  0.66547, -0.21623) circle [radius=0.0075];
\draw[fill] (  0.69110, -0.10946) circle [radius=0.0075];
\draw[fill] (  0.76581,  0.00000) circle [radius=0.0075];
\draw[fill] (  0.75638,  0.11980) circle [radius=0.0075];
\draw[fill] (  0.72833,  0.23665) circle [radius=0.0075];
\draw[fill] (  0.68234,  0.34767) circle [radius=0.0075];
\draw[fill] (  0.61955,  0.45013) circle [radius=0.0075];
\draw[fill] (  0.54151,  0.54151) circle [radius=0.0075];
\draw[fill] (  0.45013,  0.61955) circle [radius=0.0075];
\draw[fill] (  0.34767,  0.68234) circle [radius=0.0075];
\draw[fill] (  0.23665,  0.72833) circle [radius=0.0075];
\draw[fill] (  0.11980,  0.75638) circle [radius=0.0075];
\draw[fill] (  0.00000,  0.76581) circle [radius=0.0075];
\draw[fill] ( -0.11980,  0.75638) circle [radius=0.0075];
\draw[fill] ( -0.23665,  0.72833) circle [radius=0.0075];
\draw[fill] ( -0.34767,  0.68234) circle [radius=0.0075];
\draw[fill] ( -0.45013,  0.61955) circle [radius=0.0075];
\draw[fill] ( -0.54151,  0.54151) circle [radius=0.0075];
\draw[fill] ( -0.61955,  0.45013) circle [radius=0.0075];
\draw[fill] ( -0.68234,  0.34767) circle [radius=0.0075];
\draw[fill] ( -0.72833,  0.23665) circle [radius=0.0075];
\draw[fill] ( -0.75638,  0.11980) circle [radius=0.0075];
\draw[fill] ( -0.76581,  0.00000) circle [radius=0.0075];
\draw[fill] ( -0.75638, -0.11980) circle [radius=0.0075];
\draw[fill] ( -0.72833, -0.23665) circle [radius=0.0075];
\draw[fill] ( -0.68234, -0.34767) circle [radius=0.0075];
\draw[fill] ( -0.61955, -0.45013) circle [radius=0.0075];
\draw[fill] ( -0.54151, -0.54151) circle [radius=0.0075];
\draw[fill] ( -0.45013, -0.61955) circle [radius=0.0075];
\draw[fill] ( -0.34767, -0.68234) circle [radius=0.0075];
\draw[fill] ( -0.23665, -0.72833) circle [radius=0.0075];
\draw[fill] ( -0.11980, -0.75638) circle [radius=0.0075];
\draw[fill] (  0.00000, -0.76581) circle [radius=0.0075];
\draw[fill] (  0.11980, -0.75638) circle [radius=0.0075];
\draw[fill] (  0.23665, -0.72833) circle [radius=0.0075];
\draw[fill] (  0.34767, -0.68234) circle [radius=0.0075];
\draw[fill] (  0.45013, -0.61955) circle [radius=0.0075];
\draw[fill] (  0.54151, -0.54151) circle [radius=0.0075];
\draw[fill] (  0.61955, -0.45013) circle [radius=0.0075];
\draw[fill] (  0.68234, -0.34767) circle [radius=0.0075];
\draw[fill] (  0.72833, -0.23665) circle [radius=0.0075];
\draw[fill] (  0.75638, -0.11980) circle [radius=0.0075];
\draw[fill] (  0.82595,  0.00000) circle [radius=0.0075];
\draw[fill] (  0.81578,  0.12921) circle [radius=0.0075];
\draw[fill] (  0.78553,  0.25523) circle [radius=0.0075];
\draw[fill] (  0.73593,  0.37497) circle [radius=0.0075];
\draw[fill] (  0.66821,  0.48548) circle [radius=0.0075];
\draw[fill] (  0.58404,  0.58404) circle [radius=0.0075];
\draw[fill] (  0.48548,  0.66821) circle [radius=0.0075];
\draw[fill] (  0.37497,  0.73593) circle [radius=0.0075];
\draw[fill] (  0.25523,  0.78553) circle [radius=0.0075];
\draw[fill] (  0.12921,  0.81578) circle [radius=0.0075];
\draw[fill] (  0.00000,  0.82595) circle [radius=0.0075];
\draw[fill] ( -0.12921,  0.81578) circle [radius=0.0075];
\draw[fill] ( -0.25523,  0.78553) circle [radius=0.0075];
\draw[fill] ( -0.37497,  0.73593) circle [radius=0.0075];
\draw[fill] ( -0.48548,  0.66821) circle [radius=0.0075];
\draw[fill] ( -0.58404,  0.58404) circle [radius=0.0075];
\draw[fill] ( -0.66821,  0.48548) circle [radius=0.0075];
\draw[fill] ( -0.73593,  0.37497) circle [radius=0.0075];
\draw[fill] ( -0.78553,  0.25523) circle [radius=0.0075];
\draw[fill] ( -0.81578,  0.12921) circle [radius=0.0075];
\draw[fill] ( -0.82595,  0.00000) circle [radius=0.0075];
\draw[fill] ( -0.81578, -0.12921) circle [radius=0.0075];
\draw[fill] ( -0.78553, -0.25523) circle [radius=0.0075];
\draw[fill] ( -0.73593, -0.37497) circle [radius=0.0075];
\draw[fill] ( -0.66821, -0.48548) circle [radius=0.0075];
\draw[fill] ( -0.58404, -0.58404) circle [radius=0.0075];
\draw[fill] ( -0.48548, -0.66821) circle [radius=0.0075];
\draw[fill] ( -0.37497, -0.73593) circle [radius=0.0075];
\draw[fill] ( -0.25523, -0.78553) circle [radius=0.0075];
\draw[fill] ( -0.12921, -0.81578) circle [radius=0.0075];
\draw[fill] (  0.00000, -0.82595) circle [radius=0.0075];
\draw[fill] (  0.12921, -0.81578) circle [radius=0.0075];
\draw[fill] (  0.25523, -0.78553) circle [radius=0.0075];
\draw[fill] (  0.37497, -0.73593) circle [radius=0.0075];
\draw[fill] (  0.48548, -0.66821) circle [radius=0.0075];
\draw[fill] (  0.58404, -0.58404) circle [radius=0.0075];
\draw[fill] (  0.66821, -0.48548) circle [radius=0.0075];
\draw[fill] (  0.73593, -0.37497) circle [radius=0.0075];
\draw[fill] (  0.78553, -0.25523) circle [radius=0.0075];
\draw[fill] (  0.81578, -0.12921) circle [radius=0.0075];
\draw[fill] (  0.87881,  0.00000) circle [radius=0.0075];
\draw[fill] (  0.86799,  0.13748) circle [radius=0.0075];
\draw[fill] (  0.83580,  0.27157) circle [radius=0.0075];
\draw[fill] (  0.78303,  0.39897) circle [radius=0.0075];
\draw[fill] (  0.71097,  0.51655) circle [radius=0.0075];
\draw[fill] (  0.62141,  0.62141) circle [radius=0.0075];
\draw[fill] (  0.51655,  0.71097) circle [radius=0.0075];
\draw[fill] (  0.39897,  0.78303) circle [radius=0.0075];
\draw[fill] (  0.27157,  0.83580) circle [radius=0.0075];
\draw[fill] (  0.13748,  0.86799) circle [radius=0.0075];
\draw[fill] (  0.00000,  0.87881) circle [radius=0.0075];
\draw[fill] ( -0.13748,  0.86799) circle [radius=0.0075];
\draw[fill] ( -0.27157,  0.83580) circle [radius=0.0075];
\draw[fill] ( -0.39897,  0.78303) circle [radius=0.0075];
\draw[fill] ( -0.51655,  0.71097) circle [radius=0.0075];
\draw[fill] ( -0.62141,  0.62141) circle [radius=0.0075];
\draw[fill] ( -0.71097,  0.51655) circle [radius=0.0075];
\draw[fill] ( -0.78303,  0.39897) circle [radius=0.0075];
\draw[fill] ( -0.83580,  0.27157) circle [radius=0.0075];
\draw[fill] ( -0.86799,  0.13748) circle [radius=0.0075];
\draw[fill] ( -0.87881,  0.00000) circle [radius=0.0075];
\draw[fill] ( -0.86799, -0.13748) circle [radius=0.0075];
\draw[fill] ( -0.83580, -0.27157) circle [radius=0.0075];
\draw[fill] ( -0.78303, -0.39897) circle [radius=0.0075];
\draw[fill] ( -0.71097, -0.51655) circle [radius=0.0075];
\draw[fill] ( -0.62141, -0.62141) circle [radius=0.0075];
\draw[fill] ( -0.51655, -0.71097) circle [radius=0.0075];
\draw[fill] ( -0.39897, -0.78303) circle [radius=0.0075];
\draw[fill] ( -0.27157, -0.83580) circle [radius=0.0075];
\draw[fill] ( -0.13748, -0.86799) circle [radius=0.0075];
\draw[fill] (  0.00000, -0.87881) circle [radius=0.0075];
\draw[fill] (  0.13748, -0.86799) circle [radius=0.0075];
\draw[fill] (  0.27157, -0.83580) circle [radius=0.0075];
\draw[fill] (  0.39897, -0.78303) circle [radius=0.0075];
\draw[fill] (  0.51655, -0.71097) circle [radius=0.0075];
\draw[fill] (  0.62141, -0.62141) circle [radius=0.0075];
\draw[fill] (  0.71097, -0.51655) circle [radius=0.0075];
\draw[fill] (  0.78303, -0.39897) circle [radius=0.0075];
\draw[fill] (  0.83580, -0.27157) circle [radius=0.0075];
\draw[fill] (  0.86799, -0.13748) circle [radius=0.0075];
\draw[fill] (  0.92320,  0.00000) circle [radius=0.0075];
\draw[fill] (  0.91183,  0.14442) circle [radius=0.0075];
\draw[fill] (  0.87801,  0.28528) circle [radius=0.0075];
\draw[fill] (  0.82258,  0.41912) circle [radius=0.0075];
\draw[fill] (  0.74688,  0.54264) circle [radius=0.0075];
\draw[fill] (  0.65280,  0.65280) circle [radius=0.0075];
\draw[fill] (  0.54264,  0.74688) circle [radius=0.0075];
\draw[fill] (  0.41912,  0.82258) circle [radius=0.0075];
\draw[fill] (  0.28528,  0.87801) circle [radius=0.0075];
\draw[fill] (  0.14442,  0.91183) circle [radius=0.0075];
\draw[fill] (  0.00000,  0.92320) circle [radius=0.0075];
\draw[fill] ( -0.14442,  0.91183) circle [radius=0.0075];
\draw[fill] ( -0.28528,  0.87801) circle [radius=0.0075];
\draw[fill] ( -0.41912,  0.82258) circle [radius=0.0075];
\draw[fill] ( -0.54264,  0.74688) circle [radius=0.0075];
\draw[fill] ( -0.65280,  0.65280) circle [radius=0.0075];
\draw[fill] ( -0.74688,  0.54264) circle [radius=0.0075];
\draw[fill] ( -0.82258,  0.41912) circle [radius=0.0075];
\draw[fill] ( -0.87801,  0.28528) circle [radius=0.0075];
\draw[fill] ( -0.91183,  0.14442) circle [radius=0.0075];
\draw[fill] ( -0.92320,  0.00000) circle [radius=0.0075];
\draw[fill] ( -0.91183, -0.14442) circle [radius=0.0075];
\draw[fill] ( -0.87801, -0.28528) circle [radius=0.0075];
\draw[fill] ( -0.82258, -0.41912) circle [radius=0.0075];
\draw[fill] ( -0.74688, -0.54264) circle [radius=0.0075];
\draw[fill] ( -0.65280, -0.65280) circle [radius=0.0075];
\draw[fill] ( -0.54264, -0.74688) circle [radius=0.0075];
\draw[fill] ( -0.41912, -0.82258) circle [radius=0.0075];
\draw[fill] ( -0.28528, -0.87801) circle [radius=0.0075];
\draw[fill] ( -0.14442, -0.91183) circle [radius=0.0075];
\draw[fill] (  0.00000, -0.92320) circle [radius=0.0075];
\draw[fill] (  0.14442, -0.91183) circle [radius=0.0075];
\draw[fill] (  0.28528, -0.87801) circle [radius=0.0075];
\draw[fill] (  0.41912, -0.82258) circle [radius=0.0075];
\draw[fill] (  0.54264, -0.74688) circle [radius=0.0075];
\draw[fill] (  0.65280, -0.65280) circle [radius=0.0075];
\draw[fill] (  0.74688, -0.54264) circle [radius=0.0075];
\draw[fill] (  0.82258, -0.41912) circle [radius=0.0075];
\draw[fill] (  0.87801, -0.28528) circle [radius=0.0075];
\draw[fill] (  0.91183, -0.14442) circle [radius=0.0075];
\draw[fill] (  0.95813,  0.00000) circle [radius=0.0075];
\draw[fill] (  0.94633,  0.14988) circle [radius=0.0075];
\draw[fill] (  0.91123,  0.29608) circle [radius=0.0075];
\draw[fill] (  0.85370,  0.43498) circle [radius=0.0075];
\draw[fill] (  0.77514,  0.56317) circle [radius=0.0075];
\draw[fill] (  0.67750,  0.67750) circle [radius=0.0075];
\draw[fill] (  0.56317,  0.77514) circle [radius=0.0075];
\draw[fill] (  0.43498,  0.85370) circle [radius=0.0075];
\draw[fill] (  0.29608,  0.91123) circle [radius=0.0075];
\draw[fill] (  0.14988,  0.94633) circle [radius=0.0075];
\draw[fill] (  0.00000,  0.95813) circle [radius=0.0075];
\draw[fill] ( -0.14988,  0.94633) circle [radius=0.0075];
\draw[fill] ( -0.29608,  0.91123) circle [radius=0.0075];
\draw[fill] ( -0.43498,  0.85370) circle [radius=0.0075];
\draw[fill] ( -0.56317,  0.77514) circle [radius=0.0075];
\draw[fill] ( -0.67750,  0.67750) circle [radius=0.0075];
\draw[fill] ( -0.77514,  0.56317) circle [radius=0.0075];
\draw[fill] ( -0.85370,  0.43498) circle [radius=0.0075];
\draw[fill] ( -0.91123,  0.29608) circle [radius=0.0075];
\draw[fill] ( -0.94633,  0.14988) circle [radius=0.0075];
\draw[fill] ( -0.95813,  0.00000) circle [radius=0.0075];
\draw[fill] ( -0.94633, -0.14988) circle [radius=0.0075];
\draw[fill] ( -0.91123, -0.29608) circle [radius=0.0075];
\draw[fill] ( -0.85370, -0.43498) circle [radius=0.0075];
\draw[fill] ( -0.77514, -0.56317) circle [radius=0.0075];
\draw[fill] ( -0.67750, -0.67750) circle [radius=0.0075];
\draw[fill] ( -0.56317, -0.77514) circle [radius=0.0075];
\draw[fill] ( -0.43498, -0.85370) circle [radius=0.0075];
\draw[fill] ( -0.29608, -0.91123) circle [radius=0.0075];
\draw[fill] ( -0.14988, -0.94633) circle [radius=0.0075];
\draw[fill] (  0.00000, -0.95813) circle [radius=0.0075];
\draw[fill] (  0.14988, -0.94633) circle [radius=0.0075];
\draw[fill] (  0.29608, -0.91123) circle [radius=0.0075];
\draw[fill] (  0.43498, -0.85370) circle [radius=0.0075];
\draw[fill] (  0.56317, -0.77514) circle [radius=0.0075];
\draw[fill] (  0.67750, -0.67750) circle [radius=0.0075];
\draw[fill] (  0.77514, -0.56317) circle [radius=0.0075];
\draw[fill] (  0.85370, -0.43498) circle [radius=0.0075];
\draw[fill] (  0.91123, -0.29608) circle [radius=0.0075];
\draw[fill] (  0.94633, -0.14988) circle [radius=0.0075];
\draw[fill] (  0.98282,  0.00000) circle [radius=0.0075];
\draw[fill] (  0.97072,  0.15375) circle [radius=0.0075];
\draw[fill] (  0.93472,  0.30371) circle [radius=0.0075];
\draw[fill] (  0.87570,  0.44619) circle [radius=0.0075];
\draw[fill] (  0.79512,  0.57769) circle [radius=0.0075];
\draw[fill] (  0.69496,  0.69496) circle [radius=0.0075];
\draw[fill] (  0.57769,  0.79512) circle [radius=0.0075];
\draw[fill] (  0.44619,  0.87570) circle [radius=0.0075];
\draw[fill] (  0.30371,  0.93472) circle [radius=0.0075];
\draw[fill] (  0.15375,  0.97072) circle [radius=0.0075];
\draw[fill] (  0.00000,  0.98282) circle [radius=0.0075];
\draw[fill] ( -0.15375,  0.97072) circle [radius=0.0075];
\draw[fill] ( -0.30371,  0.93472) circle [radius=0.0075];
\draw[fill] ( -0.44619,  0.87570) circle [radius=0.0075];
\draw[fill] ( -0.57769,  0.79512) circle [radius=0.0075];
\draw[fill] ( -0.69496,  0.69496) circle [radius=0.0075];
\draw[fill] ( -0.79512,  0.57769) circle [radius=0.0075];
\draw[fill] ( -0.87570,  0.44619) circle [radius=0.0075];
\draw[fill] ( -0.93472,  0.30371) circle [radius=0.0075];
\draw[fill] ( -0.97072,  0.15375) circle [radius=0.0075];
\draw[fill] ( -0.98282,  0.00000) circle [radius=0.0075];
\draw[fill] ( -0.97072, -0.15375) circle [radius=0.0075];
\draw[fill] ( -0.93472, -0.30371) circle [radius=0.0075];
\draw[fill] ( -0.87570, -0.44619) circle [radius=0.0075];
\draw[fill] ( -0.79512, -0.57769) circle [radius=0.0075];
\draw[fill] ( -0.69496, -0.69496) circle [radius=0.0075];
\draw[fill] ( -0.57769, -0.79512) circle [radius=0.0075];
\draw[fill] ( -0.44619, -0.87570) circle [radius=0.0075];
\draw[fill] ( -0.30371, -0.93472) circle [radius=0.0075];
\draw[fill] ( -0.15375, -0.97072) circle [radius=0.0075];
\draw[fill] (  0.00000, -0.98282) circle [radius=0.0075];
\draw[fill] (  0.15375, -0.97072) circle [radius=0.0075];
\draw[fill] (  0.30371, -0.93472) circle [radius=0.0075];
\draw[fill] (  0.44619, -0.87570) circle [radius=0.0075];
\draw[fill] (  0.57769, -0.79512) circle [radius=0.0075];
\draw[fill] (  0.69496, -0.69496) circle [radius=0.0075];
\draw[fill] (  0.79512, -0.57769) circle [radius=0.0075];
\draw[fill] (  0.87570, -0.44619) circle [radius=0.0075];
\draw[fill] (  0.93472, -0.30371) circle [radius=0.0075];
\draw[fill] (  0.97072, -0.15375) circle [radius=0.0075];
\draw[fill] (  0.99672,  0.00000) circle [radius=0.0075];
\draw[fill] (  0.98445,  0.15592) circle [radius=0.0075];
\draw[fill] (  0.94794,  0.30800) circle [radius=0.0075];
\draw[fill] (  0.88809,  0.45250) circle [radius=0.0075];
\draw[fill] (  0.80637,  0.58586) circle [radius=0.0075];
\draw[fill] (  0.70479,  0.70479) circle [radius=0.0075];
\draw[fill] (  0.58586,  0.80637) circle [radius=0.0075];
\draw[fill] (  0.45250,  0.88809) circle [radius=0.0075];
\draw[fill] (  0.30800,  0.94794) circle [radius=0.0075];
\draw[fill] (  0.15592,  0.98445) circle [radius=0.0075];
\draw[fill] (  0.00000,  0.99672) circle [radius=0.0075];
\draw[fill] ( -0.15592,  0.98445) circle [radius=0.0075];
\draw[fill] ( -0.30800,  0.94794) circle [radius=0.0075];
\draw[fill] ( -0.45250,  0.88809) circle [radius=0.0075];
\draw[fill] ( -0.58586,  0.80637) circle [radius=0.0075];
\draw[fill] ( -0.70479,  0.70479) circle [radius=0.0075];
\draw[fill] ( -0.80637,  0.58586) circle [radius=0.0075];
\draw[fill] ( -0.88809,  0.45250) circle [radius=0.0075];
\draw[fill] ( -0.94794,  0.30800) circle [radius=0.0075];
\draw[fill] ( -0.98445,  0.15592) circle [radius=0.0075];
\draw[fill] ( -0.99672,  0.00000) circle [radius=0.0075];
\draw[fill] ( -0.98445, -0.15592) circle [radius=0.0075];
\draw[fill] ( -0.94794, -0.30800) circle [radius=0.0075];
\draw[fill] ( -0.88809, -0.45250) circle [radius=0.0075];
\draw[fill] ( -0.80637, -0.58586) circle [radius=0.0075];
\draw[fill] ( -0.70479, -0.70479) circle [radius=0.0075];
\draw[fill] ( -0.58586, -0.80637) circle [radius=0.0075];
\draw[fill] ( -0.45250, -0.88809) circle [radius=0.0075];
\draw[fill] ( -0.30800, -0.94794) circle [radius=0.0075];
\draw[fill] ( -0.15592, -0.98445) circle [radius=0.0075];
\draw[fill] (  0.00000, -0.99672) circle [radius=0.0075];
\draw[fill] (  0.15592, -0.98445) circle [radius=0.0075];
\draw[fill] (  0.30800, -0.94794) circle [radius=0.0075];
\draw[fill] (  0.45250, -0.88809) circle [radius=0.0075];
\draw[fill] (  0.58586, -0.80637) circle [radius=0.0075];
\draw[fill] (  0.70479, -0.70479) circle [radius=0.0075];
\draw[fill] (  0.80637, -0.58586) circle [radius=0.0075];
\draw[fill] (  0.88809, -0.45250) circle [radius=0.0075];
\draw[fill] (  0.94794, -0.30800) circle [radius=0.0075];
\draw[fill] (  0.98445, -0.15592) circle [radius=0.0075];

 \end{tikzpicture}

\caption{
An illustration of locations of Zernike polynomial 
quadrature nodes with $20$ radial nodes
and $40$ angular nodes. }
\label{1f}
\end{figure}
\begin{table}[h!]

  \centering
\scalebox{0.9}{
\begin{tabular}[t]{lc}
   node & $\theta$ \\
  \hline
 $  1$ & $   0.0000000000000000$ \\
 $  2$ & $   0.1570796326794897$ \\
 $  3$ & $   0.3141592653589793$ \\
 $  4$ & $   0.4712388980384690$ \\
 $  5$ & $   0.6283185307179586$ \\
 $  6$ & $   0.7853981633974483$ \\
 $  7$ & $   0.9424777960769379$ \\
 $  8$ & $   1.0995574287564280$ \\
 $  9$ & $   1.2566370614359170$ \\
 $ 10$ & $   1.4137166941154070$ \\
 $ 11$ & $   1.5707963267948970$ \\
 $ 12$ & $   1.7278759594743860$ \\
 $ 13$ & $   1.8849555921538760$ \\
 $ 14$ & $   2.0420352248333660$ \\
 $ 15$ & $   2.1991148575128550$ \\
 $ 16$ & $   2.3561944901923450$ \\
 $ 17$ & $   2.5132741228718340$ \\
 $ 18$ & $   2.6703537555513240$ \\
 $ 19$ & $   2.8274333882308140$ \\
 $ 20$ & $   2.9845130209103030$ \\
 $ 21$ & $   3.1415926535897930$ \\
 $ 22$ & $   3.2986722862692830$ \\
 $ 23$ & $   3.4557519189487720$ \\
 $ 24$ & $   3.6128315516282620$ \\
 $ 25$ & $   3.7699111843077520$ \\
 $ 26$ & $   3.9269908169872410$ \\
 $ 27$ & $   4.0840704496667310$ \\
 $ 28$ & $   4.2411500823462210$ \\
 $ 29$ & $   4.3982297150257100$ \\
 $ 30$ & $   4.5553093477052000$ \\
 $ 31$ & $   4.7123889803846900$ \\
 $ 32$ & $   4.8694686130641790$ \\
 $ 33$ & $   5.0265482457436690$ \\
 $ 34$ & $   5.1836278784231590$ \\
 $ 35$ & $   5.3407075111026480$ \\
 $ 36$ & $   5.4977871437821380$ \\
 $ 37$ & $   5.6548667764616280$ \\
 $ 38$ & $   5.8119464091411170$ \\
 $ 39$ & $   5.9690260418206070$ \\
 $ 40$ & $   6.1261056745000970$ \\
\hline
\end{tabular}
\quad
\begin{tabular}[t]{lc}
   node & $r$ \\
  \hline
 $  1$ & $   0.0083000442070672$ \\
 $  2$ & $   0.0276430533525631$ \\
 $  3$ & $   0.0575344576368137$ \\
 $  4$ & $   0.0973041282065463$ \\
 $  5$ & $   0.1460632469641095$ \\
 $  6$ & $   0.2027224916634053$ \\
 $  7$ & $   0.2660161417643405$ \\
 $  8$ & $   0.3345303010944863$ \\
 $  9$ & $   0.4067344665164935$ \\
 $ 10$ & $   0.4810157112964263$ \\
 $ 11$ & $   0.5557147130369888$ \\
 $ 12$ & $   0.6291628194156031$ \\
 $ 13$ & $   0.6997193231640498$ \\
 $ 14$ & $   0.7658081136864078$ \\
 $ 15$ & $   0.8259528873644578$ \\
 $ 16$ & $   0.8788101326763239$ \\
 $ 17$ & $   0.9231991629103781$ \\
 $ 18$ & $   0.9581285688822349$ \\
 $ 19$ & $   0.9828187818547442$ \\
 $ 20$ & $   0.9967238933309499$ \\

\hline
\end{tabular}

}

\caption{
Locations in the radial and angular directions of Zernike 
polynomial quadrature nodes with $40$ angular nodes
and $20$ radial nodes.
}
\label{7140}
\end{table}
The following remark shows that we can reduce the total 
number of nodes in quadrature rule (\ref{10.7}) while still
integrating the same number of functions.
\begin{remark}
Quadrature rule (\ref{10.7}) integrates all Zernike 
polynomials of order up to $2m-1$ using a tensor product of $2m$ 
equispaced nodes in the angular direction and the $m$ roots of 
$\widetilde{P}_m$ (see \ref{215}) in the radial direction. 
However, for large enough $N$ and small enough $j$, $Z_{N,n}(r_j)$ is
of magnitude smaller than machine precision, where $r_j$ 
denotes the $j^{\text{th}}$ smallest root of $\widetilde{P}_m$. 
As a result, in order to integrate exactly
$Z_{N,n}$ for large $N$, we can use fewer equispaced nodes in 
the angular direction at radius $r_j$. 
\end{remark}
\section{Approximation of Zernike Polynomials}\label{secapprox}
In this section, we describe an interpolation scheme for
Zernike Polynomials.

We will denote by $r_1,...,r_M$ the $M$ roots of $\widetilde{P}_M$ 
(see \ref{215}).

\begin{theorem}\label{thminterp}
Let $M$ be a positive integer and $f:D \rightarrow \R$ be 
a linear combination of Zernike polynomials of degree at 
most $M-1$. That is,
\begin{equation}\label{1100}
f(r,\theta)=\sum_{i,j} \alpha_{i,j}^\ell \overline{Z}_{i,j}^\ell(r,\theta)
\end{equation}
where $i,j$ are non-negative integers satisfying
\begin{equation}\label{1120}
i+2j\leq M-1
\end{equation}
and where $\overline{Z}_{i,j}^\ell(r,\theta)$ is defined by (\ref{10.85})
 and $S_{i}^\ell$ is defined by (\ref{20}). Then,
\begin{equation}\label{1140}
\alpha_{i,j}^\ell=\sum_{k=1}^M \left[ \overline{R}_{i,j}(r_k)\omega_k 
\sum_{l=1}^{2M-1} \frac{2\pi}{2M-1} f(r_k,\theta_l) S_{i}^{\ell} 
(\theta_l)\right]
\end{equation}
where $r_1,...,r_M$ denote the $M$ roots of $\widetilde{P}_M$ 
(see \ref{215}) and $\theta_l$ is defined by the formula
\begin{equation}\label{1160}
\theta_l=l\frac{2\pi}{2M-1}
\end{equation}
for $l=1,2,...,2M-1$.
\end{theorem}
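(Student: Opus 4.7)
The plan is to substitute the expansion (\ref{1100}) of $f$ directly into the right-hand side of (\ref{1140}) and show, using the discrete orthogonality of the trigonometric functions $S_i^\ell$ on equispaced nodes together with the Gaussian quadrature of Lemma \ref{lem4.30}, that all cross-terms collapse via Kronecker deltas, leaving only $\alpha_{i,j}^\ell$.

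Fix the target index $(i,j,\ell)$ with $i+2j\leq M-1$. Substituting (\ref{1100}) into the inner angular sum, for each fixed $k$ one obtains by linearity
\begin{equation*}
\frac{2\pi}{2M-1}\sum_{l=1}^{2M-1} f(r_k,\theta_l) S_i^\ell(\theta_l)
= \sum_{i',j',\ell'} \alpha_{i',j'}^{\ell'}\, \overline{R}_{i',j'}(r_k) \cdot \frac{2\pi}{2M-1}\sum_{l=1}^{2M-1} S_{i'}^{\ell'}(\theta_l) S_i^{\ell}(\theta_l),
\end{equation*}
where $(i',j',\ell')$ ranges over the triples appearing in (\ref{1100}), so in particular $i'\leq M-1$. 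Using product-to-sum identities, $S_{i'}^{\ell'}(\theta)S_i^{\ell}(\theta)$ is a trigonometric polynomial whose frequencies are at most $i+i'\leq 2M-2$, so applying Lemma \ref{60} with $2M-1$ equispaced nodes yields
\begin{equation*}
\frac{2\pi}{2M-1}\sum_{l=1}^{2M-1} S_{i'}^{\ell'}(\theta_l) S_i^{\ell}(\theta_l)
= \int_0^{2\pi} S_{i'}^{\ell'}(\theta) S_i^{\ell}(\theta)\, d\theta = \delta_{i,i'}\delta_{\ell,\ell'}.
\end{equation*}

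After this angular collapse, the right-hand side of (\ref{1140}) reduces to
\begin{equation*}
\sum_{j'} \alpha_{i,j'}^\ell \sum_{k=1}^{M} \omega_k\, \overline{R}_{i,j}(r_k)\, \overline{R}_{i,j'}(r_k).
\end{equation*}
The product $\overline{R}_{i,j}(r)\overline{R}_{i,j'}(r)$ is a polynomial in $r$ of degree $(i+2j)+(i+2j')\leq 2M-2$, which is at most $2M-1$, so Lemma \ref{lem4.30} gives
\begin{equation*}
\sum_{k=1}^{M} \omega_k\, \overline{R}_{i,j}(r_k)\, \overline{R}_{i,j'}(r_k) = \int_0^1 \overline{R}_{i,j}(r)\, \overline{R}_{i,j'}(r)\, r\, dr = \delta_{j,j'},
\end{equation*}
where the final equality uses the orthonormality relation (\ref{1.75}) of the shifted Jacobi polynomials underlying $\overline{R}_{i,\cdot}$. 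The outer $j'$-sum then collapses to $\alpha_{i,j}^\ell$, as desired.

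The main obstacle is mild: it is just the bookkeeping of the two degree bounds, verifying that the tensor product of $2M-1$ equispaced angular nodes with $M$ Gauss--Jacobi radial nodes handles exactly the products of two objects of ``Zernike degree'' at most $M-1$. One additional point worth a sentence in the proof is the edge case $N=0$ in the definition (\ref{20}), where $S_0^\ell$ is independent of $\ell$ and the convention for $\ell$-indexing must be fixed so that no double-counting occurs in (\ref{1100}); modulo this convention, the argument is a clean tensor-product application of discrete orthogonality in $\theta$ and Gaussian quadrature in $r$.
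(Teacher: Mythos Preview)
Your proposal is correct and uses essentially the same ingredients as the paper's proof: discrete orthogonality of the $S_i^\ell$ on $2M-1$ equispaced nodes (Lemma~\ref{60}) for the angular direction, and exactness of the $M$-point Gauss--Jacobi rule (Lemma~\ref{lem4.30}) for the radial direction, with the degree bounds $i+i'\le 2M-2$ and $(i+2j)+(i+2j')\le 2M-2$ checked just as you do.

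The only structural difference is the direction of the argument. The paper starts from the integral expression $\alpha_{i,j}^\ell=\int_D f\,\overline{Z}_{i,j}^\ell$ and invokes the abstract Lemma~\ref{130} twice (once in $\theta$, once in $r$) to replace each integral by its discrete quadrature; you instead substitute the expansion of $f$ into the discrete sum and collapse cross-terms directly via Kronecker deltas. Your route is marginally more elementary in that it bypasses Lemma~\ref{130} and makes the degree bookkeeping explicit, while the paper's route is shorter on the page. Neither buys anything the other does not.
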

\begin{proof}
Clearly,
\begin{equation}\label{1180}
\alpha_{i,j}^\ell=\int_D f(r,\theta)\overline{Z}_{i,j}^\ell
=\int_{0} ^{2\pi} \int_0^1 f(r,\theta)\overline{R}_{i,j}(r)
S_{i}^\ell(\theta)rdrd\theta.
\end{equation}
Changing the order of integration of (\ref{1180}) and applying
Lemma \ref{60} and Lemma \ref{130}, we obtain
\begin{equation}\label{1200}
\begin{split}
\alpha_{i,j}^\ell&=\int_{0} ^{1} \overline{R}_{i,j}(r) 
r \int_{0}^{2\pi} f(r,\theta)S_{i}^\ell(\theta)d\theta dr \\
&=\int_{0} ^{1} \overline{R}_{i,j}(r) r \sum_{l=1}^{2M-1} 
\frac{2\pi}{2M-1} f(r,\theta_l) S_{i}^\ell(\theta_l)dr.
\end{split}
\end{equation}
Applying Lemma \ref{130} to (\ref{1200}), we obtain
\begin{equation}\label{1210}
\alpha_{i,j}^\ell=\sum_{k=1}^{M} \left[ \overline{R}_{i,j}(r_k) 
\omega_k \sum_{l=1}^{2M-1} 
\frac{2\pi}{2M-1} f(r_k,\theta_l) S_{i,j}^\ell(\theta_l)\right].
\end{equation}
\end{proof}
\begin{remark}
Suppose that $f:D\rightarrow \R$ is a linear combination of 
Zernike polynomials of degree at most $M-1$. 
It follows immediately from Theorem \ref{thminterp} and Theorem 
\ref{lem4.70}
that we can recover exactly the $M^2/2+M/2$ coefficients 
of the Zernike polynomial expanison of $f$ by evaluation of
$f$ at $2M^2-M$ points via (\ref{1140}).
\end{remark}
\begin{remark}
Recovering the $M^2/2+M/2$ coefficients of a Zernike expansion 
of degree at most $M-1$ via (\ref{1210}) requires $O(M^3)$ operations 
by using a FFT to compute the sum
\begin{equation}
\sum_{l=1}^{2M-1} \frac{2\pi}{2M-1} 
f(r,\theta_l) S_{i,j}^\ell(\theta_l)
\end{equation}
and then naively computing the sum
\begin{equation}\label{1240}
\begin{split}
\alpha_{i,j}^\ell=\sum_{k=1}^{M} \overline{R}_{i,j}(r_k) 
\omega_k \sum_{l=1}^{2M-1} 
\frac{2\pi}{2M-1} f(r_k,\theta_l) S_{i,j}^\ell(\theta_l).
\end{split}
\end{equation}
\end{remark}
\begin{remark}
Sum (\ref{1240}) can be computed using an FMM (see, for
example,~\cite{alpert}) which would reduce the evaluation of sum
(\ref{1210}) to a computational cost of $O(M^2 \log(M))$.
\end{remark}
\begin{remark}
Standard interpolation schemes on the unit disk often 
involve representing smooth functions as expansions
in non-smooth functions such as
\begin{equation}\label{1720}
T_n(r)S_N^\ell(\theta)
\end{equation}
where $n$ and $N$ are non-negative integers, $T_n$ is a Chebyshev
polynomial, and $S_N^\ell$ is defined in (\ref{20}).
Such interpolation schemes are amenable to the use of an FFT
in both the angular and radial directions and thus have a 
computational cost of only $O(M^2\log(M))$ for the interpolation
of an $M$-degree Zernike expansion.

However, interpolation scheme (\ref{1140}) has three main advantages
over such a scheme:\\
i) In order to represent a smooth function on the unit disk to full 
precision, a Zernike expansion requires approximately half as many 
terms as an expansion into functions of the form (\ref{1720}) 
(see Figure \ref{8400}).\\
ii) Each function in the interpolated expansion 
is smooth on the disk.\\
iii) The expansion is amenable to filtering.
\end{remark}
\section{Numerical Experiments}\label{secnumres}
The quadrature and interpolation formulas described in Sections
\ref{seczernquad} and \ref{secapprox} were implemented in Fortran 77.  
We used the Lahey/Fujitsu compiler on a 2.9 GHz Intel i7-3520M 
Lenovo laptop. All examples in this section were run in
double precision arithmetic.

In each table in this section, 
the column labeled ``nodes'' denotes the number of nodes in both
the radial and angular direction using quadrature rule (\ref{10.7}).
The column labeled ``exact integral'' denotes the true value of the 
integral being tested. This number is computed using adaptive
gaussian quadrature in extended precision.
The column labeled ``integral via quadrature'' denotes the 
integral approximation using quadrature rule (\ref{10.7}).

We tested the performance of quadrature rule (\ref{10.7}) 
in integrating three different functions over the unit disk. 
In Table \ref{7600} we approximated the integral over the unit
disk of the function $f_1$ defined by the formula
\begin{equation}\label{2100.10}
f_1(x,y)=\frac{1}{1+25(x^2+y^2)}.
\end{equation}
In Table \ref{7700} we use quadrature rule (\ref{10.7}) 
to approximate the integral over the unit
disk of the function $f_2$ defined by the formula
\begin{equation}\label{2200}
f_2(r,\theta)=J_{100}(150r)\cos(100\theta)).
\end{equation}
In Table \ref{7900}, we use quadrature rule (\ref{10.7})
to approximate the integral over the unit disk of the
function $f_3$ defined by the formula
\begin{equation}\label{2400}
f_3(r,\theta)=P_8(x)P_{12}(y).
\end{equation}
We tested the performance of interpolation scheme (\ref{1100})
on two functions defined on the unit disk.

In Figure \ref{8000} we plot the magnitude of 
the coefficients of the Zernike polynomials $R_{0,n}$
for $n=0,1,...,10$ using interpolation scheme (\ref{1100}) 
with $21$ nodes in the radial direction and $41$ in the 
angular direction 
on the function $f_1$ defined in (\ref{2100.10}). 
All coeficients of other terms were of magnitude smaller 
than $10^{-14}$.
In Table \ref{7800} we list the interpolated coefficients of the
Zernike polynomial expansion of the function $f_4$ defined by the 
formula
\begin{equation}
f_4(x,y)=P_2(x)P_4(y)
\end{equation}
where $P_i$ is the $i$th degree Legendre polynomial.
Listed are the coefficients using interpolation 
scheme (\ref{1100}) with $5$ points in the radial direction
and $9$ points in the angular direction
of Zernike polynomials
\begin{equation}
R_{N,n}\cos(N\theta)
\end{equation}
where $N=0,1,...,8$ and $n=0,1,2,3,4$.
All other coefficients were of magnitude smaller than 
$10^{-14}$.
We interpolated the Bessel function 
\begin{equation}\label{9800}
J_{10}(10r)cos(10\theta)
\end{equation}
using interpolation scheme (\ref{1100}) and plot the 
resulting coefficients of the Zernike polynomials 
\begin{equation}\label{9850}
R_{10,n}\cos(10\theta)
\end{equation}
for $n=0,...,16$ in Figure \ref{8400}. All other coefficients
were approximately $0$ to machine precision.
In Figure \ref{8400}, we plot the coefficients of the 
Chebyshev expansion obtained via Chebyshev interpolation 
of the radial component of (\ref{9800}).
\clearpage
\begin{table}[h!]

  \centering
\resizebox{\columnwidth}{!}{%
\begin{tabular}{lllll}
  radial nodes & angular nodes & exact integral & 
  integral via quadrature & relative error \\
  \hline
 $  5 $ & $10$ & $0.4094244859413851$ & $0.4097244673896003$ & $0.732691\times 10^{-3}$ \\
 $ 10$  & $20$ & $0.4094244859413851$ & $0.4094251051077367$ & $0.151228\times 10^{-5}$ \\
 $ 15$  & $30$ & $0.4094244859413851$ & $0.4094244870531256$ & $0.271537\times 10^{-8}$ \\
 $ 20$  & $40$ & $0.4094244859413851$ & $0.4094244859432513$ & $0.455821\times 10^{-11}$ \\
 $ 25$  & $50$ & $0.4094244859413851$ & $0.4094244859413883$ & $0.791759\times 10^{-14}$ \\
 $ 30$  & $60$ & $0.4094244859413851$ & $0.4094244859413848$ & $0.630994\times 10^{-15}$ \\
 $ 35$  & $70$ & $0.4094244859413851$ & $0.4094244859413850$ & $0.142503\times 10^{-15}$ \\
 $ 40$  & $80$ & $0.4094244859413851$ & $0.4094244859413858$ & $0.181146\times 10^{-14}$ \\
  \hline
\end{tabular}
}

  \caption{Quadratures for $f_1(x,y)=(1+25(x^2+y^2))^{-1}$ 
  over the unit disk several different numbers of nodes}
    \label{7600}
\end{table}
\clearpage
\begin{table}[h!]

  \centering
\begin{tabular}{llll}
   radial nodes & angular nodes & exact integral & integral via quadrature  \\
  \hline
 $  5$ & $ 10$ & $0$ & $\hphantom- 0.2670074163846569\times 10^{-1}$ \\
 $ 10$ & $ 20$ & $0$ & $\hphantom- 0.2606355680939063\times 10^{-2}$ \\
 $ 15$ & $ 30$ & $0$ & $\hphantom- 0.3119143925398078\times 10^{-15}$ \\
 $ 20$ & $ 40$ & $0$ & $\hphantom- 0.0000000000000000\times 10^{0}$ \\
 $ 25$ & $ 50$ & $0$ & $\hphantom- 0.3228321977714574\times 10^{-1}$ \\
 $ 30$ & $ 60$ & $0$ & $\hphantom- 0.4945592102178045\times 10^{-16}$ \\
 $ 35$ & $ 70$ & $0$ & $\hphantom- 0.1147861841710902\times 10^{-16}$ \\
 $ 40$ & $ 80$ & $0$ & $\hphantom- 0.8148891073315595\times 10^{-16}$ \\
 $ 45$ & $ 90$ & $0$ & $-0.7432759692263743\times 10^{-16}$ \\
 $ 50$ & $100$ & $0$ & $\hphantom- 0.3207999037057322\times 10^{-1}$ \\
 $ 55$ & $110$ & $0$ & $-0.1399753743762347\times 10^{-15}$ \\
 $ 60$ & $120$ & $0$ & $\hphantom- 0.3075136040459932\times 10^{-16}$ \\
 $ 65$ & $130$ & $0$ & $-0.9458788981593222\times 10^{-16}$ \\
 $ 70$ & $140$ & $0$ & $\hphantom- 0.2045957446273746\times 10^{-17}$ \\
 $ 75$ & $150$ & $0$ & $\hphantom- 0.2416178317504225\times 10^{-16}$ \\
  \hline

\end{tabular}

  \caption{Quadratures for $f_2(r,\theta)=J_{100}(150r)\cos(100\theta)$
   using several different numbers of nodes}
    \label{7700}
\end{table}
\clearpage
\begin{table}[h!]

  \centering
\resizebox{\columnwidth}{!}{%
\begin{tabular}{rrrcl}
   radial nodes & angular nodes & integral via quadrature & exact integral & relative error \\
  \hline
 $  5$&$ 10$ & $-0.8998055487754142\times 10^{-2}$ & $-0.1527947805159123\times 10^{-2}$ & $-0.830191\times 10^{0}$ \\
 $ 10$&$ 20$ & $\hphantom- 0.1655201967553289\times 10^{-1}$ & $-0.1527947805159123\times 10^{-2}$ & $-0.109231\times 10^{1}$ \\
 $ 15$&$ 30$ & $-0.1527947805159138\times 10^{-2}$ & $-0.1527947805159123\times 10^{-2}$ & $-0.979221\times 10^{-14}$ \\
 $ 20$&$ 40$ & $-0.1527947805159132\times 10^{-2}$ & $-0.1527947805159123\times 10^{-2}$ & $-0.567665\times 10^{-14}$ \\
 $ 25$&$ 50$ & $-0.1527947805159108\times 10^{-2}$ & $-0.1527947805159123\times 10^{-2}$ & $\hphantom- 0.102180\times 10^{-13}$ \\
 $ 30$&$ 60$ & $-0.1527947805159144\times 10^{-2}$ & $-0.1527947805159123\times 10^{-2}$ & $-0.134820\times 10^{-13}$ \\
 $ 35$&$ 70$ & $-0.1527947805159128\times 10^{-2}$ & $-0.1527947805159123\times 10^{-2}$ & $-0.269641\times 10^{-14}$ \\
 $ 40$&$ 80$ & $-0.1527947805159155\times 10^{-2}$ & $-0.1527947805159123\times 10^{-2}$ & $-0.210036\times 10^{-13}$ \\
  \hline
\end{tabular}
}
  \caption{Quadratures for $f_3(x,y)=P_8(x)P_{12}(y)$ 
 (see (\ref{2400})) using several different numbers of nodes}
    \label{7900}
\end{table}
\begin{figure}[h!]
\centering
\begin{tikzpicture}

\begin{axis}[
    xlabel={coefficient},
    ylabel={},
    xmin=0, xmax=10,
    ymin=0, ymax=0.3,
    xtick={0,1,2,3,4,5,6,7,8,9,10,11,12},
    ytick={0.1,0.2,0.3,0.4,0.5},
    legend pos=north east,
]
 
\addplot[
    mark=*
    ]
    coordinates 
    {
    (0,0.23)
    (1,0.18)
    (2,0.13)
    (3,0.09)
    (4,0.06)
    (5,0.04)
    (6,0.028)
    (7,0.019)
    (8,0.012)
    (9,0.0087)
    (10,0.006)
    };
 
    \legend{$N=0$}    

\end{axis}

\end{tikzpicture}

\caption{
magnitudes of coefficients of interpolation of 
$f_1(x,y)=(1+25(x^2+y^2))^{-1}$ for $N=0$
}
\label{8000}
\end{figure}
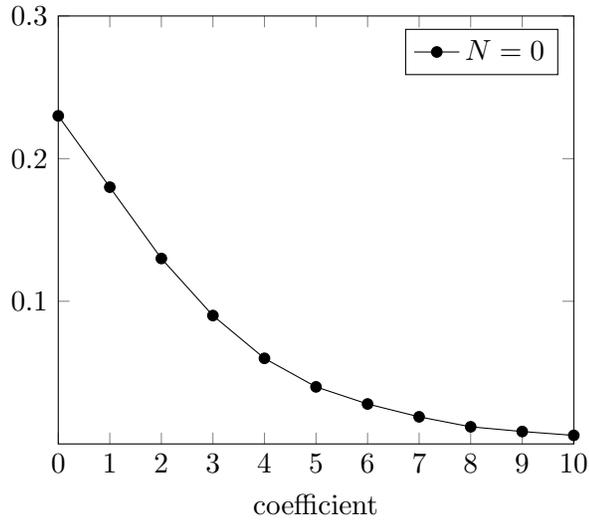
\clearpage
\begin{table}[h!]

  \centering

\resizebox{\columnwidth}{!}{%

\begin{tabular}{llllll}
   $N$ & $n=0$  & $n=1$  & $n=2$  & $n=3$ & $n=4$  \\
  \hline
 $ 0$ & $ \hphantom- 0.02942$ & $\hphantom-0.03297$ & $-0.11998$ & $\hphantom-0.01373$ & 
 $ \hphantom-0.53776 \times 10^{-16}$ \\

 $ 1$ & $ -0.48788 \times 10^{-16}$ & $\hphantom-0.76567 \times 10^{-17} $ & 
 $ \hphantom-0.99670 \times 10^{-18} $ & $ \hphantom-0.22059 \times 10^{-16}$ & - \\

 $ 2$ & $ \hphantom-0.02967$ & $\hphantom-0.11495$ & $-0.00647$ & 
 $-0.90206 \times 10^{-16}$ & - \\

 $ 3$ & $ \hphantom-0.58217 \times 10^{-16}$ & $-0.73297 \times 10^{-16} $ & 
        $\hphantom-0.19321 \times 10^{-17}$ & - & - \\

 $ 4$ & $ \hphantom-0.04926$ & $-0.03238$ & $-0.13010 \times 10^{-16} $ & 
 - & - \\

 $ 5$ & $ \hphantom-0.77604 \times 10^{-16} $ & $\hphantom-0.10474 \times 10^{-15}$ & 
 - & - & - \\

 $ 6$ & $ \hphantom-0.09714$ & $-0.11102 \times 10^{-15}$ & 
 - & - & - \\

 $ 7$ & $ -0.18100 \times 10^{-16} $ & - & - & - & - \\  

 $ 8$ & $ \hphantom-0.77241 \times 10^{-16}  $ & - & - & - & - \\

  \hline

\end{tabular}
}
  \caption{Coefficients of the interpolation of the function 
           $f_4(x,y)=P_2(x)P_4(y)$ into Zernike polynomials of
           degree at most $8$. The entry corresponding to 
           $N,n$ is the coefficient of $R_{N,n}\cos(N\theta)$.
           }
    \label{7800}
\end{table}
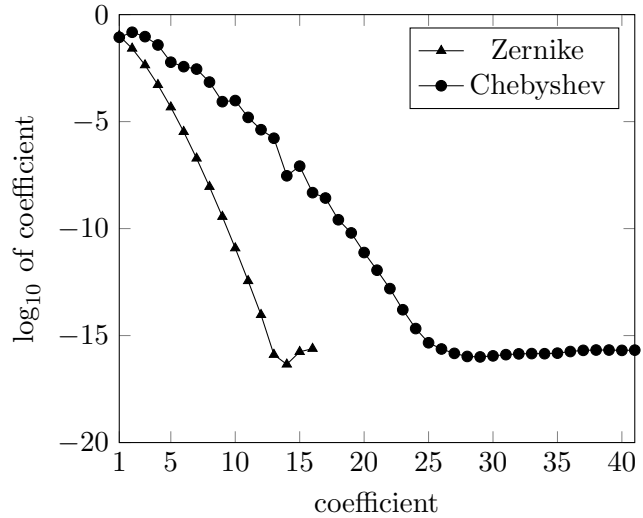
\begin{figure}[h!]
\centering
\begin{tikzpicture}

\begin{axis}[
    xlabel={coefficient},
    ylabel={$\log_{10}$ of coefficient},
    xmin=1, xmax=41,
    ymin=-20, ymax=0,
    xtick={1,5,10,15,20,25,30,35,40,45},
    ytick={0,-5,-10,-15,-20},
    legend pos=north east,
]
 
\addplot[
    mark=triangle*
    ]
    coordinates 
    {
     (  1,      -0.9898973637462893)
     (  2,      -1.5819586550661100)
     (  3,      -2.3588831162234810)
     (  4,      -3.2818384538878600)
     (  5,      -4.3265899327402020)
     (  6,      -5.4763255881423960)
     (  7,      -6.7185889712528420)
     (  8,      -8.0437256952401060)
     (  9,      -9.4440037263722940)
     ( 10,     -10.9130714107455100)
     ( 11,     -12.4457655714899900)
     ( 12,     -14.0303912755872500)
     ( 13,     -15.8930077195246900)
     ( 14,     -16.3457963962040700)
     ( 15,     -15.7586036824183800)
     ( 16,     -15.6123213406515100)
    };
 
 \addplot[
    mark=*
    ]
    coordinates 
    {
     (  1,      -1.0595713478383200)
     (  2,      -0.8238192166464383)
     (  3,      -1.0293346132799840)
     (  4,      -1.4184210516931150)
     (  5,      -2.2236928015527780)
     (  6,      -2.4320890160470980)
     (  7,      -2.5458791787004040)
     (  8,      -3.1529353540202170)
     (  9,      -4.0684262705269870)
     ( 10,      -4.0164562320349430)
     ( 11,      -4.8033818867489570)
     ( 12,      -5.3788871564204650)
     ( 13,      -5.7771701061737610)
     ( 14,      -7.5330217317784680)
     ( 15,      -7.0809635324045650)
     ( 16,      -8.3203906346855500)
     ( 17,      -8.5720716277244970)
     ( 18,      -9.5869646653661940)
     ( 19,     -10.2017710131164700)
     ( 20,     -11.1192420938158200)
     ( 21,     -11.9441235125173900)
     ( 22,     -12.8058618265459900)
     ( 23,     -13.7926138070249200)
     ( 24,     -14.6726336554743400)
     ( 25,     -15.3358881075438200)
     ( 26,     -15.6272308358046700)
     ( 27,     -15.8313508184606000)
     ( 28,     -15.9754399091641200)
     ( 29,     -15.9973417506119500)
     ( 30,     -15.9478563875320300)
     ( 31,     -15.8915380244439100)
     ( 32,     -15.8549738638070200)
     ( 33,     -15.8416916517988200)
     ( 34,     -15.8469558917911700)
     ( 35,     -15.8212504915562700)
     ( 36,     -15.7458293943819500)
     ( 37,     -15.6907318775671300)
     ( 38,     -15.6753007742882200)
     ( 39,     -15.6761934662405600)
     ( 40,     -15.6916568927877700)
     ( 41,     -15.6802334440423000)
    };

    \legend{Zernike, Chebyshev}    

\end{axis}

\end{tikzpicture}

\caption{
Coefficients of the Zernike expansion for $N=10$ of 
$J_{10}(10r)\cos(10\theta)$ using Chebyshev and 
Zernike interpolation in the radial direction with 
$81$ points in the angular direction and $41$ points
in the radial direction.
}
\label{8400}
\end{figure}
\clearpage
\section{Appendix A: Mathematical Properties of Zernike Polynomials}
In this appendix, we define Zernike polynomials in $\R^{p+2}$
and describe some of their basic properties. Zernike polynomials,
denoted $Z_{N,n}^\ell$, are a sequence of orthogonal polynomials 
defined via the formula
  \begin{align}
Z_{N,n}^\ell(x) = R_{N,n}(\|x\|)
  S_N^\ell(x/\|x\|),
    \label{6.10}
  \end{align}
for all $x\in \R^{p+2}$ such that $\|x\| \le 1$, 
where $N$ and $n$ are nonnegative
integers, $S_N^\ell$ are the orthonormal surface harmonics of degree $N$ 
(see Appendix C), and $R_{N,n}$ are polynomials 
of degree $2n+N$ defined via the
formula
  \begin{align}
R_{N,n}(x) = x^N \sum_{m=0}^n (-1)^m {n+N+\frac{p}{2} \choose m} 
  {n\choose m} (x^2)^{n-m} (1-x^2)^m,
    \label{6.20}
  \end{align}
for all $0\le x\le 1$. The polynomials $R_{N,n}$ satisfy the relation
  \begin{align}
R_{N,n}(1) = 1,
    \label{6.30}
  \end{align}
and are orthogonal with respect to the weight function $w(x) = x^{p+1}$, so
that
  \begin{align}
\int_0^1 R_{N,n}(x)R_{N,m}(x) x^{p+1}\, dx = \frac{\delta_{n,m}}
  {2(2n+N+\frac{p}{2}+1)},
    \label{6.40}
  \end{align}
where
  \begin{align}
\delta_{n,m} = \left\{
  \begin{array}{ll}
  1 & \mbox{if $n = m$}, \\
  0 & \mbox{if $n \ne m$}.
  \end{array}
\right.
    \label{6.50}
  \end{align}
We define the polynomials $\overline{R}_{N,n}$ via the formula
  \begin{align}
\overline{R}_{N,n}(x) = \sqrt{2(2n+N+p/2+1)} R_{N,n}(x),
    \label{6.60}
  \end{align}
so that
  \begin{align}
\int_0^1 \bigl(\overline{R}_{N,n}(x)\bigr)^2 x^{p+1}\, dx = 1,
    \label{6.70}
  \end{align}
where $N$ and $n$ are nonnegative integers.
We define the normalized Zernike polynomial, 
$\overline{Z}_{N,n}^\ell$, by the formula
\begin{equation}\label{10.85.2}
\overline{Z}_{N,n}(x)=\overline{R}_{N,n}(\|x\|)S_{N}^\ell(x/\|x\|)
\end{equation}
for all $x\in \R^{p+2}$ such that $\|x\| \le 1$, 
where $N$ and $n$ are nonnegative
integers, $S_N^\ell$ are the orthonormal surface harmonics of degree $N$ 
(see Appendix C), and $R_{N,n}$ is defined in (\ref{6.20}).
We observe that $\overline{Z}_{N,n}^\ell$
has $L^2$ norm of $1$ on the unit ball in $\R^{p+2}$.

In an abuse of notation, we refer to both 
the polynomials $Z^\ell_{N,n}$ and the radial polynomials
$R_{N,n}$  as Zernike polynomials where the meaning is obvious.
\begin{remark}
When $p=-1$, the Zernike polynomials take the form
  \begin{align}
&Z_{0,n}^1(x) = R_{0,n}(|x|) = P_{2n}(x), \label{6.90} \\
&Z_{1,n}^2(x) = \sgn(x)\cdot R_{1,n}(|x|) = P_{2n+1}(x), \label{6.100}
  \end{align}
for $-1\le x\le 1$ and nonnegative integer $n$, where $P_n$ denotes the
Legendre polynomial of degree $n$ and
  \begin{align}
  \sgn(x) = \left\{
    \begin{array}{ll}
    1  & \mbox{if $x > 0$}, \\
    0  & \mbox{if $x = 0$}, \\
    -1 & \mbox{if $x < 0$},
    \end{array}
    \right.
    \label{6.110}
  \end{align}
for all real $x$.
\end{remark}

\begin{remark} \label{rem6.1}
When $p=0$, the Zernike polynomials take the form
  \begin{align}
&Z_{N,n}^1(x_1,x_2) 
  = R_{N,n}(r) \cos(N\theta)/\sqrt{\pi}, \label{6.71}  \\
&Z_{N,n}^2(x_1,x_2)  
  = R_{N,n}(r) \sin(N\theta)/\sqrt{\pi}, \label{6.80}
  \end{align}
where $x_1=r\cos(\theta)$, $x_2=r\sin(\theta)$, and $N$ and $n$ are
nonnegative integers. 
\end{remark}

\subsection{Special Values} \label{sect6.2}

The following formulas are valid for all nonnegative integers $N$ and $n$, and
for all $0\le x\le 1$.
  \begin{align}
& R_{N,0}(x) = x^N, \label{6.180} \\
& R_{N,1}(x) = x^N \bigl((N+p/2+2)x^2 - (N+p/2+1)\bigr), \label{6.190} \\
& R_{N,n}(1) = 1, \label{6.200} \\
& R_{N,n}^{(k)}(0) =0 \quad \mbox{for $k=0,1,\ldots,N-1$}, \label{6.210} \\
& R_{N,n}^{(N)}(0) = (-1)^n N! {n+N+\frac{p}{2} \choose n}. \label{6.220}
  \end{align}

\subsection{Hypergeometric Function} \label{sect6.3}

The polynomials $R_{N,n}$ are related to the hypergeometric function $_2 F_1$
(see~\cite{abramowitz}) by the formula
  \begin{align}
&\hspace*{-3em} R_{N,n}(x) = (-1)^n {n+N+\frac{p}{2} \choose n} x^N
  {_2 F_1}\Bigl(-n, n+N+\frac{p}{2}+1; N+\frac{p}{2}+1; x^2\Bigr),
    \label{6.230}
  \end{align}
where $0\le x\le 1$, and $N$ and $n$ are nonnegative integers.

\subsection{Interrelations} \label{sect6.4}

The polynomials $R_{N,n}$ are related to the Jacobi polynomials via the
formula
  \begin{equation}
R_{N,n}(x) = (-1)^n x^N P_n^{(N+\frac{p}{2},0)}(1-2x^2),
    \label{6.240.2}
  \end{equation}
where $0\le x\le 1$, $N$ and $n$ are nonnegative integers, and
$P^{(\alpha,\beta)}_n$, $\alpha, \beta > -1$, denotes the Jacobi polynomials
of degree $n$ (see~\cite{abramowitz}).

When $p=-1$, the polynomials $R_{N,n}$ are related to the Legendre
polynomials via the formulas
  \begin{align}
R_{0,n}(x) &= P_{2n}(x), \label{6.250} \\
R_{1,n}(x) &= P_{2n+1}(x), \label{6.260}
  \end{align}
where $0\le x\le 1$, $n$ is a nonnegative integer, and $P_n$ denotes the
Legendre polynomial of degree $n$ (see~\cite{abramowitz}).

\subsection{Limit Relations} \label{sect6.5}

The asymptotic behavior of the Zernike polynomials near $0$ as the index $n$
tends to infinity is described by the formula
  \begin{equation}
\lim_{n\to\infty} \frac{(-1)^n R_{N,n}\bigl(\frac{x}{2n}\bigr)}
{(2n)^{p/2}}
= \frac{J_{N+p/2}(x)}{x^{p/2}},
    \label{6.270}
  \end{equation}
where $0\le x \le 1$, $N$ is a nonnegative integer, and $J_\nu$
denotes the Bessel functions of the first kind
(see~\cite{abramowitz}).

\subsection{Zeros} \label{sect6.6}

The asymptotic behavior of the zeros of the polynomials $R_{N,n}$ as $n$
tends to infinity is described by the following relation.  Let
$x_{N,m}^{(n)}$ be the $m$th positive zero of $R_{N,n}$, so that $0 <
x_{N,1}^{(n)} < x_{N,2}^{(n)} < \ldots$. Likewise, let $j_{\nu,m}$ be the
$m$th positive zero of $J_\nu$, so that $0 < j_{\nu,1} < j_{\nu,2} < \ldots$,
where $J_\nu$ denotes the Bessel
functions of the first kind (see~\cite{abramowitz}). Then
  \begin{equation}
\lim_{n\to\infty} 2n x_{N,m}^{(n)} = j_{N+p/2,m},
    \label{6.280}
  \end{equation}
for any nonnegative integer $N$.

\subsection{Inequalities} \label{sect6.7}

The inequality
  \begin{equation}
|R_{N,n}(x)| \le {n+N+\frac{p}{2} \choose n}
    \label{6.290}
  \end{equation}
holds for $0\le x\le 1$ and nonnegative integer $N$ and $n$.

\subsection{Integrals} \label{sect6.8}

The polynomials $R_{N,n}$ satify the relation
  \begin{equation}
\int_0^1 \frac{J_{N+p/2}(xy)}{(xy)^{p/2}} R_{N,n}(y) y^{p+1}\, dy
= \frac{(-1)^n J_{N+p/2+2n+1}(x)}{x^{p/2+1}},
    \label{6.300}
  \end{equation}
where $x\ge 0$, $N$ and $n$ are nonnegative integers, and $J_\nu$
denotes the Bessel functions of the
first kind.

\subsection{Generating Function} \label{sect6.9}

The generating function associated with the polynomials $R_{N,n}$ is given by
the formula
  \begin{equation}
\frac{\bigl(1+z - \sqrt{1+2z(1-2x^2)+z^2}\bigr)^{N+p/2}}
  {(2zx)^{N+p/2} x^{p/2} \sqrt{1+2z(1-2x^2)+z^2}}
= \sum_{n=0}^\infty R_{N,n}(x) z^n,
    \label{6.310}
  \end{equation}
where $0\le x\le 1$ is real, $z$ is a complex number such that
$|z|\le 1$, and $N$ is a nonnegative integer.

\subsection{Differential Equation} \label{sect6.10}

The polynomials $R_{N,n}$ satisfy the differential equation
  \begin{equation}
(1-x^2)y''(x) - 2xy'(x) + \biggl(\chi_{N,n} 
+ \frac{\frac{1}{4} - (N+\frac{p}{2})^2}{x^2}\biggr)y(x) = 0,
    \label{6.320}
  \end{equation}
where
  \begin{align}
\chi_{N,n} = (N+\tfrac{p}{2}+2n+\tfrac{1}{2})
(N+\tfrac{p}{2}+2n+\tfrac{3}{2}),
    \label{6.330}
  \end{align}
and
  \begin{align}
y(x) = x^{p/2+1} R_{N,n}(x),
    \label{6.340}
  \end{align}
for all $0 < x < 1$ and nonnegative integers $N$ and $n$.

\subsection{Recurrence Relations} \label{sect6.11}

The polynomials $R_{N,n}$ satisfy the recurrence relation
  \begin{align}
&\hspace*{-4em} 2(n+1)(n+N+\tfrac{p}{2}+1)(2n+N+\tfrac{p}{2})R_{N,n+1}(x) 
  \notag \\
&= -\bigl( (2n+N+\tfrac{p}{2}+1)(N+\tfrac{p}{2})^2 + (2n+N+\tfrac{p}{2})_3
  (1-2x^2)\bigr) R_{N,n}(x) \notag \\
&\hspace*{2em} - 2n(n+N+\tfrac{p}{2})(2n+N+\tfrac{p}{2}+2) R_{N,n-1}(x),
    \label{6.350.2}
  \end{align}
where $0\le x\le 1$, $N$ is a nonnegative integer, $n$ is a positive integer,
and $(\cdot)_n$ is defined via the formula
  \begin{align}
  (x)_n = x(x+1)(x+2)\ldots (x+n-1),
    \label{6.360}
  \end{align}
for real $x$ and nonnegative integer $n$.  The polynomials $R_{N,n}$ also
satisfy the recurrence relations
  \begin{align}
&\hspace*{-4em} (2n+N+\tfrac{p}{2}+2)x R_{N+1,n}(x) 
= (n+N+\tfrac{p}{2}+1) R_{N,n}(x)
+ (n+1) R_{N,n+1}(x),
    \label{6.370}
  \end{align}
for nonnegative integers $N$ and $n$, and
  \begin{align}
(2n+N+\tfrac{p}{2})x R_{N-1,n}(x) = (n+N+\tfrac{p}{2}) R_{N,n}(x) 
+ n R_{N,n-1}(x),
    \label{6.380}
  \end{align}
for integers $N\ge 1$ and $n\ge 0$, where $0\le x\le 1$.

\subsection{Differential Relations} \label{sect6.12}

The Zernike polynomials satisfy the differential relation given by
the formula
  \begin{align}
&\hspace*{-4em} (2n+N+\tfrac{p}{2})x(1-x^2)\frac{d}{dx} R_{N,n}(x) 
  \notag \\
&= \bigl( N(2n+N+\tfrac{p}{2})+2n^2 - (2n+N)(2n+N+\tfrac{p}{2})x^2 \bigr)
  R_{N,n}(x) \notag \\
&\hspace*{4em} + 2n(n+N+\tfrac{p}{2})R_{N,n-1}(x),
    \label{6.390}
  \end{align}
where $0< x< 1$, $N$ is a nonnegative integer, and $n$ is a positive
integer.

\section{Appendix B: Numerical Evaluation of Zernike Polynomials in 
$\R^{p+2}$}
The main analytical tool of this section is Lemma \ref{lem750} 
which provides a recurrence relation that can be used for 
the evaluation of radial Zernike Polynomials, $R_{N,n}$.

According to \cite{abramowitz}, radial Zernike polynomials, 
$R_{N,n}$, are related to Jacobi polynomials via the formula
\begin{equation}    \label{6.240}
R_{N,n}(x) = (-1)^n x^N P_n^{(N+\frac{p}{2},0)}(1-2x^2),
\end{equation}
where $0\le x\le 1$, $N$ and $n$ are nonnegative integers, and
$P^{(\alpha,0)}_n$ is defined in (\ref{192}).

The following lemma provides a relation that can be used to 
evaluate the polynomial $R_{N,n}$.
\begin{lemma}\label{lem750}
The polynomials $R_{N,n}$ satisfy the recurrence relation
  \begin{align}
&\hspace*{-4em} 2(n+1)(n+N+\tfrac{p}{2}+1)(2n+N+\tfrac{p}{2})R_{N,n+1}(x) 
  \notag \\
&= -\bigl( (2n+N+\tfrac{p}{2}+1)(N+\tfrac{p}{2})^2 + (2n+N+\tfrac{p}{2})_3
  (1-2x^2)\bigr) R_{N,n}(x) \notag \\
&\hspace*{2em} - 2n(n+N+\tfrac{p}{2})(2n+N+\tfrac{p}{2}+2) R_{N,n-1}(x),
    \label{6.350}
  \end{align}
where $0\le x\le 1$, $N$ is a nonnegative integer, $n$ is a 
positive integer, and $(\cdot)_n$ is defined via the formula
  \begin{align}
  (x)_n = x(x+1)(x+2)\ldots (x+n-1),
    \label{6.360.2}
  \end{align}
for real $x$ and nonnegative integer $n$.
\end{lemma}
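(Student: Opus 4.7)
The plan is to mirror the proof of Lemma~\ref{lem755}, but with the Jacobi parameter $\alpha = N$ replaced by $\alpha = N + p/2$ throughout. The starting point is the identity (\ref{6.240}), which expresses $R_{N,n}$ in terms of a Jacobi polynomial of type $(N+p/2, 0)$ evaluated at $1 - 2x^2$. Once this substitution is in hand, the entire recurrence falls out of the classical three-term recurrence for Jacobi polynomials (the analogue of (\ref{195})) after a routine change of variable.

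More concretely, the first step is to write down the standard Jacobi three-term recurrence for $P_n^{(\alpha, 0)}$ with $\alpha = N + p/2$, in the variable $y \in [-1,1]$; this can be read off from the $\alpha = N$ case stated in (\ref{195}) simply by promoting $N$ to $N + p/2$ in every occurrence of the parameter. The coefficients of $P_{n+1}^{(\alpha,0)}$, $P_n^{(\alpha,0)}$, and $P_{n-1}^{(\alpha,0)}$ in this recurrence are exactly the ones appearing in (\ref{6.350}), with the leading factor $2(n+1)(n+N+\tfrac{p}{2}+1)(2n+N+\tfrac{p}{2})$ on the left and the Pochhammer symbol $(2n+N+\tfrac{p}{2})_3$ bundling the product $(2n+N+\tfrac{p}{2})(2n+N+\tfrac{p}{2}+1)(2n+N+\tfrac{p}{2}+2)$.

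The second step is to set $y = 1 - 2x^2$ in that recurrence and then multiply both sides by $(-1)^{n+1} x^N$. By (\ref{6.240}), one has $(-1)^k x^N P_k^{(N+p/2, 0)}(1 - 2x^2) = R_{N,k}(x)$ for each $k \in \{n-1, n, n+1\}$, and matching signs carefully (the $(-1)^{n+1}$ turns the $P_n$ term into $-R_{N,n}$ and the $P_{n-1}$ term into $-R_{N,n-1}$, explaining the signs on the right-hand side of (\ref{6.350})) yields exactly the claimed identity.

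I expect no genuine mathematical difficulty here; the only place where care is needed is the bookkeeping of signs and the correct expansion of $(2n+N+\tfrac{p}{2})_3$, which must be kept intact rather than expanded in order for the coefficients to match (\ref{6.350}) on the nose. In other words, the main obstacle is purely notational rather than conceptual, and the proof consists essentially of citing (\ref{6.240}) together with the classical Jacobi three-term recurrence with $\alpha = N + p/2$.
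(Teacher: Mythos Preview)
Your proposal is correct and follows essentially the same route as the paper: the paper's proof simply records the Jacobi three-term recurrence for $P_n^{(\alpha,0)}$ with $\alpha = N + p/2$ (equations (\ref{230.2}) and (\ref{240.2})) and states that (\ref{6.350}) follows immediately from combining this with the representation (\ref{6.240}). Your write-up is in fact more explicit about the sign bookkeeping than the paper's one-line conclusion, but the underlying argument is identical.
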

\begin{proof}
It is well known that the Jacobi polynomial 
$P_n^{(\alpha,0)}(x)$ satisfies the recurrence relation
\begin{equation}\label{230.2}
a_{1n}P_{n+1}^{(\alpha,0)}=(a_{2n}+a_{3n}x)P_n^{(\alpha,0)}(x)
-a_{4n}P_{n-1}^{(\alpha,0)}(x)
\end{equation}
where 
\begin{equation}\label{240.2}
\begin{aligned}
a_{1n}&=2(n+1)(n+\alpha+1)(2n+\alpha)\\
a_{2n}&=(2n+\alpha+1)\alpha^2\\
a_{3n}&=(2n+\alpha)(2n+\alpha+1)(2n+\alpha+2)\\
a_{4n}&=2(n+\alpha)(n)(2n+\alpha+2)
\end{aligned}
\end{equation}
Identity (\ref{6.350}) follows immediately from the combination
of (\ref{230.2}) and (\ref{240.2}).
\end{proof}
\section{Appendix C: Spherical Harmonics in $\R^{p+2}$}
Suppose that $S^{p+1}$ denotes the unit sphere in $\R^{p+2}$. The
spherical harmonics are a set of real-valued continuous functions on
$S^{p+1}$, which are orthonormal and complete in $L^2(S^{p+1})$. The
spherical harmonics of degree $N\ge 0$ are denoted by $S_N^1, S_N^2,
\ldots, \allowbreak S_N^\ell, \ldots, S_N^{h(N)}\colon S^{p+1} \to \R$, 
where
  \begin{align}
h(N)=(2N+p) \frac{(N+p-1)!} {p!\,N!},
  \end{align}
for all nonnegative integers $N$.

The following theorem defines the spherical harmonics as the 
values of certain harmonic, homogeneous polynomials on the sphere 
(see, for example,~\cite{batemanII}).

\begin{theorem}
For each spherical harmonic $S_N^\ell$, where $N\ge 0$ and $1\le \ell
\le h(N)$ are integers, there
exists a polynomial $K_N^\ell \colon \R^{p+2} \to \R$ which is 
harmonic, i.e.
  \begin{align}
\nabla^2 K_N^\ell(x) = 0,
  \end{align}
for all $x\in \R^{p+2}$, and homogenous of degree $N$, i.e.
  \begin{align}
K_N^\ell(\lambda x) = \lambda^N K_N^\ell(x),
  \end{align}
for all $x \in \R^{p+2}$ and $\lambda\in \R$, such that
  \begin{align}
S_N^\ell(\xi) = K_N^\ell(\xi),
  \end{align}
for all $\xi \in S^{p+1}$.

\end{theorem}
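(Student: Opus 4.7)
The plan is to construct $K_N^\ell$ by homogeneous extension of $S_N^\ell$ off the sphere: set $K_N^\ell(x) := \|x\|^N S_N^\ell(x/\|x\|)$ for $x\ne 0$ and $K_N^\ell(0):=0$. Homogeneity of degree $N$ is then immediate for $\lambda \ge 0$ and, once $K_N^\ell$ is known to be a polynomial, extends to all real $\lambda$ by comparing polynomials on the half-line. Thus the real work is (i) showing $K_N^\ell$ is a polynomial on $\R^{p+2}$, and (ii) showing $\nabla^2 K_N^\ell = 0$ on $\R^{p+2}$.

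Both (i) and (ii) would follow from a dimension-counting argument based on the classical Fischer decomposition. Let $\mathcal{P}_N$ denote the space of homogeneous polynomials of degree $N$ in $p+2$ variables, and let $\mathcal{H}_N := \ker(\nabla^2 \colon \mathcal{P}_N \to \mathcal{P}_{N-2})$ be its subspace of harmonic elements. Equipping $\mathcal{P}_N$ with the inner product $\langle P,Q\rangle := [P(\partial)Q](0)$, one checks that multiplication by $\|x\|^2$ is adjoint to $\nabla^2$, producing the orthogonal decomposition $\mathcal{P}_N = \mathcal{H}_N \oplus \|x\|^2 \mathcal{P}_{N-2}$. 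Iterating yields $\mathcal{P}_N = \bigoplus_{k \ge 0} \|x\|^{2k} \mathcal{H}_{N-2k}$, and a short dimension count gives $\dim \mathcal{H}_N = \dim \mathcal{P}_N - \dim \mathcal{P}_{N-2} = h(N)$, matching the number of spherical harmonics of degree $N$ given in the statement.

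Next, applying the polar-form Laplacian $\nabla^2 f = r^{-(p+1)} \partial_r(r^{p+1} \partial_r f) + r^{-2} \Delta_{S^{p+1}} f$ to a homogeneous $P \in \mathcal{H}_N$ shows that $P|_{S^{p+1}}$ is an eigenfunction of $\Delta_{S^{p+1}}$ with eigenvalue $-N(N+p)$. Distinct eigenvalues for distinct $N$ give orthogonality of the subspaces $\mathcal{H}_N|_{S^{p+1}}$ in $L^2(S^{p+1})$, and the Stone--Weierstrass theorem (polynomials separate points on $S^{p+1}$) combined with the decomposition above shows $\bigoplus_N \mathcal{H}_N|_{S^{p+1}}$ is dense in $L^2(S^{p+1})$. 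Since the $S_N^\ell$ form an orthonormal basis indexed in a way that is compatible with the eigenstructure of $\Delta_{S^{p+1}}$, each $S_N^\ell$ must lie in $\mathcal{H}_N|_{S^{p+1}}$, and we take $K_N^\ell$ to be its unique preimage in $\mathcal{H}_N$; this $K_N^\ell$ is polynomial, harmonic, and homogeneous of degree $N$ by construction.

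The main obstacle is the Fischer decomposition: proving surjectivity of $\nabla^2 \colon \mathcal{P}_N \to \mathcal{P}_{N-2}$ via the adjoint identity above is short but is the only genuinely nontrivial algebra in the argument. Everything else reduces to standard bookkeeping with dimensions, the spherical-coordinate form of the Laplacian, and a density argument on $S^{p+1}$.
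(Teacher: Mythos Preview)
The paper does not prove this theorem at all: it is stated in Appendix~C with the remark ``see, for example,~\cite{batemanII}'' and no argument is given. So there is no ``paper's own proof'' to compare against; you have supplied a proof where the authors supplied only a citation.

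Your sketch is essentially the standard argument and is sound in outline. The Fischer decomposition $\mathcal{P}_N = \mathcal{H}_N \oplus \|x\|^2 \mathcal{P}_{N-2}$, the resulting dimension count $\dim\mathcal{H}_N = h(N)$, and the identification of $\mathcal{H}_N|_{S^{p+1}}$ with the eigenspace of $\Delta_{S^{p+1}}$ for eigenvalue $-N(N+p)$ are all correct and are exactly the ingredients one finds in Bateman or in any modern treatment. One point deserves care: the paper introduces the $S_N^\ell$ only as ``a set of real-valued continuous functions on $S^{p+1}$, orthonormal and complete in $L^2(S^{p+1})$,'' with the theorem itself serving to pin down what ``degree $N$'' means. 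Your final step (``since the $S_N^\ell$ form an orthonormal basis indexed in a way compatible with the eigenstructure of $\Delta_{S^{p+1}}$, each $S_N^\ell$ must lie in $\mathcal{H}_N|_{S^{p+1}}$'') therefore presupposes exactly the content being asserted. To close this, you should either take as the working definition that $S_N^\ell$ is an eigenfunction of $\Delta_{S^{p+1}}$ with eigenvalue $-N(N+p)$, in which case your argument shows the restriction map $\mathcal{H}_N \to L^2(S^{p+1})$ is a bijection onto that eigenspace, or simply note that the theorem is being used definitionally and your decomposition furnishes the required $h(N)$-dimensional space of harmonic homogeneous polynomials of each degree.
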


The following theorem is proved in, for example,~\cite{batemanII}.

\begin{theorem}
Suppose that $N$ is a nonnegative integer. Then
there are exactly
  \begin{align}
(2N+p) \frac{(N+p-1)!} {p!\,N!}
  \end{align}
linearly independent, harmonic, homogenous polynomials of 
degree $N$ in
$\R^{p+2}$.

\end{theorem}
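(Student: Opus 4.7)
The plan is to compute the dimension by a short exact sequence argument involving the Laplacian. Let $d = p+2$ and let $H_N$ denote the vector space of homogeneous polynomials of degree $N$ in $d$ variables. The harmonic homogeneous polynomials of degree $N$ form the kernel of the linear map $\nabla^2 : H_N \to H_{N-2}$, so by the rank-nullity theorem, the dimension in question equals
\begin{equation}
\dim H_N - \dim(\mathrm{image}\, \nabla^2).
\end{equation}
The task therefore splits into three parts: computing $\dim H_N$, showing that $\nabla^2 : H_N \to H_{N-2}$ is surjective, and doing the final arithmetic.

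First I would recall the standard stars-and-bars count $\dim H_N = \binom{N+d-1}{d-1} = \binom{N+p+1}{p+1}$. This is routine.

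The main obstacle is the surjectivity of $\nabla^2 : H_N \to H_{N-2}$. My approach here is to introduce the Fischer (apolar) inner product on polynomials, defined by $\langle p, q\rangle = \bigl(p(\partial)\, q\bigr)(0)$, where $p(\partial)$ denotes the differential operator obtained by substituting $\partial/\partial x_i$ for each $x_i$ in $p$. Under this pairing, the adjoint of the operator ``multiplication by $x_i$'' is $\partial/\partial x_i$, and consequently the adjoint of ``multiplication by $\|x\|^2 = x_1^2 + \cdots + x_d^2$'' is the Laplacian $\nabla^2$. The multiplication-by-$\|x\|^2$ map $H_{N-2} \to H_N$ is injective because the polynomial ring is an integral domain, and therefore its adjoint $\nabla^2 : H_N \to H_{N-2}$ is surjective. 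An equivalent decomposition that avoids naming the Fischer product explicitly is $H_N = \mathcal{H}_N \oplus \|x\|^2 H_{N-2}$, where $\mathcal{H}_N$ denotes the harmonic homogeneous polynomials of degree $N$; either formulation yields the conclusion.

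Given surjectivity, the desired dimension is
\begin{equation}
\binom{N+p+1}{p+1} - \binom{N+p-1}{p+1}.
\end{equation}
The last step is to simplify this, using
\begin{equation}
\binom{N+p+1}{p+1} - \binom{N+p-1}{p+1}
= \frac{(N+p-1)!}{(p+1)!\,N!}\bigl[(N+p)(N+p+1) - N(N-1)\bigr],
\end{equation}
and the bracket factors as $(p+1)(2N+p)$, which after cancellation yields the claimed formula $(2N+p)\,(N+p-1)!/(p!\,N!)$. For the edge cases $N=0$ and $N=1$, the subtracted binomial is interpreted as zero, and the formula can be verified directly against $\dim H_0 = 1$ and $\dim H_1 = d = p+2$.
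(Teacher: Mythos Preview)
Your proof is correct and self-contained. The paper, however, does not actually prove this theorem: it simply states the result and cites Bateman's \emph{Higher Transcendental Functions, Vol.~2} for a proof. So there is no ``paper's own proof'' to compare against; you have supplied one where the authors deferred to the literature.

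For what it is worth, the argument you give --- rank-nullity for $\nabla^2 : H_N \to H_{N-2}$, with surjectivity established via the Fischer/apolar inner product (equivalently, via the decomposition $H_N = \mathcal{H}_N \oplus \|x\|^2 H_{N-2}$) --- is the standard modern route, and your arithmetic checks out, including the edge cases $N=0,1$. The Bateman reference likely proceeds along similar lines or via an explicit basis count; either way, your approach is at least as clean.
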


The following theorem states that for any orthogonal matrix
$U$, the function $S_N^\ell(U\xi)$ is expressible as a linear combination
of $S_N^1(\xi), S_N^2(\xi), \ldots, S_N^{h(N)}(\xi)$ (see,
for example,~\cite{batemanII}).
\begin{theorem}
  \label{spher.rot}
Suppose that $N$ is a nonnegative integer, and that
$S_N^1,S_N^2,\ldots,S_N^{h(N)} \colon S^{p+1} \to \R$ are a
complete set of orthonormal spherical harmonics of degree $N$.
Suppose further that $U$ is a real orthogonal matrix
of dimension $p+2 \times p+2$. Then, for each integer
$1\le \ell \le h(N)$,
there exists real numbers $v_{\ell,1},v_{\ell,2},\ldots,v_{\ell,h(N)}$
such that
  \begin{align}
S_N^\ell(U\xi) = \sum_{k=1}^{h(N)} v_{\ell,k} S_N^k(\xi),
  \end{align}
for all $\xi \in S^{p+1}$. Furthermore, if $V$ is the $h(N)
\times h(N)$ real matrix with elements $v_{i,j}$ for all $1\le i,j \le
h(N)$, then $V$ is also orthogonal.

\end{theorem}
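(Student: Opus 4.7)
The plan is to exploit the correspondence (established in the first theorem of Appendix C) between spherical harmonics and harmonic homogeneous polynomials on $\R^{p+2}$, together with the dimension count stated in the second theorem of the appendix.

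First I would lift $S_N^\ell$ to its homogeneous harmonic extension $K_N^\ell$ on $\R^{p+2}$ and consider the composition $K_N^\ell \circ U$. Homogeneity is immediate: since $U$ is linear, $K_N^\ell(U(\lambda x)) = K_N^\ell(\lambda U x) = \lambda^N K_N^\ell(Ux)$. Harmonicity is the crux of the lifting argument, and I expect this to be the main technical step: a direct chain-rule computation gives
\begin{equation}
\nabla^2 (K_N^\ell \circ U)(x) = \operatorname{tr}\bigl(U^T (\operatorname{Hess} K_N^\ell)(Ux)\, U\bigr),
\end{equation}
and because $U$ is orthogonal, $U^T U = I$, so this trace collapses to $(\nabla^2 K_N^\ell)(Ux) = 0$. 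Thus $K_N^\ell \circ U$ is itself harmonic and homogeneous of degree $N$.

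Next, by the second theorem of Appendix C, the space of harmonic homogeneous polynomials of degree $N$ on $\R^{p+2}$ has dimension exactly $h(N)$, and $\{K_N^1,\ldots,K_N^{h(N)}\}$ is a basis for it (since their sphere restrictions are linearly independent by orthonormality). Therefore there exist unique real scalars $v_{\ell,1},\ldots,v_{\ell,h(N)}$ with
\begin{equation}
K_N^\ell(Ux) = \sum_{k=1}^{h(N)} v_{\ell,k}\, K_N^k(x)
\end{equation}
for all $x\in\R^{p+2}$; restricting to $\xi \in S^{p+1}$ yields the first claim of the theorem.

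Finally, for the orthogonality of $V$, I would use the invariance of the surface measure on $S^{p+1}$ under the orthogonal action of $U$. For any $1\le \ell,m\le h(N)$,
\begin{equation}
\delta_{\ell,m} = \int_{S^{p+1}} S_N^\ell(\xi) S_N^m(\xi)\, d\sigma(\xi) = \int_{S^{p+1}} S_N^\ell(U\xi) S_N^m(U\xi)\, d\sigma(\xi),
\end{equation}
and substituting the expansion gives
\begin{equation}
\delta_{\ell,m} = \sum_{k,j=1}^{h(N)} v_{\ell,k} v_{m,j} \int_{S^{p+1}} S_N^k(\xi) S_N^j(\xi)\, d\sigma(\xi) = \sum_{k=1}^{h(N)} v_{\ell,k} v_{m,k},
\end{equation}
which is exactly $(VV^T)_{\ell,m} = \delta_{\ell,m}$. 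Hence $V$ is orthogonal, completing the proof.
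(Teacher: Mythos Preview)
Your argument is correct and is the standard proof of this classical result. Note, however, that the paper does not actually supply its own proof of this theorem: it simply states the result and cites \cite{batemanII} for a proof. So there is no in-paper argument to compare your approach against; you have filled in precisely the kind of proof one finds in that reference, using the polynomial lift from the first theorem of Appendix~C and the dimension count from the second.
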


\begin{remark}
From Theorem~(\ref{spher.rot}), we observe that the space of linear combinations
of functions $S_N^\ell$ is invariant under all rotations and reflections
of $S^{p+1}$.

\end{remark}

The following theorem states that if an integral operator acting on the
space of functions $S^{p+1}\to \R$ has a kernel depending only on the inner
product, then the spherical harmonics are eigenfunctions of that
operator (see, for example,~\cite{batemanII}).
\begin{theorem}[Funk-Hecke]
Suppose that $F\colon [-1,1] \to \R$ is a continuous function, and that
$S_N\colon S^{p+1}\to \R$ is any spherical harmonic of degree $N$.
Then
  \begin{align}
\int_\Omega F(\inner{\xi}{\eta}) S_N(\xi) \, d\Omega(\xi) = 
  \lambda_N S_N(\eta),
  \end{align}
for all $\eta\in S^{p+1}$, where $\inner{\cdot}{\cdot}$ denotes
the inner product in $\R^{p+2}$,  the integral is taken
over the whole area of the hypersphere $\Omega$, and
$\lambda_N$ depends only on the function $F$. 
\end{theorem}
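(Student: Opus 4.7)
The plan is to leverage the rotational invariance of the kernel $F(\langle \xi, \eta\rangle)$ together with the rotational invariance of the span of degree-$N$ spherical harmonics, as established in Theorem~\ref{spher.rot}. First, I would introduce the integral operator $T_F$ on continuous functions on $S^{p+1}$ defined by $(T_F g)(\eta) = \int_\Omega F(\langle\xi,\eta\rangle)\, g(\xi)\, d\Omega(\xi)$, and verify that $T_F$ intertwines the $O(p+2)$ action in the sense that $T_F(g\circ U)(\eta) = (T_F g)(U\eta)$ for every orthogonal $(p+2)\times(p+2)$ matrix $U$. This is immediate from the change of variables $\xi \mapsto U\xi$ together with the rotational invariance of both the Euclidean inner product and the surface measure $d\Omega$.

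Second, I would combine this intertwining property with Theorem~\ref{spher.rot} to show that $T_F$ maps the space $\mathcal{H}_N = \operatorname{span}\{S_N^1,\ldots,S_N^{h(N)}\}$ into itself. The decomposition $L^2(S^{p+1}) = \bigoplus_N \mathcal{H}_N$ is into pairwise inequivalent irreducible $O(p+2)$-representations; since $T_F$ commutes with rotations, Schur's lemma forces the restriction $T_F|_{\mathcal{H}_N}$ to be a scalar multiple of the identity. Calling this scalar $\lambda_N$ gives $T_F S_N = \lambda_N S_N$ for every $S_N \in \mathcal{H}_N$, which is exactly the claim, and makes it manifest that $\lambda_N$ depends only on $F$ (and $N$), not on the particular harmonic $S_N$ or the point $\eta$.

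If one prefers a more concrete route, the same conclusion can be reached without invoking Schur's lemma. For a fixed $\eta$, the function $\xi \mapsto F(\langle\xi,\eta\rangle)$ is continuous on $S^{p+1}$ and invariant under the rotations fixing $\eta$. Its $\mathcal{H}_M$-component must therefore lie in the subspace of $\mathcal{H}_M$ fixed by the stabilizer of $\eta$, which one shows is one-dimensional and spanned by a zonal harmonic of the form $Z_M(\xi,\eta) = c_M\, C_M(\langle\xi,\eta\rangle)$ with $C_M$ an appropriate Gegenbauer polynomial. Substituting this expansion into $T_F S_N$ and using the orthogonality between $\mathcal{H}_N$ and $\mathcal{H}_M$ for $M\neq N$, only the $M=N$ term survives; the zonal reproducing identity $\int_\Omega Z_N(\xi,\eta) S_N(\xi)\,d\Omega(\xi) = S_N(\eta)$ then yields the stated eigenvalue relation, with $\lambda_N$ expressible as a one-dimensional integral of $F$ against a Gegenbauer weight on $[-1,1]$.

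The main obstacle is the step that identifies $T_F|_{\mathcal{H}_N}$ as a scalar. In the first approach this rests on the (standard but nontrivial) irreducibility of the degree-$N$ harmonics as an $O(p+2)$-representation, which is not proved in the paper; in the second approach the corresponding work is packaged into the addition theorem for spherical harmonics, i.e., the identification of the rotation-invariant subspace of $\mathcal{H}_M$ with pole at $\eta$ as the span of a single Gegenbauer polynomial of $\langle\xi,\eta\rangle$. Once either of these structural facts is in hand, everything else is a routine application of Fubini's theorem and the orthogonality of surface harmonics.
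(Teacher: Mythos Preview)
Your proposal is correct and follows one of the two standard routes to the Funk--Hecke theorem; the identification of the obstacles (irreducibility of $\mathcal{H}_N$ in the Schur approach, or the addition theorem in the zonal approach) is accurate.

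However, there is nothing to compare against: the paper does not prove this theorem. It is stated in Appendix~C with a reference to~\cite{batemanII} and no accompanying proof. So your write-up supplies strictly more than the paper does on this point. If you want to align with the paper's treatment, a citation suffices; if you want to include a proof, either of your sketches would work, though the second (zonal/Gegenbauer) route has the advantage of producing an explicit formula for $\lambda_N$ as a weighted integral of $F$ against a Gegenbauer polynomial, which is the form the Funk--Hecke theorem is usually stated in and which the paper's version omits.
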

\section{Appendix D: The Shifted Jacobi Polynomials $P_n^{(k,0)}(2x-1)$}
\label{secxkpol}
In this section, we introduce a class of Jacobi polynomials
that can be used as quadrature and interpolation nodes for 
Zernike polynomials in $\R^{p+2}$. 

We define $\widetilde{P}_n^k(x)$ to be the shifted Jacobi 
polynomials on the interval $[0,1]$ defined by the formula
\begin{equation}\label{760.2}
\widetilde{P}_n^k(x)=\sqrt{k+2n+1}P_n^{(k,0)}(1-2x)
\end{equation}
where $k>-1$ is a real number and where $P_n^{(k,0)}$ is 
defined in (\ref{192}). It follows immediately from (\ref{760.2}) 
that $\widetilde{P}_n^k(x)$ are orthogonal with respect to 
weight function $x^k$. That is, for all non-negative 
integers $n$, the Jacobi polynomial $\widetilde{P}_n^k$ is a 
polynomial of degree $n$ such that
\begin{equation}\label{740.2}
\int_0^1\widetilde{P}_i^k(x)\widetilde{P}_j^k(x)x^kdx=\delta_{i,j}
\end{equation}
for all non-negative integers $i,j$ where $k>-1$.\\
The following lemma, which follows immediately from the 
combination of Lemma \ref{205} and (\ref{760.2}), 
provides a differential equation satisfied by $\widetilde{P}_n^k$.
\begin{lemma}\label{780}
Let $k>-1$ be a real number and let $n$ be a non-negative
integer. Then, $\widetilde{P}_n^k$ satisfies the differential equation,
\begin{equation}\label{800}
r-r^2\widetilde{P}_n^{k\prime\prime}(r)
+(k-rk+1-2r)\widetilde{P}_n^{k\prime}(r)+
n(n+k+1)\widetilde{P}_n^k(r)=0.
\end{equation}
for all $r\in (0,1)$.
\end{lemma}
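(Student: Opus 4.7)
The plan is to derive the differential equation for $\widetilde{P}_n^k$ by a direct change of variables in the Jacobi differential equation stated in Lemma~\ref{205}. Specifically, since $\widetilde{P}_n^k(r) = \sqrt{k+2n+1}\, P_n^{(k,0)}(1-2r)$ by (\ref{760.2}), the substitution $x = 1-2r$ should carry (\ref{210}) into (\ref{800}) up to a multiplicative constant, which the ODE absorbs because it is linear and homogeneous.

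First I would set $Q(r) := P_n^{(k,0)}(1-2r)$ and apply the chain rule to record
\begin{equation}
P_n^{(k,0)\prime}(1-2r) = -\tfrac{1}{2} Q^\prime(r),
\qquad
P_n^{(k,0)\prime\prime}(1-2r) = \tfrac{1}{4} Q^{\prime\prime}(r).
\end{equation}
I would then compute the two coefficients appearing in Lemma~\ref{205} under this substitution. The leading coefficient becomes $1-x^2 = 1 - (1-2r)^2 = 4r - 4r^2 = 4(r-r^2)$, and the first-order coefficient becomes $-k-(k+2)x = -k-(k+2)(1-2r) = -2(k+1) + 2(k+2)r$.

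Next I would substitute these into (\ref{210}) evaluated at $x=1-2r$. The factor of $4$ from $1-x^2$ cancels with the factor of $1/4$ from $Q^{\prime\prime}$, and the factor of $2$ from the first-order coefficient cancels with the $-1/2$ from $Q^\prime$ (with a sign flip producing the desired plus sign). This yields
\begin{equation}
(r-r^2) Q^{\prime\prime}(r) + \bigl((k+1) - (k+2)r\bigr) Q^\prime(r) + n(n+k+1) Q(r) = 0,
\end{equation}
and observing that $(k+1)-(k+2)r = k - rk + 1 - 2r$ rewrites the first-order coefficient in the form stated in (\ref{800}).

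Finally, since (\ref{760.2}) gives $\widetilde{P}_n^k(r) = \sqrt{k+2n+1}\, Q(r)$ and the ODE above is linear and homogeneous, multiplying through by the constant $\sqrt{k+2n+1}$ preserves the identity and produces (\ref{800}). No step is a serious obstacle; the only thing to be careful about is the sign and factor bookkeeping in the chain rule, which is why I would organize the derivation by first reducing everything to $Q$ and only at the end reintroducing the normalization constant.
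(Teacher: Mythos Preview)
Your proposal is correct and follows exactly the route the paper indicates: the paper states that Lemma~\ref{780} follows immediately from combining the Jacobi differential equation in Lemma~\ref{205} with the definition~(\ref{760.2}), and you have carried out precisely that change of variables with the bookkeeping done correctly.
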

The following recurrence for $\widetilde{P}_n^k$ follows 
readily from the combination of Lemma \ref{760.2} and 
(\ref{195}).
\begin{lemma}\label{900.2}
For all non-negative integers $n$ and for all real numbers $k>-1$,
\begin{equation}\label{920}
\begin{split}
\hspace*{-6em}
\widetilde{P}_{n+1}^{k}(r)&=
\frac{(2n+N+1)N^2+(2n+N)(2n+N+1)(2n+N+2)(1-2r)}{2(n+1)(n+N+1)(2n+N)}\\
&\cdot \frac{\sqrt{2n+k+1}}{\sqrt{2(n+1)+k+1}}
\widetilde{P}_n^{k}(r)\\
&-\frac{2(n+N)(n)(2n+N+2)}{2(n+1)(n+N+1)(2n+N)}
\frac{\sqrt{2(n-1)+k+1}}{\sqrt{2(n+1)+k+1}}
\widetilde{P}_{n-1}^{k}(r)
\end{split}
\end{equation}
\end{lemma}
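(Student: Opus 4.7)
The plan is to derive the recurrence directly from the normalization identity (\ref{760.2}) together with the classical Jacobi recurrence (\ref{195}), matching coefficients carefully. Since $\widetilde{P}_n^k(r) = \sqrt{k+2n+1}\, P_n^{(k,0)}(1-2r)$, every shifted-normalized polynomial is just an explicit scalar multiple of the standard Jacobi polynomial evaluated at $1-2r$. So the whole proof reduces to a substitution followed by bookkeeping of the $\sqrt{k+2m+1}$ factors.

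First, I would take the three-term recurrence (\ref{195}), replace the parameter $N$ by $k$, and substitute $x = 1-2r$. This yields an identity of the form
\begin{equation*}
P_{n+1}^{(k,0)}(1-2r) = A_n(r)\, P_n^{(k,0)}(1-2r) - B_n\, P_{n-1}^{(k,0)}(1-2r),
\end{equation*}
where $A_n(r)$ and $B_n$ are exactly the rational functions appearing in the stated lemma (before the square-root factors are attached). Next, I would multiply both sides by $\sqrt{k+2(n+1)+1}$, which converts the left-hand side into $\widetilde{P}_{n+1}^k(r)$ by the definition (\ref{760.2}).

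The remaining step is to rewrite $P_n^{(k,0)}(1-2r)$ as $\widetilde{P}_n^k(r)/\sqrt{k+2n+1}$ and $P_{n-1}^{(k,0)}(1-2r)$ as $\widetilde{P}_{n-1}^k(r)/\sqrt{k+2(n-1)+1}$. This introduces the ratios $\sqrt{k+2(n+1)+1}/\sqrt{k+2n+1}$ and $\sqrt{k+2(n+1)+1}/\sqrt{k+2(n-1)+1}$ as prefactors on the two terms, yielding precisely the form claimed in the statement. The base cases $n=0$ and the treatment of the $n-1$ term when $n=0$ follow from the standard initial conditions for $P_0^{(k,0)}$ and $P_1^{(k,0)}$ together with the normalization in (\ref{760.2}).

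There is no real obstacle here: the argument is a mechanical renormalization of a known recurrence. The only thing to watch carefully is the direction of the square-root ratios, since the normalization constant depends on the index and must be tracked consistently when moving between $P_m^{(k,0)}$ and $\widetilde{P}_m^k$ for $m \in \{n-1, n, n+1\}$. Once those factors are aligned, the identity (\ref{920}) follows directly from (\ref{195}).
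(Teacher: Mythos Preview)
Your approach is exactly what the paper does: it states that the recurrence ``follows readily from the combination of'' the definition (\ref{760.2}) and the Jacobi recurrence (\ref{195}), with no further argument given. One small point: the prefactors you correctly derive, namely $\sqrt{k+2(n+1)+1}/\sqrt{k+2n+1}$ and $\sqrt{k+2(n+1)+1}/\sqrt{k+2(n-1)+1}$, are in fact the \emph{reciprocals} of the ratios printed in (\ref{920}), so your claim that this ``yields precisely the form claimed in the statement'' is not literally true --- the printed formula has the square-root factors inverted, and the caution you yourself raise about tracking their direction applies here.
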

\subsection{Numerical Evaluation of the Shifted Jacobi Polynomials}\label{secxkeval}
The following observations provide a way to evaluate 
$\widetilde{P}_n^k$ and its derivatives.
\begin{observation}\label{765}
Combining (\ref{195}) with (\ref{760.2}), we observe that 
$\widetilde{P}_n^k(x)$ can be evaluated by 
first evaluating $P_n^{(k,0)}(1-2x)$ via recurrence relation 
(\ref{195}) and then multiplying the resulting 
number by
\begin{equation}
\sqrt{k+2n+1}.
\end{equation}
\end{observation}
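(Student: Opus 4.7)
The plan is to unpack the definition of $\widetilde{P}_n^k$ given in (\ref{760.2}) and then appeal to the already-established three-term recurrence (\ref{195}) for the Jacobi polynomials $P_n^{(k,0)}$, evaluated at the shifted argument $y = 1-2x$. Since (\ref{760.2}) reads
\begin{equation*}
\widetilde{P}_n^k(x) = \sqrt{k+2n+1}\, P_n^{(k,0)}(1-2x),
\end{equation*}
the evaluation of $\widetilde{P}_n^k(x)$ decouples cleanly into (i) computing the Jacobi polynomial value $P_n^{(k,0)}(1-2x)$, and (ii) rescaling by the constant $\sqrt{k+2n+1}$.

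For step (i), I would proceed iteratively. The closed-form initial values provided for the recurrence, $P_0^{(k,0)} \equiv 1$ and $P_1^{(k,0)}(y) = (k+(k+2)y)/2$, are evaluated directly at $y = 1-2x$; then (\ref{195}) is applied repeatedly, at each stage producing $P_{j+1}^{(k,0)}(1-2x)$ from $P_j^{(k,0)}(1-2x)$ and $P_{j-1}^{(k,0)}(1-2x)$, until $j = n-1$. Step (ii) is a single multiplication by the pre-computed scalar $\sqrt{k+2n+1}$.

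There is essentially no analytical obstacle: the observation is a direct consequence of substituting the definition into the recurrence, and the stability and validity of the recurrence itself are already established in the preceding lemma. The only mild subtlety is that (\ref{195}) was stated for integer $N \ge 0$, whereas the present observation allows arbitrary real $k > -1$; however, the standard Jacobi three-term recurrence is known to hold for any real first parameter greater than $-1$, and (\ref{195}) is simply its specialization, so extending the formula from integer $N$ to real $k$ introduces no new content beyond the standard theory of Jacobi polynomials.
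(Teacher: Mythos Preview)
Your proposal is correct and matches the paper's approach exactly: the observation in the paper is not given a separate proof, since the reasoning (``Combining (\ref{195}) with (\ref{760.2})'') is already stated inline, and your unpacking of the definition followed by application of the recurrence is precisely that. Your remark about the extension from integer $N$ to real $k>-1$ is a reasonable point of care, though the paper silently relies on the standard Jacobi recurrence holding in that generality.
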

\begin{observation}\label{767}
Combining (\ref{197}) with (\ref{760.2}), we observe that 
the polynomial $\widetilde{P}_n^{k\prime}(x)$ (see (\ref{760.2})), 
can be evaluated by first evaluating $P_n^{(k,0)\prime}(1-2x)$ 
via recurrence relation (\ref{197}) and then multiplying
the resulting number by 
\begin{equation}
-2\sqrt{k+2n+1}.
\end{equation}
\end{observation}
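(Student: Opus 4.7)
The plan is to start from the definition $\widetilde{P}_n^k(x) = \sqrt{k+2n+1}\, P_n^{(k,0)}(1-2x)$ given in equation (\ref{760.2}), and differentiate both sides with respect to $x$. By the chain rule applied to the composition of $P_n^{(k,0)}$ with the affine map $x \mapsto 1-2x$, the right-hand side becomes $\sqrt{k+2n+1}\,(-2)\, P_n^{(k,0)\prime}(1-2x)$, where the factor $-2$ is precisely $\tfrac{d}{dx}(1-2x)$ and $P_n^{(k,0)\prime}$ denotes the derivative of $P_n^{(k,0)}$ with respect to its own argument. This yields the identity
\begin{equation*}
\widetilde{P}_n^{k\prime}(x) \;=\; -2\sqrt{k+2n+1}\, P_n^{(k,0)\prime}(1-2x),
\end{equation*}
which is exactly the claim of the observation.

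To turn this identity into a numerical evaluation procedure, the second step is to invoke Lemma \ref{196}: its recurrence (\ref{197}), applied with $N=k$ and argument $y=1-2x$, produces $P_n^{(k,0)\prime}(y)$ in $O(n)$ stable operations starting from the base cases $P_0^{(k,0)\prime}=0$ and $P_1^{(k,0)\prime}=(k+2)/2$ (with the companion values of $P_n^{(k,0)}$ supplied in parallel by the recurrence of Lemma stated just before, since (\ref{197}) couples the derivative to the undifferentiated polynomial). Multiplying the output by $-2\sqrt{k+2n+1}$ then returns $\widetilde{P}_n^{k\prime}(x)$, as claimed.

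There is essentially no obstacle in the argument itself: it is a one-line chain-rule computation glued to a recurrence that has already been established. The only point requiring care is bookkeeping of the sign and the normalizing scalar, namely verifying that the factor $-2$ from the inner derivative and the factor $\sqrt{k+2n+1}$ from the normalization in (\ref{760.2}) combine to give exactly $-2\sqrt{k+2n+1}$ in front of $P_n^{(k,0)\prime}(1-2x)$; a direct inspection confirms this, matching the statement.
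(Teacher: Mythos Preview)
Your proposal is correct and matches the paper's approach exactly: the observation is just the chain rule applied to the definition (\ref{760.2}), producing the factor $-2\sqrt{k+2n+1}$, together with the already-established recurrence (\ref{197}) for $P_n^{(k,0)\prime}$. The paper does not give any argument beyond the phrase ``Combining (\ref{197}) with (\ref{760.2}),'' so your write-up is in fact more detailed than the original.
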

\subsection{Pr{\"u}fer Transform}\label{secprufer}
In this section, we describe the Pr{\"u}fer Transform, 
which will be used in Section \ref{secxkquad}. A more detailed 
description of the Pr{\"u}fer Transform can be found in \cite{20}. 

\begin{lemma}[Pr{\"u}fer Transform]\label{380}
Suppose that the function $\phi: [a,b] \rightarrow \R$ 
satisfies the differential equation
\begin{equation}\label{400}
\phi^{\prime\prime}(x)+\alpha(x)\phi^{\prime}(x)
+\beta(x)\phi(x)=0,
\end{equation}
where $\alpha,\beta:(a,b) \rightarrow \R$ are differential functions.
Then,
\begin{equation}\label{440}
\frac{d\theta}{dx}=-\sqrt{\beta(x)}-\left(\frac{\beta^{\prime}(x)}
{4\beta(x)}+\frac{\alpha(x)}{2}\right)sin(2\theta),
\end{equation}
where the function $\theta :[a,b]\rightarrow \R$ is defined 
by the formula,
\begin{equation}\label{420}
\frac{\phi^\prime(x)}{\phi(x)}=\sqrt{\beta(x)}\tan(\theta(x)).
\end{equation}
\end{lemma}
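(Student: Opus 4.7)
The plan is to differentiate the defining relation~(\ref{420}) with respect to $x$, use the ODE~(\ref{400}) to eliminate $\phi''$, and then solve algebraically for $\theta'$, reducing everything to $\sin(2\theta)$ via standard trigonometric identities.

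First, I would compute the derivative of the left-hand side of~(\ref{420}) directly:
\begin{equation}
\frac{d}{dx}\Bigl(\frac{\phi'}{\phi}\Bigr) = \frac{\phi''}{\phi} - \Bigl(\frac{\phi'}{\phi}\Bigr)^2.
\end{equation}
Using~(\ref{400}) to substitute $\phi''/\phi = -\alpha(x)(\phi'/\phi) - \beta(x)$, and then using~(\ref{420}) to replace $\phi'/\phi$ by $\sqrt{\beta(x)}\tan(\theta(x))$, this becomes
\begin{equation}
-\alpha(x)\sqrt{\beta(x)}\tan(\theta) - \beta(x) - \beta(x)\tan^2(\theta) = -\alpha(x)\sqrt{\beta(x)}\tan(\theta) - \beta(x)\sec^2(\theta).
\end{equation}

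Next, I would differentiate the right-hand side of~(\ref{420}) using the chain rule:
\begin{equation}
\frac{d}{dx}\bigl(\sqrt{\beta(x)}\tan(\theta(x))\bigr) = \frac{\beta'(x)}{2\sqrt{\beta(x)}}\tan(\theta) + \sqrt{\beta(x)}\sec^2(\theta)\cdot\theta'(x).
\end{equation}
Equating the two expressions and solving for $\theta'(x)$, I would divide through by $\sqrt{\beta(x)}\sec^2(\theta)$ to obtain
\begin{equation}
\theta'(x) = -\sqrt{\beta(x)} - \Bigl(\frac{\alpha(x)}{1} + \frac{\beta'(x)}{2\beta(x)}\Bigr)\frac{\tan(\theta)}{2\sec^2(\theta)} \cdot 2.
\end{equation}
Finally, I would simplify using $\tan(\theta)/\sec^2(\theta) = \sin(\theta)\cos(\theta)$ and $2\sin(\theta)\cos(\theta) = \sin(2\theta)$, which collapses the expression into the stated form~(\ref{440}).

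There is no significant obstacle; the proof is essentially a careful bookkeeping exercise. The only place where small slips are likely is in the algebraic coefficients — in particular, keeping track of the factor $\tfrac{1}{2}$ from differentiating $\sqrt{\beta}$ and the factor $\tfrac{1}{2}$ from the double-angle identity, which together produce the $\beta'/(4\beta)$ term while $\alpha$ only picks up one factor of $\tfrac{1}{2}$, matching the asymmetric coefficients in~(\ref{440}). Implicitly I am assuming $\beta(x) > 0$ on $(a,b)$ and $\phi(x) \neq 0$ where $\theta$ is defined, which is standard for the Prüfer transform.
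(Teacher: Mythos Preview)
Your proposal is correct and follows essentially the same approach as the paper: both differentiate the defining relation $\phi'/\phi=\sqrt{\beta}\tan\theta$, use the ODE to eliminate $\phi''$, and simplify via $\tan\theta/\sec^2\theta=\tfrac{1}{2}\sin(2\theta)$. The only cosmetic difference is that the paper introduces the intermediate variable $z=\phi'/\phi$ and treats the amplitude $\gamma$ in $z=\gamma\tan\theta$ as initially undetermined before choosing $\gamma=\sqrt{\beta}$, whereas you work directly with $\sqrt{\beta}$ from the start.
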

\begin{proof}
Introducing the notation
\begin{equation}\label{460}
z(x)=\frac{\phi^{\prime}(x)}{\phi(x)}
\end{equation}
for all $x\in [a,b]$, and differentiating (\ref{460}) 
with respect to $x$, we obtain the identity
\begin{equation}\label{480}
\frac{\phi^{\prime\prime}}{\phi}=\frac{dz}{dx}+z^2(x).
\end{equation}
Substituting (\ref{480}) and (\ref{460}) into (\ref{400}),
we obtain,
\begin{equation}\label{500}
\frac{dz}{dx}=-(z^2(x)+\alpha(x)z(x)+\beta(x)).
\end{equation}
Introducing the notation,
\begin{equation}\label{520}
z(x)=\gamma(x)\tan(\theta(x)),
\end{equation}
with $\theta,\gamma$ two unknown functions, 
we differentiate (\ref{520}) and observe that,
\begin{equation}\label{540}
\frac{dz}{dx}=\gamma(x)\frac{\theta^{\prime}}{\cos^2(\theta)}
+\gamma^{\prime}(x)\tan(\theta(x))
\end{equation}
and squaring both sides of (\ref{520}), we obtain
\begin{equation}\label{560}
z(x)^2=\tan^2(\theta(x))\gamma(x)^2.
\end{equation}
Substituting (\ref{540}) and (\ref{560}) into 
(\ref{500}) and choosing 
\begin{equation}\label{580}
\gamma(x)=\sqrt{\beta(x)}
\end{equation}
we obtain
\begin{equation}\label{600}
\frac{d\theta}{dx}=-\sqrt{\beta(x)}-\left(\frac{\beta^{\prime}(x)}
{4\beta(x)}+\frac{\alpha(x)}{2}\right)sin(2\theta).
\end{equation}
\end{proof}
\begin{remark}\label{620}
The Pr{\"u}fer Transform is often used in algorithms for finding
the roots of oscillatory special functions.
Suppose that $\phi: [a,b] \rightarrow \R$ is a special 
function satisfying differential 
equation (\ref{400}). It turns out that in most cases, coefficient
\begin{equation}\label{640}
\beta(x)
\end{equation}
in (\ref{400}) is significantly larger than 
\begin{equation}\label{660}
\frac{\beta^{\prime}(x)}{4\beta(x)}+\frac{\alpha(x)}{2}
\end{equation}
on the interval $[a,b]$, where $\alpha$ and $\beta$ 
are defined in (\ref{400}).

Under these conditions, the function $\theta$ (see (\ref{420})), 
is monotone and its derivative neither approaches infinity nor $0$.
Furthermore, finding the roots of $\phi$ is equivalent to 
finding $x \in [a,b]$ such that
\begin{equation}\label{670}
\theta(x)=\pi/2+k\pi
\end{equation}
for some integer $k$.
Consequently, we can find the roots of $\phi$ by solving 
well-behaved differential equation (\ref{600}).
\end{remark}

\begin{remark}\label{680}
If for all $x\in [a,b]$, the function $\sqrt{\beta(x)}$ satisfies
\begin{equation}\label{700}
\sqrt{\beta(x)}>
\frac{\beta^{\prime}(x)}
{4\beta(x)}+\frac{\alpha(x)}{2},
\end{equation}
then, for all $x\in [a,b]$, we have 
$\frac{d\theta}{dx} < 0$ (see (\ref{440}))
and we can view $x:[-\pi,\pi]\rightarrow \R$
as a function of $\theta$ where $x$ satisfies the first order
differential equation
\begin{equation}\label{720}
\frac{dx}{d\theta}=\left(-\sqrt{\beta(x)}-\left(\frac{\beta^{\prime}(x)}
{4\beta(x)}+\frac{\alpha(x)}{2}\right)sin(2\theta)\right)^{-1}.
\end{equation}
\end{remark}
\subsection{Roots of the Shifted Jacobi Polynomials}\label{secxkquad}
The primary purpose of this section is to describe an algorithm
for finding the roots of the Jacobi polynomials $\widetilde{P}_n^k$. 
These roots will be used in Section \ref{seczernquad}
for the design of quadratures for Zernike Polynomials. \\
The following lemma follows immediately from applying the Prufer
Transform (see Lemma \ref{380}) to (\ref{800}). 
\begin{lemma}\label{900}
For all non-negative integers $n$, real $k>-1$, and $r\in(0,1)$,
\begin{equation}\label{eq1140}
\frac{d\theta}{dr}=
-\left(\frac{n(n+k+1)}{r-r^2}\right)^{1/2}
-\left(
\frac{1-2r+2k-2kr}{4(r-r^2)}
\right)
\sin(2\theta(r)).
\end{equation}
where the function $\theta:(0,1) \rightarrow \R$ is defined 
by the formula
\begin{equation}\label{eq1200}
\frac{\widetilde{P}_n^k(r)}{\widetilde{P}_n^{k\prime}(r)}=
\left(\frac{n(n+k+1)}{r-r^2}\right)^{1/2}\tan(\theta(r)),
\end{equation}
where $\widetilde{P}_n^k$ is defined in (\ref{740.2}).
\end{lemma}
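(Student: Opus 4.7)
The plan is to apply the Pr{\"u}fer Transform (Lemma \ref{380}) directly to the differential equation (\ref{800}) satisfied by $\widetilde{P}_n^k$, and then to simplify the resulting coefficients. First I would rewrite equation (\ref{800}), after dividing through by $r - r^2$, in the standard second-order form
\begin{equation*}
\widetilde{P}_n^{k\prime\prime}(r) + \alpha(r)\,\widetilde{P}_n^{k\prime}(r) + \beta(r)\,\widetilde{P}_n^{k}(r) = 0,
\end{equation*}
with
\begin{equation*}
\alpha(r) = \frac{k - kr + 1 - 2r}{r - r^2}, \qquad \beta(r) = \frac{n(n+k+1)}{r-r^2},
\end{equation*}
so that the hypothesis of Lemma \ref{380} is satisfied on $(0,1)$ (where $r-r^2>0$).

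Applying Lemma \ref{380} to this reformulation immediately yields
\begin{equation*}
\frac{d\theta}{dr} = -\sqrt{\beta(r)} - \left(\frac{\beta^\prime(r)}{4\beta(r)} + \frac{\alpha(r)}{2}\right)\sin(2\theta(r)),
\end{equation*}
with $\theta$ characterized by $\widetilde{P}_n^{k\prime}(r)/\widetilde{P}_n^{k}(r) = \sqrt{\beta(r)}\tan(\theta(r))$, which is equivalent (via a cotangent identity) to the form stated in (\ref{eq1200}). The first term gives exactly $-(n(n+k+1)/(r-r^2))^{1/2}$, matching the first term on the right side of (\ref{eq1140}).

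The only remaining work — and the main (purely mechanical) obstacle — is to verify that
\begin{equation*}
\frac{\beta^\prime(r)}{4\beta(r)} + \frac{\alpha(r)}{2} = \frac{1 - 2r + 2k - 2kr}{4(r - r^2)}.
\end{equation*}
Here I would compute $\beta^\prime(r)/\beta(r)$ by logarithmic differentiation of $\beta(r) = n(n+k+1)(r-r^2)^{-1}$, obtaining $\beta^\prime/\beta = (2r-1)/(r-r^2)$, so that $\beta^\prime/(4\beta) = (2r-1)/(4(r-r^2))$. Adding $\alpha(r)/2 = (k - kr + 1 - 2r)/(2(r-r^2))$ over the common denominator $4(r-r^2)$ collapses to $(2r - 1 + 2k - 2kr + 2 - 4r)/(4(r-r^2)) = (1 - 2r + 2k - 2kr)/(4(r-r^2))$, which is precisely the coefficient of $\sin(2\theta)$ in (\ref{eq1140}). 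Combining the two computations completes the proof.
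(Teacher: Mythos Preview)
Your approach is exactly what the paper does: its entire proof is the remark that the lemma ``follows immediately from applying the Pr\"ufer Transform (see Lemma~\ref{380}) to~(\ref{800}),'' and you have simply supplied the omitted arithmetic, which is correct.

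One caveat worth flagging: the ratio in~(\ref{eq1200}) is printed as $\widetilde{P}_n^k/\widetilde{P}_n^{k\prime}$, inverted relative to the defining relation~(\ref{420}) of Lemma~\ref{380}, and these two choices of $\theta$ are \emph{not} related by a cotangent shift as you claim (replacing $\theta$ by $\pi/2-\theta$ in~(\ref{420}) yields $\phi/\phi'=\cot\theta/\sqrt{\beta}$, not $\sqrt{\beta}\tan\theta$). This is almost certainly a typographical slip in the statement rather than something your argument must reconcile, and your derivation of~(\ref{eq1140}) is unaffected; but the parenthetical justification you gave for the equivalence is not right and should be dropped.
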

\begin{remark}
For any non-negative integer $n$,
\begin{equation}
\frac{d\theta}{dr}<0
\end{equation}
for all $r\in(0,1)$. Therefore, applying Remark \ref{680} to 
(\ref{eq1140}), we can view $r$ as a function of $\theta$ where
$r$ satisfies the differential equation
\begin{equation}\label{eq1150}
\frac{dr}{d\theta}=
\left(
-\left(\frac{n(n+k+1)}{r-r^2}\right)^{1/2}
-\left(
\frac{1-2r+2k-2kr}{4(r-r^2)}
\right)
\sin(2\theta(r))
\right)^{-1}.
\end{equation}
\end{remark}
\subsubsection{Algorithm}\label{secalg}
In this section, we describe an algorithm for the evaluation
of the $n$ roots of $\widetilde{P}_n^k$. We denote 
the $n$ roots of $\widetilde{P}_n^k$ by $r_1<r_2<...<r_n$.\\\\
Step 1. Choose a point, $x_0 \in (0,1)$, that is greater than 
the largest root of $\widetilde{P}_n^k$. For example, for all $k \geq 1$, 
the following choice of $x_0$ will be sufficient:
  \begin{align}
x_0 = \left\{
  \begin{array}{ll}
  1-10^{-6} & \mbox{if $n <10^3$}, \\
  1-10^{-8} & \mbox{if $10^3 \leq n <10^4$}, \\
  1-10^{-10} & \mbox{if $10^4 \leq n <10^5$}.
  \end{array}
\right.
  \end{align}
\\\\
Step 2. Defining $\theta_0$ by the formula
\begin{equation}
\theta_0=\theta(x_0),
\end{equation}
where $\theta$ is defined in (\ref{eq1200}),
solve the ordinary differential equation $\frac{dr}{d\theta}$
(see (\ref{eq1150})) on the interval $(\pi/2,\theta_0)$, with 
the initial condition $r(\theta_0)=x_0$. 
To solve the differential equation, it is sufficient to 
use, for example, second order Runge Kutta with $100$ steps (independent of $n$).
We denote by $\tilde{r}_n$ the approximation to $r(\pi/2)$ obtained
by this process. It follows immediately from $(\ref{670})$ that
$\tilde{r}_n$ is an approximation to $r_n$, the largest root of 
$\widetilde{P}_n^k$.\\\\
Step 3. Use Newton's method with $\tilde{r}_n$ as an initial guess
to find $r_n$ to high precision. The polynomials $\widetilde{P}_n^k$ and 
$\widetilde{P}_n^{k\prime}$ can be evaluated via Observation \ref{765}
and Observation \ref{767}.\\\\
Step 4. With initial condition 
\begin{equation}
x(\pi/2)=r_n,
\end{equation}
solve differential equation $\frac{dr}{d\theta}$ (see (\ref{eq1150}))
on the interval 
\begin{equation}
(-\pi/2,\pi/2)
\end{equation}
using, for example, second order Runge Kuta with $100$ steps.
We denote by $\tilde{r}_{n-1}$ the approximation to 
\begin{equation}
r(-\pi/2)
\end{equation}
obtained by this process.\\\\
Step 5. Use Newton's method, with initial guess $\tilde{r}_{n-1}$,
to find to high precision the second largest root, $r_{n-1}$. \\\\
Step 6. For $k=\{1,2,...,n-1\}$, 
repeat Step 4 on the interval
\begin{equation}
(-\pi/2-k\pi,-\pi/2-(k-1)\pi)
\end{equation}
with intial condition 
\begin{equation}
x(-\pi/2-(k-1)\pi)=r_{n-k+1}
\end{equation}
and repeat Step 5 on $\tilde{r}_{n-k}$.

\section{Appendix E: Notational Conventions for Zernike Polynomials}
  \label{not:1}
In two dimensions, the Zernike polynomials are usually indexed by their
azimuthal order and radial order. In this report, we use a
slightly different indexing scheme, which leads to simpler formulas and 
generalizes easily to higher dimensions (see Section~\ref{seczern} for our
definition of the Zernike polynomials $Z_{N,n}^\ell$ and the radial
polynomials $R_{N,n}$). However, for the sake of completeness, we describe
in this section the standard two dimensional indexing scheme, as well as
other widely used notational conventions. 

If $|m|$ denotes the azimuthal order and $n$ the radial order, then the
Zernike polynomials in standard two index notation (using asterisks to
differentiate them from the polynomials $Z_{N,n}^\ell$ and $R_{N,n}$) are
\boxit{
  \begin{align}
\accentset{*}{Z}_n^m(\rho,\theta) = \accentset{*}{R}_n^{|m|}(\rho) \cdot
  \left\{
  \begin{array}{cc}
  \sin(|m|\theta) & \text{if $m<0$}, \\
  \cos(|m|\theta) & \text{if $m>0$}, \\
  1 & \text{if $m=0$},
  \end{array}
  \right.
    \label{not:2}
  \end{align}
}
where
  \begin{align}
\accentset{*}{R}_n^{|m|}(\rho) =
  \sum_{k=0}^{\frac{n-|m|}{2}}
  \frac{ (-1)^k (n-k)! }{ k! \bigl(\frac{n+|m|}{2} - k\bigr)!
    \bigl(\frac{n-|m|}{2} - k\bigr)! } \rho^{n-2k},
    \label{not:3}
  \end{align}
for all $m=0,\pm 1,\pm 2, \ldots$ and $n=|m|,|m|+2,|m|+4,\ldots$ 
(see Figure~\ref{not:2}); they are normalized so that
  \begin{align}
\accentset{*}{R}_n^{|m|}(1) = 1,
  \end{align}
for all $m=0,\pm 1,\pm 2, \ldots$ and $n=|m|,|m|+2,|m|+4,\ldots$ .
We note that
  \begin{align}
\accentset{*}{R}_n^{|m|}(\rho) = R_{|m|,\frac{n-|m|}{2}}(\rho),
    \label{not:4}
  \end{align}
for all $m=0,\pm 1,\pm 2, \ldots$ and $n=|m|,|m|+2,|m|+4,\ldots$,
where $R$ is defined by~(\ref{360}) (see Figure~\ref{fig:rnn_notation}); 
equivalently, 
  \begin{align}
R_{N,n}(\rho) = \accentset{*}{R}^N_{N+2n}(\rho),
    \label{not:5}
  \end{align}
for all nonnegative integers $N$ and $n$.

\begin{remark}
    \label{not:6}
The quantity $n+|m|$ is sometimes referred to as the ``spacial frequency''
of the Zernike polynomial $\accentset{*}{Z}_n^m(\rho,\theta)$. It roughly
corresponds to the frequency of the polynomial on the disc, as opposed to
the azimuthal frequency $|m|$ or the order of the polynomial $n$.

\end{remark}

\subsection{Zernike Fringe Polynomials}

The Zernike Fringe Polynomials are the standard Zernike polynomials,
normalized to have $L^2$ norm equal to $\pi$ on the unit disc and ordered by
their spacial frequency $n+|m|$ (see Table~\ref{tab:fringe} and
Figure~\ref{fig:fringe_ord}). This ordering is sometimes called the ``Air
Force'' or ``University of Arizona'' ordering.

\subsection{ANSI Standard Zernike Polynomials}

The ANSI Standard Zernike polynomials, also referred to as OSA Standard
Zernike polynomials or Noll Zernike polynomials, are the standard Zernike
polynomials, normalized to have $L^2$ norm $\pi$ on the unit disc and
ordered by $n$ (the order of the polynomial on the disc; see
Table~\ref{tab:ansi} and Figure~\ref{fig:ansi_ord}).

\subsection{Wyant and Creath Notation}
\boxit{
In~\cite{wyant}, James Wyant and Katherine Creath observe that it is
sometimes convienient to factor the radial polynomial
$\accentset{*}{R}_{2n-|m|}^{|m|}$ into
  \begin{align}
\accentset{*}{R}_{2n-|m|}^{|m|}(\rho) = Q_n^{|m|}(\rho) \rho^{|m|},
  \end{align}
for all $m=0,\pm 1,\pm 2, \ldots$ and $n=|m|,|m|+1,|m|+2,\ldots$, where the
polynomial $Q_n^{|m|}$ is of order $2(n-|m|)$ (see Figure~\ref{fig:wyant}).
Equivalently, the factorization can be written as
  \begin{align}
\accentset{*}{R}_{n}^{|m|}(\rho) = Q_{\frac{n+|m|}{2}}^{|m|}(\rho) \rho^{|m|},
  \end{align}
for all $m=0,\pm 1,\pm 2, \ldots$ and $n=|m|,|m|+2,|m|+4,\ldots$ . 
}

%
%
\begin{figure}[htp]
\centering
\begin{tikzpicture}[scale=0.6]

\tikzmath{\mmax=6; \nmax=\mmax;}

\draw (-1,-\mmax) -- (-1,\mmax);
\draw (0,-\mmax-1) -- (\nmax,-\mmax-1);

\foreach \y in {-\mmax,...,\mmax}
  \draw (-0.1-1,\y) -- (0.1-1,\y);

\foreach \y in {-\mmax,...,-1}
  \node at (-1.67,\y) {$\y$};
\foreach \y in {0,...,\mmax}
  \node at (-1.45,\y) {$\y$};

\foreach \x in {0,...,\nmax}{
  \draw (\x,-\mmax-1-0.1) -- (\x,-\mmax-1+0.1);
  \node at (\x,-\mmax-1-0.5) {$\x$};
}

\node at (-2.2,0.1) {$m$};
\node at (2.7,-\mmax-2.1) {$n$};

\tikzmath{
  int \n, \m, \ii, \jj;
  \n=0;
  for \n in {0,...,\nmax}%
  {
    for \m in {-\n,...,\n}%
    {
      if mod(abs(\m)-\n,2) == 0 then {
        \ii=abs(\m);
        \jj=(\n+abs(\m))/2;
        { \draw (\n,\m) circle [radius=0.57];
          \node at (\n,\m) {\footnotesize $Q_\ii^{\jj}$}; };
      } else {
        { \draw (\n-0.1,\m) -- (\n+0.1,\m);
          \draw (\n,\m-0.1) -- (\n,\m+0.1); };
      };
    };
  };
}

\end{tikzpicture}

\caption{ The Wyant and Creath polynomials $Q^{|m|}_{(n+|m|)/2}$ }
\label{fig:wyant}
\end{figure}
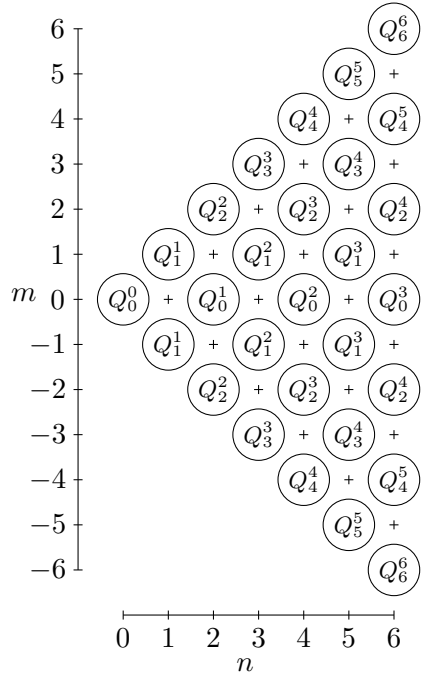

%
%
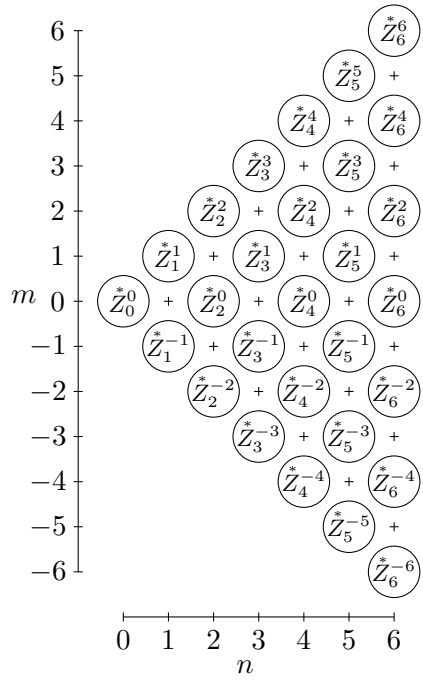
\begin{figure}[htp]
\centering
\begin{tikzpicture}[scale=0.6]

\tikzmath{\mmax=6; \nmax=\mmax;}

\draw (-1,-\mmax) -- (-1,\mmax);
\draw (0,-\mmax-1) -- (\nmax,-\mmax-1);

\foreach \y in {-\mmax,...,\mmax}
  \draw (-0.1-1,\y) -- (0.1-1,\y);

\foreach \y in {-\mmax,...,-1}
  \node at (-1.67,\y) {$\y$};
\foreach \y in {0,...,\mmax}
  \node at (-1.45,\y) {$\y$};

\foreach \x in {0,...,\nmax}{
  \draw (\x,-\mmax-1-0.1) -- (\x,-\mmax-1+0.1);
  \node at (\x,-\mmax-1-0.5) {$\x$};
}

\node at (-2.2,0.1) {$m$};
\node at (2.7,-\mmax-2.1) {$n$};

\tikzmath{
  int \n, \m;
  \n=0;
  for \n in {0,...,\nmax}%
  {
    for \m in {-\n,...,\n}%
    {
      if mod(abs(\m)-\n,2) == 0 then {
        { \draw (\n,\m) circle [radius=0.57];
          \node at (\n,\m) {\footnotesize $\protect
          \accentset{*}{Z}_\n^{\m}$}; };
      } else {
        { \draw (\n-0.1,\m) -- (\n+0.1,\m);
          \draw (\n,\m-0.1) -- (\n,\m+0.1); };
      };
    };
  };
}

\end{tikzpicture}

\caption{ { Zernike polynomials $\protect\accentset{*}{Z}^m_n$ in standard
double index notation} }
\label{fig:standard}
\end{figure}

%
%
\begin{figure}[htp]
\centering
\begin{tikzpicture}[scale=0.6]

\tikzmath{\mmax=6; \nmax=\mmax;}

\draw (-1,-\mmax) -- (-1,\mmax);
\draw (0,-\mmax-1) -- (\nmax,-\mmax-1);

\foreach \y in {-\mmax,...,\mmax}
  \draw (-0.1-1,\y) -- (0.1-1,\y);

\foreach \y in {-\mmax,...,-1}
  \node at (-1.67,\y) {$\y$};
\foreach \y in {0,...,\mmax}
  \node at (-1.45,\y) {$\y$};

\foreach \x in {0,...,\nmax}{
  \draw (\x,-\mmax-1-0.1) -- (\x,-\mmax-1+0.1);
  \node at (\x,-\mmax-1-0.5) {$\x$};
}

\node at (-2.2,0.1) {$m$};
\node at (2.7,-\mmax-2.1) {$n$};

\tikzmath{
  int \n, \m, \N, \nn;
  \n=0;
  for \n in {0,...,\nmax}%
  {
    for \m in {-\n,...,\n}%
    {
      if mod(abs(\m)-\n,2) == 0 then {
        \N=abs(\m);
        \nn=(\n-abs(\m))/2;
        { \draw (\n,\m) circle [radius=0.57];
          \node at (\n,\m) {\footnotesize $R_{\N,\nn}$}; };
      } else {
        { \draw (\n-0.1,\m) -- (\n+0.1,\m);
          \draw (\n,\m-0.1) -- (\n,\m+0.1); };
      };
    };
  };
}

\end{tikzpicture}

\caption{ The polynomials $R_{|m|,(n-|m|)/2}$ }
\label{fig:rnn_notation}
\end{figure}
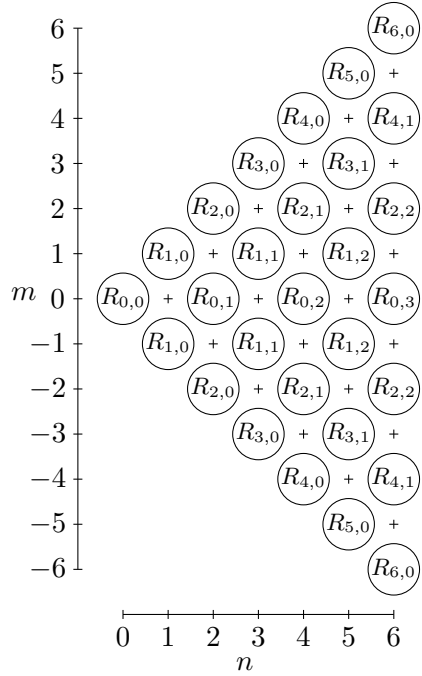

%
%
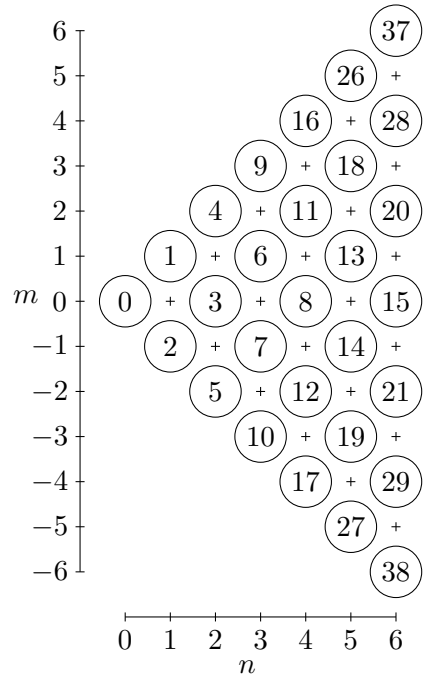
\begin{figure}[htp]
\centering
\begin{tikzpicture}[scale=0.6]

\tikzmath{\mmax=6; \nmax=\mmax;}

\draw (-1,-\mmax) -- (-1,\mmax);
\draw (0,-\mmax-1) -- (\nmax,-\mmax-1);

\foreach \y in {-\mmax,...,\mmax}
  \draw (-0.1-1,\y) -- (0.1-1,\y);

\foreach \y in {-\mmax,...,-1}
  \node at (-1.67,\y) {$\y$};
\foreach \y in {0,...,\mmax}
  \node at (-1.45,\y) {$\y$};

\foreach \x in {0,...,\nmax}{
  \draw (\x,-\mmax-1-0.1) -- (\x,-\mmax-1+0.1);
  \node at (\x,-\mmax-1-0.5) {$\x$};
}

\node at (-2.2,0.1) {$m$};
\node at (2.7,-\mmax-2.1) {$n$};

\tikzmath{
  int \n, \m, \l;
  \n=0;
  for \n in {0,...,\nmax}%
  {
    for \m in {-\n,...,\n}%
    {
      if mod(abs(\m)-\n,2) == 0 then {
        \l=-7;
        if \n == 0 && \m == 0 then {\l=0;};
        if \n == 1 && \m == 1 then {\l=1;};
        if \n == 1 && \m == -1 then {\l=2;};
        if \n == 2 && \m == 0 then {\l=3;};
        if \n == 2 && \m == 2 then {\l=4;};
        if \n == 2 && \m == -2 then {\l=5;};
        if \n == 3 && \m == 1 then {\l=6;};
        if \n == 3 && \m == -1 then {\l=7;};
        if \n == 4 && \m == 0 then {\l=8;};
        if \n == 3 && \m == 3 then {\l=9;};
        if \n == 3 && \m == -3 then {\l=10;};
        if \n == 4 && \m == 2 then {\l=11;};
        if \n == 4 && \m == -2 then {\l=12;};
        if \n == 5 && \m == 1 then {\l=13;};
        if \n == 5 && \m == -1 then {\l=14;};
        if \n == 6 && \m == 0 then {\l=15;};
        if \n == 4 && \m == 4 then {\l=16;};
        if \n == 4 && \m == -4 then {\l=17;};
        if \n == 5 && \m == 3 then {\l=18;};
        if \n == 5 && \m == -3 then {\l=19;};
        if \n == 6 && \m == 2 then {\l=20;};
        if \n == 6 && \m == -2 then {\l=21;};
        if \n == 5 && \m == 5 then {\l=26;};
        if \n == 5 && \m == -5 then {\l=27;};
        if \n == 6 && \m == 4 then {\l=28;};
        if \n == 6 && \m == -4 then {\l=29;};
        if \n == 6 && \m == 6 then {\l=37;};
        if \n == 6 && \m == -6 then {\l=38;};
        { \draw (\n,\m) circle [radius=0.57];
          \node at (\n,\m) {$\l$}; };
      } else {
        { \draw (\n-0.1,\m) -- (\n+0.1,\m);
          \draw (\n,\m-0.1) -- (\n,\m+0.1); };
      };
    };
  };
}

\end{tikzpicture}

\caption{ Fringe Zernike Polynomial Ordering }
  \label{fig:fringe_ord}
\end{figure}

%
%
\begin{figure}[htp]
\centering
\begin{tikzpicture}[scale=0.6]

\tikzmath{\mmax=6; \nmax=\mmax;}

\draw (-1,-\mmax) -- (-1,\mmax);
\draw (0,-\mmax-1) -- (\nmax,-\mmax-1);

\foreach \y in {-\mmax,...,\mmax}
  \draw (-0.1-1,\y) -- (0.1-1,\y);

\foreach \y in {-\mmax,...,-1}
  \node at (-1.67,\y) {$\y$};
\foreach \y in {0,...,\mmax}
  \node at (-1.45,\y) {$\y$};

\foreach \x in {0,...,\nmax}{
  \draw (\x,-\mmax-1-0.1) -- (\x,-\mmax-1+0.1);
  \node at (\x,-\mmax-1-0.5) {$\x$};
}

\node at (-2.2,0.1) {$m$};
\node at (2.7,-\mmax-2.1) {$n$};

\tikzmath{
  int \n, \m, \l;
  \n=0;
  for \n in {0,...,\nmax}%
  {
    for \m in {-\n,...,\n}%
    {
      if mod(abs(\m)-\n,2) == 0 then {
        \l=( (\n*(\n+2))+ \m )/2;
        { \draw (\n,\m) circle [radius=0.57];
          \node at (\n,\m) {$\l$}; };
      } else {
        { \draw (\n-0.1,\m) -- (\n+0.1,\m);
          \draw (\n,\m-0.1) -- (\n,\m+0.1); };
      };
    };
  };
}

\end{tikzpicture}

\caption{ ANSI Standard Zernike Polynomial Ordering }
  \label{fig:ansi_ord}
\end{figure}
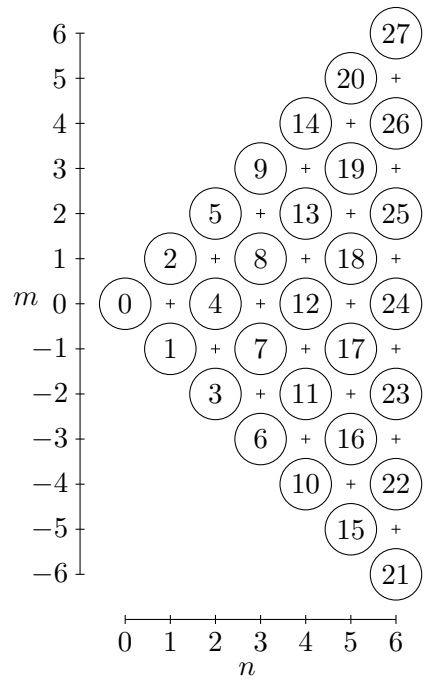

\begin{table}[htp]

  \centering
\begin{tabu}{llllll}
\hline
\multirow{2}{*}{index} & 
\multirow{2}{*}{$n$} & 
\multirow{2}{*}{$m$} & 
\multirow{2}{*}{ \parbox{5em}{ spacial \\ frequency } }& 
\multirow{2}{*}{polynomial$^\diamond$} & 
\multirow{2}{*}{name$^\dagger$} \\ 
 & & & & & \\
\hline
0 & 0 & 0 & 0 & $1$ & 
  piston \\
\tabucline[mydashed]{-}
1 & 1 & 1 & 2 & $\overbar R_{1,0}(\rho) \cos(\theta)$ & 
  tilt in $x$-direction \\
2 & 1 & -1 & 2 & $\overbar R_{1,0}(\rho) \sin(\theta)$ &
  tilt in $y$-direction \\
3 & 2 & 0 & 2 & $\overbar R_{0,1}(\rho)$ & 
  defocus (power) \\
\tabucline[mydashed]{-}
4 & 2 & 2 & 4 & $\overbar R_{2,0}(\rho) \cos(2\theta)$ &
  defocus + astigmatism 45$^\circ$/135$^\circ$ \\
5 & 2 & -2 & 4 & $\overbar R_{2,0}(\rho) \sin(2\theta)$ & 
  defocus + astigmatism 90$^\circ$/180$^\circ$ \\
6 & 3 & 1 & 4 & $\overbar R_{1,1}(\rho) \cos(\theta)$ &
  tilt + horiz. coma along $x$-axis \\
7 & 3 & -1 & 4 & $\overbar R_{1,1}(\rho) \sin(\theta)$ & 
  tilt + vert. coma along $y$-axis \\
8 & 4 & 0 & 4 & $\overbar R_{0,2}(\rho)$ &
  defocus + spherical aberration \\
\tabucline[mydashed]{-}
9  & 3 & 3 & 6 & $\overbar R_{3,0}(\rho) \cos(3\theta)$ &
  trefoil in $x$-direction \\
10 & 3 & -3 & 6 & $\overbar R_{3,0}(\rho) \sin(3\theta)$ &
  trefoil in $y$-direction \\
11 & 4 & 2 & 6 & $\overbar R_{2,1}(\rho) \cos(2\theta)$ &  \\
12 & 4 & -2 & 6 & $\overbar R_{2,1}(\rho) \sin(2\theta)$ &  \\
13 & 5 & 1 & 6 & $\overbar R_{1,2}(\rho) \cos(\theta)$ &  \\
14 & 5 & -1 & 6 & $\overbar R_{1,2}(\rho) \sin(\theta)$ &  \\
15 & 6 & 0 & 6 & $\overbar R_{0,3}(\rho)$ & \\
\tabucline[mydashed]{-}
16  & 4 & 4 & 8 & $\overbar R_{4,0}(\rho) \cos(4\theta)$ & \\
17 & 4 & -4 & 8 & $\overbar R_{4,0}(\rho) \sin(4\theta)$ & \\
18 & 5 & 3 & 8 & $\overbar R_{3,1}(\rho) \cos(3\theta)$ &  \\
19 & 5 & -3 & 8 & $\overbar R_{3,1}(\rho) \sin(3\theta)$ &  \\
20 & 6 & 2 & 8 & $\overbar R_{2,2}(\rho) \cos(2\theta)$ &  \\
21 & 6 & -2 & 8 & $\overbar R_{2,2}(\rho) \sin(2\theta)$ &  \\
22 & 7 & 1 & 8 & $\overbar R_{1,3}(\rho) \cos(\theta)$ & \\
23 & 7 & -1 & 8 & $\overbar R_{1,3}(\rho) \sin(\theta)$ &  \\
24 & 8 & 0 & 8 & $\overbar R_{0,4}(\rho)$ & \\
\hline

\end{tabu}

\caption{ {\bf Zernike Fringe Polynomials.} This table lists the first 24
Zernike polynomials in what is sometimes called the ``Fringe'', ``Air
Force'', or ``University of Arizona'' ordering (see, for
example~\cite{zemax}, p.~198, or~\cite{wyant}, p.~31).  They are often also
denoted by $Z_\ell(\rho,\theta)$, where $\ell$ is the index. \\
$\diamond$~See formulas~(\ref{360}) and~(\ref{1.65}). \\
$\dagger$~See, for example,~\cite{jagerman}. More complex aberrations
are usually not named. }
  \label{tab:fringe}
\end{table}

\begin{table}[htp]

  \centering
\begin{tabu}{llllll}
\hline
\multirow{2}{*}{index} & 
\multirow{2}{*}{$n$} & 
\multirow{2}{*}{$m$} & 
\multirow{2}{*}{ \parbox{5em}{spacial \\ frequency} }& 
\multirow{2}{*}{polynomial$^\diamond$} & 
\multirow{2}{*}{name$^\dagger$} \\ 
 & & & & & \\
\hline
0 & 0 & 0 & 0 & $1$ & 
  piston \\
\tabucline[mydashed]{-}
1 & 1 & -1 & 2 & $\overbar R_{1,0}(\rho) \sin(\theta)$   & 
  tilt in $y$-direction \\
2 & 1 & 1 & 2 & $\overbar R_{1,0}(\rho) \cos(\theta)$    & 
  tilt in $x$-direction \\
\tabucline[mydashed]{-}
3 & 2 & -2 & 4 & $\overbar R_{2,0}(\rho) \sin(2\theta)$  & 
  defocus + astigmatism 90$^\circ$/180$^\circ$ \\
4 & 2 & 0 & 2 & $\overbar R_{0,1}(\rho)$                 &
  defocus (power)  \\
5 & 2 & 2 & 4 & $\overbar R_{2,0}(\rho) \cos(2\theta)$   &
  defocus + astigmatism 45$^\circ$/135$^\circ$ \\
\tabucline[mydashed]{-}
6 & 3 & -3 & 6 & $\overbar R_{3,0}(\rho) \sin(3\theta)$  &
  trefoil in $y$-direction \\
7 & 3 & -1 & 4 & $\overbar R_{1,1}(\rho) \sin(\theta)$   &
  tilt + vert. coma along $y$-axis \\
8 & 3 & 1 & 4 & $\overbar R_{1,1}(\rho)  \cos(\theta)$   & 
  tilt + horiz. coma along $x$-axis \\
9  & 3 & 3 & 6 & $\overbar R_{3,0}(\rho) \cos(3\theta)$  &
  trefoil in $x$-direction \\
\tabucline[mydashed]{-}
10 & 4 & -4 & 8 & $\overbar R_{4,0}(\rho) \sin(4\theta)$ & \\
11 & 4 & -2 & 6 & $\overbar R_{2,1}(\rho) \sin(2\theta)$ & \\
12 & 4 &  0 & 4 & $\overbar R_{0,2}(\rho)$               & \\
13 & 4 &  2 & 6 & $\overbar R_{2,1}(\rho) \cos(2\theta)$ & \\
14 & 4 &  4 & 8 & $\overbar R_{4,0}(\rho) \cos(4\theta)$ & \\
\tabucline[mydashed]{-}
15 & 5 & -5 & 10 & $\overbar R_{5,0}(\rho) \sin(5\theta)$ & \\
16 & 5 & -3 & 8 & $\overbar R_{3,1}(\rho) \sin(3\theta)$ & \\
17 & 5 & -1 & 6 & $\overbar R_{1,2}(\rho) \sin(\theta)$ & \\
18 & 5 &  1 & 6 & $\overbar R_{1,2}(\rho) \cos(\theta)$ & \\
19 & 5 &  3 & 8 & $\overbar R_{3,1}(\rho) \cos(3\theta)$ & \\
20 & 5 &  5 & 10 & $\overbar R_{5,0}(\rho) \cos(5\theta)$ & \\
\tabucline[mydashed]{-}
21 & 6 & -6 & 12 & $\overbar R_{6,0}(\rho) \sin(6\theta)$ & \\
22 & 6 & -4 & 10 & $\overbar R_{4,1}(\rho) \sin(4\theta)$ & \\
23 & 6 & -2 & 8 & $\overbar R_{2,2}(\rho) \sin(2\theta)$ & \\
24 & 6 &  0 & 6 & $\overbar R_{0,3}(\rho)$ & \\
\hline

\end{tabu}

\caption{ {\bf ANSI Standard Zernike Polynomials.} This table lists the
first 24 Zernike polynomials in the ANSI Standard ordering, also referred to
as the ``OSA Standard'' or ``Noll'' ordering (see, for example~\cite{zemax},
p.~201, or~\cite{ansi}).  They are often also denoted by
$Z_\ell(\rho,\theta)$, where $\ell$ is the index. \\ 
$\diamond$~See formulas~(\ref{360}) and~(\ref{1.65}). \\
$\dagger$~See, for example,~\cite{jagerman}. More complex aberrations are
usually not named. }
  \label{tab:ansi}
\end{table}

\clearpage


\begin{thebibliography}{99}

\bibitem{abramowitz}
Abramowitz, Milton, and Irene A. Stegun, eds. {\it Handbook of Mathematical
Functions With Formulas, Graphs, and Mathematical Tables.} Washington: 
U.S. Govt. Print. Off., 1964.

\bibitem{alpert}
Alpert, Bradley K. and Vladimir Rokhlin. ``A fast algorithm for the
evaluation of Legendre expansions.'' {\it SIAM J. Sci. Stat. Comput.} 12.1
(1991): 158--179.

\bibitem{batemanII}
Bateman, Harry. {\it Higher Transcendental Functions, Vol. 2}. 
Malabar, Florida: Robert E. Krieger Publishing Company, Inc., 1985.

\bibitem{born}
Born, Max, and Emil Wolf. {\it Principles of Optics.} 6th ed. (with
corrections). New York: Pergamon Press Inc., 1980.

\bibitem{boyd}
Boyd, John P. and Fu Yu. 
``Comparing seven spectral methods for interpolation and for solving the 
Poisson equation in a disk: Zernike Polynomials, Logan-Shepp ridge
polynomials, Chebyshev-Fourier Series, cylindrical Robert functions,
Bessel-Fourier expansions, square-to-disk conformal mapping and 
radial basis functions''{\it J. Comput. Phys.} 230.4 (2011): 1408--1438. 

\bibitem{chong}
Chong, Chee-Way, P. Raveendran, and R. Mukundan. ``A comparative analysis
of algorithms for fast computation of Zernike moments.'' {Pattern 
Recognition} 36 (2003): 731--742.

\bibitem{20}
Glaser, Andreas, Xiangtao Liu, and Vladimir Rokhlin. 
``A Fast Algorithm for the Calculation of the Roots of Special
Functions.''{\it SIAM J. Sci. Comput.} 29.4 (2007): 1420--1438. 

\bibitem{jagerman}
Jagerman, Louis S. {\it Ophthalmologists, meet Zernike and Fourier!}.
Victoria, BC, Canada: Trafford Publishing, 2007.

\bibitem{kintner}
Kintner, Eric C. ``On the mathematical properties of the Zernike
polynomials.'' {\it Optica Acta} 23.8 (1976): 679--680.

\bibitem{ansi}
{\it Ophthalmics - Corneal Topography Systems - Standard Terminology,
Requirements}.  ANSI Standard Z80.23-2008 (R2013).

\bibitem{zemax}
Radiant ZEMAX LLC. {\it ZEMAX: Optical Design Program User's Manual.}
Redmond, WA: Author, 2011.

\bibitem{slepian}
Slepian, David. ``Prolate Spheroidal Wave Functions, Fourier
Analysis and Uncertainty - IV: Extensions to Many Dimensions; 
Generalized Prolate Spheroidal Functions.'' {\it Bell Labs
Technical Journal} 43.6 (1964): 3009-3057.

\bibitem{stoer}
Stoer, Josef and Roland Bulirsch. {\it Introduction to Numerical Analysis},
2nd ed., Springer-Verlag, 1992.

\bibitem{wyant}
Wyant, James C., and Katherine Creath. ``Basic wavefront aberration theory
for optical metrology.'' Applied optics and optical engineering II (1992):
28--39.

\end{thebibliography}
\end{document}